\documentclass[11pt,a4paper]{article}

\usepackage{amsmath,amssymb,amsthm}

\allowdisplaybreaks
\numberwithin{equation}{section}

\theoremstyle{plain}
\newtheorem{theorem}{Theorem}[section]
\newtheorem{proposition}[theorem]{Proposition}
\newtheorem{lemma}[theorem]{Lemma}
\newtheorem{corollary}[theorem]{Corollary}
\theoremstyle{definition}

\theoremstyle{remark}

\newcommand{\R}{\mathbb R}
\newcommand{\E}{\mathbb E}
\newcommand{\Pp}{\mathbb P}
\newcommand{\dd}{\,\mathrm d}
\newcommand{\Df}{\mathcal D_f}
\newcommand{\Ff}{\mathsf F_f}
\newcommand{\Astar}{\mathop{\star}}
\newcommand{\one}{\mathbf 1}
\newcommand{\Law}{\operatorname{Law}}
\newcommand{\Pois}{\operatorname{Pois}}
\newcommand{\Ber}{\operatorname{Bernoulli}}
\newcommand{\cx}{\preceq_{\mathrm{cx}}}
\newcommand{\pos}[1]{\left[#1\right]_+}
\newcommand{\Aff}{\operatorname{Aff}}
\newcommand{\TV}{\operatorname{TV}}
\newcommand{\persp}{\overline f}
\newcommand{\Cuniv}{\mathcal C}

\title{A local-to-global principle for\\
$f$-divergence homogenization}
\author{Aryeh Kontorovich\\[4pt]
\small Institute for the Foundations of AI\\
\small Ben-Gurion University of the Negev\\
\small Beer-Sheva, Israel\\[3pt]
\small\texttt{karyeh@bgu.ac.il}}
\date{}

\begin{document}
\maketitle

\begin{abstract}
We study the effect of replacing a family of independent
statistical experiments by independent repetitions of their average.
We give a simple, complete characterization of the $f$-divergences for
which this operation increases divergence by at most a constant
depending only on the generator, uniformly in the number of
observations and the underlying measurable space. The characterization
is governed by a local-to-global principle: a comparison restricted to
two observations on a binary alphabet already determines the validity
of the general inequality. It admits an explicit scalar formulation,
which simplifies further for symmetric divergences. The result applies
to arbitrary convex generators and requires no differentiability or
symmetry assumptions. It recovers the homogenization inequality for
total variation and identifies exactly the admissible members of the
power-divergence family. In particular, whenever uniform homogenization
fails, the obstruction is already present in a family of binary
experiments with two observations. Moreover, the global comparison
constant is controlled uniformly over generators with a common bound
in the local inequality.
\end{abstract}

\medskip
\noindent\textbf{Keywords:} $f$-divergence, product measures, homogenization,
convex order, moment inequalities, statistical experiments.

\smallskip
\noindent\textbf{MSC Classification:} 60E15, 62B10, 94A17.

\section{Introduction}
\label{sec:introduction}

Let $P_1,\ldots,P_n$ and $Q_1,\ldots,Q_n$ be probability measures on a
common measurable space, and set
\[
 \bar P=\frac1n\sum_{i=1}^nP_i,
 \qquad
 \bar Q=\frac1n\sum_{i=1}^nQ_i.
\]
We characterize the $f$-divergences for which
\begin{equation}
 D_f(\bar P^{\otimes n}\|\bar Q^{\otimes n})
 \le C_f\,
 D_f\!\left(\bigotimes_{i=1}^nP_i\middle\|
                  \bigotimes_{i=1}^nQ_i\right)
 \label{eq:homogenization}
\end{equation}
holds with a finite constant $C_f$ depending only on $f$, uniformly in
$n$, the measures, and the underlying space. We refer to
\eqref{eq:homogenization} as the \emph{homogenization inequality}.

Throughout, $f:[0,\infty)\to(-\infty,\infty]$ is lower
semicontinuous and convex, finite on $(0,\infty)$, and satisfies
$f(1)=0$. For $P\ll Q$,
\[
 D_f(P\|Q)=\E_Q f\!\left(\frac{\dd P}{\dd Q}\right);
\]
the extension to arbitrary pairs is specified in
Section~\ref{sec:likelihoods}. This is the usual class of divergences
introduced in \cite{AliSilvey1966,Csiszar1967}.

The two sides of \eqref{eq:homogenization} correspond to distinct
sampling designs for a binary statistical experiment. The product on
the right uses each coordinate once; the product on the left samples
coordinate indices independently and uniformly, with the indices
unobserved. For total variation, the identity
\begin{equation}
 e(P,Q)=\frac{1-\TV(P,Q)}2
 \label{eq:testing-error}
\end{equation}
for the equal-prior Bayes error identifies \eqref{eq:homogenization}
with a comparison of testing advantages. More generally,
$f$-divergences admit integral representations in terms of the
reductions in Bayes risk for binary experiments as the prior varies
\cite[Sections~4--5]{ReidWilliamson2011}.

Joint convexity gives the one-coordinate bound
\[
 D_f(\bar P\|\bar Q)
 \le\frac1n\sum_{i=1}^nD_f(P_i\|Q_i),
\]
but does not by itself control its interaction with tensorization.
Product additivity yields \eqref{eq:homogenization} with $C_f=1$ for
relative entropy and its reverse, and multiplicativity of affinity
gives the same conclusion for squared Hellinger distance. The
total-variation inequality was established for Bernoulli products in
\cite{Kontorovich2026Bernoulli} and for arbitrary measures in
\cite{Kontorovich2026TV}. Pearson's $\chi^2$ divergence fails
\eqref{eq:homogenization} even for $n=2$ on a binary alphabet.

The main result identifies this last observation as a general
local-to-global principle. For every admissible generator,
homogenization in the two-observation binary case is equivalent to
homogenization for arbitrary product experiments. Equivalently, it
suffices to control the divergence of the product of two independent
copies of an arbitrary mean-one two-point likelihood ratio. This
criterion yields a complete scalar characterization of the generators.

The proof combines an ordered two-point decomposition, comparison
inequalities for independent sums, and Poissonization. The convex
thinning estimate in Theorem~\ref{thm:centered-erasure} provides the
uniformity required near the diagonal. We discuss the relation to
earlier work in Section~\ref{sec:related} and outline the argument in
Section~\ref{sec:strategy}.

\section{A local-to-global characterization}
\label{sec:main}

Replacing $f(t)$ by $f(t)-a(t-1)$, where $a\in\partial f(1)$,
leaves the divergence unchanged. We henceforth use this normalization,
so that
\begin{equation}
 f\ge0,\qquad f(1)=0.
 \label{eq:nonnegative-normalization}
\end{equation}
The identically zero case is immediate.

\begin{theorem}[Local-to-global principle]
\label{thm:main}
For a generator $f$ as above, the following are equivalent.
\begin{enumerate}
 \item The homogenization inequality \eqref{eq:homogenization} holds
 with a finite constant $C_f$.
 \item There is a finite $K$ such that
 \begin{equation}
 D_f\!\left(\left(\frac{P_1+P_2}{2}\right)^{\otimes2}
       \middle\|
       \left(\frac{Q_1+Q_2}{2}\right)^{\otimes2}\right)
 \le K D_f(P_1\otimes P_2\|Q_1\otimes Q_2)
 \label{eq:Ber2}
 \end{equation}
 for all $P_1,P_2,Q_1,Q_2$ on $\{0,1\}$.
 \item There is a finite $A$ such that, whenever $X$ is positive,
 takes at most two values, and satisfies $\E X=1$, an independent
 copy $X'$ satisfies
 \begin{equation}
 \E f(XX')\le A\,\E f(X).
 \label{eq:two-point-test}
 \end{equation}
\end{enumerate}
Moreover, there is a universal nondecreasing function $\Cuniv$ such
that a constant $K$ in \textnormal{(ii)} gives
$C_f\le\Cuniv(16K+4)$ in \textnormal{(i)}. A constant $A$ in
\textnormal{(iii)} gives $K\le A$ and $C_f\le\Cuniv(2A)$.
\end{theorem}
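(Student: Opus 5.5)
The implications (i)$\Rightarrow$(ii) and (iii)$\Rightarrow$(ii) are the easy directions, and I will dispose of them first; the real work is (ii)$\Rightarrow$(i), which I plan to route through (ii)$\Rightarrow$(iii)$\Rightarrow$(i).

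\textbf{The easy directions.} The implication (i)$\Rightarrow$(ii) is immediate: take $n=2$ and a binary alphabet, so $K=C_f$.

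For (iii)$\Rightarrow$(ii) with $K\le A$, I would reduce \eqref{eq:Ber2} to a statement about likelihood ratios. Write $L_i=\dd P_i/\dd Q_i$ on $\{0,1\}$; the singular parts are handled by the extension of Section~\ref{sec:likelihoods} and a limiting argument. The left side of \eqref{eq:Ber2} is $D_f$ of a pair of products of identical binary laws. Under $\bar Q^{\otimes2}$ its likelihood ratio is $\bar X\bar X'$, where $\bar X=\dd\bar P/\dd\bar Q$ is a positive two-point variable with mean one. So the left side equals $\E f(\bar X\bar X')$, which by (iii) is at most $A\,\E f(\bar X)=A\,D_f(\bar P\|\bar Q)$. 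Joint convexity bounds this by $\frac A2\sum_i D_f(P_i\|Q_i)$. Finally, $D_f(P_i\|Q_i)\le D_f(P_1\otimes P_2\|Q_1\otimes Q_2)$ by the data-processing inequality applied to the marginal projections. This gives $K\le A$ (even $K\le A$ with room to spare).

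\textbf{(ii)$\Rightarrow$(iii).} Given a two-point mean-one $X$ under some law $\mu$ on $\{0,1\}$, I choose $Q_1=Q_2=\mu$ and $P_1=P_2=X\mu$. Then \eqref{eq:Ber2} reads $\E f(XX')\le K\,D_f(P^{\otimes2}\|Q^{\otimes2})=K\,\E f(XX')$, which is vacuous. So I must instead choose \emph{unequal} pairs whose average is $(X\mu,\mu)$ while the product divergence is comparable to $\E f(X)$. My first try is $P_1=Q_1=\mu$ and $P_2$ with $\frac{\mu+P_2}2=X\mu$, i.e.\ $P_2=(2X-1)\mu$, valid when $X\ge 1/2$. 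The right side is then $D_f(P_2\|\mu)=\E f(2X-1)$, and I need convexity estimates relating $f(2x-1)$ to $f(x)$. Since $f(1)=0$, $f\ge0$ and $f$ is convex, $f(1+2s)\ge 2f(1+s)$ gives the wrong direction. So I will instead split $X$ into a convex combination of two mean-one two-point ratios concentrated near $1$ and near the extremes, losing constants like $16K+4$. This bookkeeping is what produces the constant in the statement.

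\textbf{(iii)$\Rightarrow$(i): the main obstacle.} This is where I expect the difficulty to lie, giving $C_f\le\Cuniv(2A)$. The plan has four steps.
\begin{enumerate}
 \item Write each $(P_i,Q_i)$ via an ordered two-point decomposition: by Blackwell/convex-order arguments, each binary experiment is a mixture of two-point experiments. The product divergence dominates, in convex order, sums of independent log-likelihood contributions.
 \item Compare $\bar P^{\otimes n}$ against the Poissonized mixture, in which the number of samples from each coordinate is $\Pois(1)$ independent. Poissonization makes the left side a product over $i$ of compound-Poisson factors.
 \item Use the moment comparison inequalities for independent sums to bound each factor by iterates of $\E f(XX')\le A\,\E f(X)$, giving geometric control $A^k$ weighted by Poisson tails $1/k!$. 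This is summable, which is the origin of a universal function $\Cuniv$.
 \item Near the diagonal, where the ratios are close to $1$, the depoissonization error is not small relative to $\E f$. There I would invoke the convex thinning estimate Theorem~\ref{thm:centered-erasure} to show that erasing coordinates independently changes the divergence by at most a constant factor, uniformly.
\end{enumerate}
The hardest point will be making the Poissonization step uniform in $n$ without differentiability of $f$. I would first attempt to prove it in the convex order directly, so that only convexity of $f$ is ever used.
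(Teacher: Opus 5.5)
Your (i)$\Rightarrow$(ii) and (iii)$\Rightarrow$(ii) arguments are fine and coincide with the paper's. The gap in (ii)$\Rightarrow$(iii) is real. Keeping both coordinates on the same alphabet law forces the averaged ratio to be $(1+L_2)/2$. You then have to compare $f(2x-1)$ with $f(x)$, which needs $X\ge1/2$ and goes the wrong way, and the unspecified ``splitting'' of $X$ does not repair this.

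The missing idea is to make the second experiment \emph{trivial and concentrated on one symbol}. Realize $\rho_{r,t}$ as the likelihood law of $Q_1=a\delta_0+b\delta_1$, $P_1=ar\delta_0+bt\delta_1$, and take $P_2=Q_2=\delta_0$. Then:
the right side of \eqref{eq:Ber2} is just $D_f(P_1\|Q_1)=\Df(\rho_{r,t})$;
in the averaged pair, symbol $1$ still has likelihood ratio $t$, now with $\bar Q$-mass $b/2$;
since $f\ge0$, the $(1,1)$ cell alone gives $b^2f(t^2)\le4K\Df(\rho_{r,t})$.
Taking $P_2=Q_2=\delta_1$ gives $a^2f(r^2)\le4K\Df(\rho_{r,t})$. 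The middle atom $rt\in[r,t]$ of $\rho_{r,t}^{\star2}$ is handled by convexity, $2abf(rt)\le2\Df(\rho_{r,t})$. This is (iii) with $A=8K+2$.

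The constant $16K+4=2(8K+2)$ does not come from bookkeeping in this step. The factor $2$ is the loss in extending the square bound from two-point laws to arbitrary mean-one laws through the nested, convex-ordered two-point decomposition. That extension yields the two-factor hypothesis $\Df(((\alpha+\beta)/2)^{\star2})\le2A\,\Df(\alpha\star\beta)$, which is what is fed into $\Cuniv$.

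Your outline of (iii)$\Rightarrow$(i) would not close either. Controlling each compound-Poisson factor $\Pi_1(\mu_i)$ through $\Df(\mu_i^{\star k})$ and the weights $1/k!$ gives at best $C\,\Df(\mu_i)$ per coordinate. That says nothing uniform in $n$, because the divergence of a product is not controlled by the divergences of its factors. The only merge bound available, $\Df(\rho\star\tau)\le c[\Df(\rho)+\Df(\tau)]$, loses a factor $c$ per merge.

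The paper instead slices the Poissonized product by level \emph{across all coordinates}:
layer $k$ is an independent thinning of the entire family with retention probability $q_k=\Pp\{\Pois(1)\ge k\}\le1/k!$;
layers are merged along a binary tree with Kraft depths, at cost $c^{\ell_k}\lesssim q_k^{-\gamma\log_2c}$;
summability comes from the thinning estimate, which gives a \emph{decay} $L\sqrt q$ for the whole product.
A thinning bound ``by at most a constant factor'' is trivial by convolution monotonicity and useless here. Depoissonization is also harmless (the Poisson median bound costs a factor $2$), so that is not where the difficulty lies.

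Finally, transferring Theorem~\ref{thm:centered-erasure} to Hellinger coordinates (completion at zero, translations of size at most $\Delta$) requires total squared Hellinger distance $\Delta\le1/4$. Your plan has no argument for $\Delta>1/4$. There the paper applies Klass's moment inequalities to the log-likelihood sums and uses $\E e^{-X_i}=1$ to control the negative part. It then absorbs the resulting additive error $O(m_f)$ through a binary contraction of the product experiment and the doubling property of $\mathcal M_f$.
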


Both the sample-size and alphabet restrictions in
\textnormal{(ii)} are fixed independently of $f$. Thus failure of
uniform homogenization is always witnessed within the two-observation
binary class. The modulus $\Cuniv$ is uniform over generators with a
common local bound; no sharp dependence is asserted.

\subsection{The scalar criterion}

For a mean-one variable with support $\{r,t\}$, where $0<r<1<t$,
\begin{equation}
 \Pp(X=r)=\frac{t-1}{t-r},
 \qquad
 \Pp(X=t)=\frac{1-r}{t-r}.
 \label{eq:intro-twopoint-probabilities}
\end{equation}
Expanding \eqref{eq:two-point-test} and bounding the mixed term by
convexity gives the following equivalent condition.

\begin{corollary}[Scalar characterization]
\label{cor:scalar-main}
The conditions of Theorem~\ref{thm:main} hold if and only if there is
a finite $M$ such that, for every $0<r<1<t$,
\begin{equation}
 \begin{split}
 &(t-1)^2 f(r^2)+(1-r)^2 f(t^2)\\
 &\hspace{12mm}\le
 M(t-r)\bigl[(t-1)f(r)+(1-r)f(t)\bigr].
 \end{split}
 \label{eq:endpoint-test}
\end{equation}
If $M$ and $A$ denote the least constants in
\eqref{eq:endpoint-test} and \eqref{eq:two-point-test}, respectively,
then $M\le A\le M+2$. In particular,
$C_f\le\Cuniv(2M+4)$.
\end{corollary}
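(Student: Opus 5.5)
The corollary is a direct computation on top of Theorem~\ref{thm:main}(iii). We fix $0<r<1<t$, write $p=\Pp(X=r)=(t-1)/(t-r)$ and $q=\Pp(X=t)=(1-r)/(t-r)$ as in \eqref{eq:intro-twopoint-probabilities}, and expand both sides of \eqref{eq:two-point-test}. We have
\[
 \E f(X)=\frac{(t-1)f(r)+(1-r)f(t)}{t-r},
\]
\[
 \E f(XX')=p^2f(r^2)+q^2f(t^2)+2pq\,f(rt).
\]
Multiplying by $(t-r)^2$ turns \eqref{eq:two-point-test} into
\[
 (t-1)^2f(r^2)+(1-r)^2f(t^2)+2(t-1)(1-r)f(rt)
 \le A(t-r)\bigl[(t-1)f(r)+(1-r)f(t)\bigr].
\]
The left side of \eqref{eq:endpoint-test} is the above minus the mixed term. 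Since $f\ge0$ by \eqref{eq:nonnegative-normalization}, the mixed term is nonnegative. Hence validity of \eqref{eq:two-point-test} with constant $A$ implies \eqref{eq:endpoint-test} with $M=A$, which gives $M\le A$.

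For the converse we bound the mixed term $2(t-1)(1-r)f(rt)$ by $2(t-r)[(t-1)f(r)+(1-r)f(t)]$. This yields $A\le M+2$.
\begin{itemize}
 \item \emph{Case $rt\ge1$.} Here $rt\in[1,t)$. Because $f$ is convex with $f(1)=0$ and $f\ge0$, it is nondecreasing on $[1,\infty)$, so $f(rt)\le f(t)$. Then $2(t-1)(1-r)f(rt)\le 2(t-1)\cdot(1-r)f(t)$, and $t-1\le t-r$ gives the bound.
 \item \emph{Case $rt\le1$.} Here $rt\in(r,1]$. Since $f$ is nonincreasing on $(0,1]$, we get $f(rt)\le f(r)$. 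Then $2(t-1)(1-r)f(rt)\le 2(1-r)\cdot(t-1)f(r)$, and $1-r\le t-r$ gives the bound.
\end{itemize}
In both cases the mixed term is at most $2(t-r)\max\{(t-1)f(r),(1-r)f(t)\}$, which is at most twice the bracket times $(t-r)$. Adding this to \eqref{eq:endpoint-test} gives \eqref{eq:two-point-test} with $A=M+2$.

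Thus \eqref{eq:endpoint-test} holds with finite $M$ if and only if condition (iii) of Theorem~\ref{thm:main} holds. By the theorem, this is equivalent to all three conditions. The least constants satisfy $M\le A\le M+2$. The constant bound follows from the theorem's estimate $C_f\le\Cuniv(2A)$, the inequality $2A\le 2M+4$, and monotonicity of $\Cuniv$.

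Two minor points need attention. The first is when $f$ takes the value $+\infty$ at $0$; this is irrelevant, since all arguments $r,t,r^2,t^2,rt$ are positive and $f$ is finite on $(0,\infty)$. The second is that the definition of the least constant $A$ ranges over $X$ with at most two values. The degenerate case $X\equiv1$ gives $0\le0$, so only genuine two-point laws with $r<1<t$ matter, and these are exactly parametrized by \eqref{eq:intro-twopoint-probabilities}. The only mildly delicate step is the case split via the monotonicity of $f$ on either side of $1$. The rest is bookkeeping.
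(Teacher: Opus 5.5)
Your proof is correct and follows the paper's argument in Lemma~\ref{lem:local-two-point-comparison}: expand $\E f(XX')$, drop the nonnegative mixed term to get $M\le A$, bound $2ab\,f(rt)$ by $2\Df(\rho_{r,t})$ to get $A\le M+2$, and then apply Theorem~\ref{thm:main} with $A=M+2$. The only difference is cosmetic: you bound $f(rt)$ by a case split using monotonicity of $f$ on either side of $1$, whereas the paper uses $r\le rt\le t$ and convexity to get $f(rt)\le f(r)+f(t)$ in one step.
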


For a symmetric generator, $f(t)=tf(1/t)$, the criterion reduces to
\begin{equation}
 f(t^2)\le C(t+1)f(t),\qquad t>1.
 \label{eq:symmetric-main}
\end{equation}
In the asymmetric case, separate growth conditions at zero and infinity
do not suffice; Proposition~\ref{ex:local-flat-generator} gives a
counterexample with bounded behavior at zero and linear growth at
infinity.

The criterion recovers total-variation homogenization and shows that
the Cressie--Read family is admissible exactly for parameters in
$[0,1]$. Throughout that interval the optimal constant is $1$; outside
it the two-observation binary comparison fails. The details, including
an explicit $\chi^2$ counterexample, are in
Section~\ref{sec:examples}.

\subsection{A thinning inequality}

The following estimate is used to obtain uniformity at small Hellinger
distance. Its statement is independent of the divergence setting.

\begin{theorem}[Bernoulli thinning of centered sums]
\label{thm:centered-erasure}
Let $Z_1,\ldots,Z_n$ be independent centered real variables, and let
$B_1,\ldots,B_n$ be independent $\Ber(q)$ variables, independent of the
$Z_i$, where $q\in[0,1]$. If $\psi:\R\to[0,\infty)$ is finite and
convex with $\psi(0)=0$, then
\begin{equation}
 \E\psi\left(\sum_iB_iZ_i\right)
 \le
 40\sqrt q\,
 \E\psi\left(\sum_iZ_i\right).
 \label{eq:centered-erasure}
\end{equation}
\end{theorem}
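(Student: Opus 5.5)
The plan is to reduce to small $q$, couple $m\approx1/q$ disjoint Bernoulli thinnings of the same family, and extract the factor $\sqrt q$ from an anti-concentration inequality for random sub-sums.

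\emph{Reduction to small $q$.} The bound with constant $1$ holds for every $q$ and every subset. For a fixed index set $I$, the variables $S_I=\sum_{i\in I}Z_i$ and $S_{I^c}$ are independent and $S_{I^c}$ is centered, so $\E[\sum_iZ_i\mid S_I]=S_I$. Conditional Jensen gives $\E\psi(S_I)\le\E\psi(\sum_iZ_i)$. Averaging over $I=\{i:B_i=1\}$, which is independent of the $Z_i$, gives $\E\psi(\sum_iB_iZ_i)\le\E\psi(\sum_iZ_i)$. This proves \eqref{eq:centered-erasure} whenever $40\sqrt q\ge1$, so I may assume $q<1/1600$. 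Set $m=\lfloor 1/q\rfloor$.

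\emph{Coupling by disjoint thinnings.} Give each index $i$ an independent label $L_i\in\{0,1,\dots,m\}$, with $\Pp(L_i=j)=q$ for $1\le j\le m$ and the remaining mass $1-mq\ge0$ on label $0$. Put $T_j=\sum_{i:L_i=j}Z_i$. Each $T_j$ has the law of $\sum_iB_iZ_i$, and by exchangeability
\[
\E\psi\Bigl(\sum_iB_iZ_i\Bigr)=\E\,\frac1m\sum_{j=1}^m\psi(T_j).
\]
Let $\eta_1,\dots,\eta_m$ be independent $\Ber(1/2)$ selectors, independent of everything else. Then $\sum_j\eta_jT_j$ is a random sub-sum of the $Z_i$, with the subset independent of the $Z_i$. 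The Jensen argument above therefore gives $\E\psi(\sum_j\eta_jT_j)\le\E\psi(\sum_iZ_i)$.

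\emph{Deterministic comparison.} It now suffices to show that, for all reals $a_1,\dots,a_m$ and every convex $\psi\ge0$ with $\psi(0)=0$,
\begin{equation*}
\frac1m\sum_{j=1}^m\psi(a_j)\le\frac{C}{\sqrt m}\,\E\psi\Bigl(\sum_{j=1}^m\eta_ja_j\Bigr)
\end{equation*}
for an absolute constant $C$. Applying this conditionally on the $T_j$ and taking expectations yields \eqref{eq:centered-erasure} with constant $C/\sqrt m\le C\sqrt{2q}$, using $m\ge 1/(2q)$.

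To prove the comparison, I write $\sum_j\eta_ja_j=\tfrac12\sum_ja_j+\tfrac12\sum_j\epsilon_ja_j$ with Rademacher signs $\epsilon_j$. I then use two consequences of convexity and $\psi(0)=0$: the map $x\mapsto\psi(x)/x$ is nondecreasing on each half-line, so $\psi(\lambda x)\ge\lambda\psi(x)$ for $\lambda\ge1$. The argument splits the $a_j$ by sign and by dyadic size.

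\emph{Spread case.} Suppose many $a_j$ of comparable size $\approx x$ carry the bulk of $\sum_j\psi(a_j)$. A Paley--Zygmund or Lévy-type estimate for Rademacher sums shows that, conditionally on the other coordinates, $\sum_j\eta_ja_j$ lands with probability $\ge c$ at distance $\gtrsim\sqrt k\,x$ from $0$ on the side that makes $\psi$ large. Here $k$ is the number of such coordinates. This gives $\E\psi\ge c\sqrt k\,\psi(\pm x)$, and the bound follows because $\sqrt k/m\ge\sqrt k\cdot k/m^{3/2}$, up to bookkeeping.

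\emph{Dominated case.} If a few coordinates dominate, I condition on the remaining sum $W$. Convexity gives $\tfrac12[\psi(W)+\psi(W+a_j)]\ge\tfrac12\psi(a_j)$ along an appropriate chord. I will try the choice that, for each $j$, one of $W$ and $W+a_j$ lies at least $|a_j|/2$ from $0$ in the direction of $a_j$, and combine it with the monotonicity of $\psi(x)/x$.

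\emph{Main obstacle.} I expect the deterministic comparison to be the hard step, because $\psi$ may be asymmetric (for instance $\psi(x)=x_+$) and the $a_j$ may have mixed signs. Cancellation between positive and negative $a_j$ must not destroy the $\sqrt m$ gain. If the clean deterministic statement turns out to be false, my fallback is to keep the randomness of the $T_j$. Conditionally on $\{L_i\}$ the $T_j$ are independent and centered, so I would replace the Rademacher anti-concentration by Jensen against $\E[T_j]=0$. The point is that centering lets a one-sided $\psi$ see each $T_j$ on both sides, at the cost of a worse absolute constant, which $40$ should accommodate.
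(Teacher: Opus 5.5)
\textbf{Verdict: there is a genuine gap.} The step you call the deterministic comparison is false, and the fallback you sketch leaves the whole difficulty unproved.

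Your outer reduction is fine and is essentially the paper's. Jensen gives constant $1$ for every $q$. Splitting the indices into $m\approx1/q$ disjoint classes of probability $q$ then reduces matters to bounding $\frac1m\sum_j\E\psi(T_j)$ by $m^{-1/2}$ times $\E\psi$ of the full sum. (The paper colours with exactly $m$ colours for $q=1/m$ and interpolates by convex-order monotonicity in $q$; your extra label $0$ works equally well.)

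\textbf{The deterministic comparison fails.} Take $\psi(x)=x_+$, $a_1=1$ and $a_2=\cdots=a_m=-2$. The sum $\sum_j\eta_ja_j$ is positive only on $\{\eta_1=1,\ \eta_2=\cdots=\eta_m=0\}$, so
\[
 \frac1m\sum_{j=1}^m\psi(a_j)=\frac1m,
 \qquad
 \E\psi\Bigl(\sum_{j=1}^m\eta_ja_j\Bigr)=2^{-m},
\]
and you would need $C\ge2^m/\sqrt m$. The problem is not only asymmetry. For the even hinge $\psi(x)=(|x|-\frac9{10})_+$, take $a_1=1$ and $a_2=\cdots=a_m=-1/(m-1)$. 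The left side is $\frac1{10m}$. But $|\sum_j\eta_ja_j|$ concentrates at $\frac12$, so $\E\psi(\sum_j\eta_ja_j)$ is exponentially small in $m$. Your ``dominated case'' chord fails on the first example too: when $W$ is very negative and $a_j>0$, neither $W$ nor $W+a_j$ lies on the positive side. Once the $T_j$ are frozen, their centering is lost, and centering is exactly what rules out such cancellations. So the comparison cannot be applied conditionally on the $T_j$.

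\textbf{The fallback is where all the work lies.} Keeping the $T_j$ random, independent and centered given the labels is the right setting. But Jensen against $\E T_j=0$ alone only gives $\E\psi(T_j)\le\E\psi(\sum_kT_k)$ for each $j$. That is constant $1$, with no $\sqrt m$ gain. By the hinge representation of convex $\psi$ with minimum at $0$, what you need is a cotype-type inequality for \emph{shifted} positive parts of independent centered variables:
\[
 \Bigl(\sum_j[\E(W_j-t)_+]^2\Bigr)^{1/2}
 \le C'\,\E\Bigl(\sum_jW_j-t\Bigr)_+,
 \qquad t\ge0.
\]
Cauchy--Schwarz over the $m$ classes then yields the factor $\sqrt q$.

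At $t=0$ this inequality follows from symmetrization and the $L^1$ Khintchine bound, since $\E(S)_+=\frac12\E|S|$ for centered $S$. For $t>0$, symmetrization destroys the shift, and handling that is the substance of the paper's argument:
\begin{itemize}
\item It conditions each $W_j$ on the indicator of $\{W_j>t\}$, which reduces to centered two-point variables $(c_j+t)V_j$.
\item It then proves $\E(\sum_jc_jV_j)_+\le8\,\E(\sum_j(c_j+t)V_j-t)_+$.
\item That bound uses a pivotality (integration-by-parts) identity on the Boolean cube.
\item It also uses the Poisson--binomial bound $\Pp(N\ge2)\le7\,\E(H-1)_+$, derived from Newton's inequalities.
\end{itemize}
Your proposal gives no substitute for this step, so the key estimate is missing.
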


The proof is given in Section~\ref{app:centered}. Expectations may
take the value $+\infty$.

\section{Related work}
\label{sec:related}

The direct antecedents are the total-variation homogenization
inequalities in \cite{Kontorovich2026Bernoulli,Kontorovich2026TV}.
The Bernoulli case was established in
\cite[Theorem~1.5]{Kontorovich2026Bernoulli}. Its proof proceeds
through two-sided estimates for total variation between
Poisson-binomial laws under coordinatewise domination, using
parameter interpolation and second-moment extraction. The resulting
homogenization inequality holds for arbitrary Bernoulli parameter
vectors. The extension to arbitrary probability measures on a common
measurable space was obtained in
\cite[Theorem~4]{Kontorovich2026TV}, through a square-root-likelihood
representation by convolutions of finite measures and a lifting
argument that retains the coordinate labels. The present paper
extends this representation to general convex generators and
characterizes exactly those for which uniform homogenization holds.
The local-to-global equivalence simultaneously reduces the sample
size to two and the observation alphabet to two symbols.

The likelihood-ratio formulation belongs to the classical theory of
$f$-divergences \cite{AliSilvey1966,Csiszar1967}.
Reid and Williamson \cite{ReidWilliamson2011} give a systematic
treatment of the integral and variational representations connecting
divergences, Bayes risks, and statistical information. In particular,
these representations express a divergence through binary testing
problems with varying priors. The comparison in
\eqref{eq:homogenization} concerns the additional effect of averaging
coordinate laws before taking independent products. Its validity
depends on the generator, even though every $f$-divergence satisfies
data processing for a fixed experiment.

Binary-alphabet reductions also arise in comparisons between
divergences. Harremo\"es and Vajda \cite{HarremoesVajda2011} show that
the joint range of two $f$-divergences evaluated on the same pair of
measures is the convex hull of the range obtained on binary
alphabets. In the present problem, multiplicative convolution
introduces cross terms between components of a likelihood-law
mixture. The nested two-point decomposition in
Lemma~\ref{lem:local-nested-decomposition} orders the components in
convex order and controls their mixed products. The passage from
two factors to an arbitrary number of factors then requires the
convolution comparison of Theorem~\ref{thm:analytic}.

Roos \cite{Roos2010} studies approximation of a convolution
$F_1*\cdots*F_n$ by $\bar F^{*n}$, where
$\bar F=n^{-1}\sum_iF_i$, on measurable Abelian groups. His
total-variation bounds depend on the heterogeneity of the factors
and on convolution smoothing. This is a related averaging operation,
but the quantities being compared are different: approximation
controls the distance from one convolution to its homogenized law,
whereas \eqref{eq:homogenization} compares divergences between two
experiments before and after homogenization. Our comparison is
multiplicative and imposes no closeness condition among the
coordinate laws.

Klass's estimates for functions of independent sums
\cite{Klass1981} provide the probabilistic input for the comparison
away from the diagonal. Applied to log-likelihood sums with their
exponential normalization, they yield a bound with an additive
remainder; see Section~\ref{app:separated}. The convex thinning
inequality of Theorem~\ref{thm:centered-erasure} and the subsequent
Poissonization argument supply the multiplicative control needed
at arbitrarily small Hellinger distance.

\section{Outline of the proof}
\label{sec:strategy}

\subsection{Two-point reduction and convex order}

The likelihood representation reduces the problem to positive mean-one
laws under multiplicative convolution. Retaining the coordinate index
identifies the averaged likelihood law with the arithmetic mean of the
individual laws; data processing then yields the unlabelled comparison.
The principal reduction at the level of a single law is an ordered
two-point decomposition.

Every finitely supported mean-one law admits a mixture representation
by two-point mean-one laws whose support intervals are nested. These
components are totally ordered in convex order. Consequently, the
mixed product of two components is dominated by the square of the
larger component. Integrating the two-point square bound over this
decomposition gives the corresponding bound for arbitrary mean-one
laws with a loss of at most a factor of two. The construction and the
passage from the binary-experiment condition to the two-point square
bound are proved in Section~\ref{sec:local}.

It remains to pass from a two-factor comparison to arbitrary products.
This is the content of the convolution theorem,
Theorem~\ref{thm:analytic}. Its proof uses two estimates according to
the total squared Hellinger distance.

\subsection{Hellinger coordinates and the separated regime}

For a positive mean-one likelihood ratio $L$, introduce the finite
measure $\eta$ on $\R$ by
\[
 \int\varphi(x)\,\eta(\dd x)
 =\E\bigl[\sqrt L\,\varphi(\tfrac12\log L)\bigr],
 \qquad h(x)=e^{-x}f(e^{2x}).
\]
This correspondence is affine, takes multiplicative convolution to
additive convolution, and satisfies $\E f(L)=\int h\,\dd\eta$.
The transformed integrand is convex: for smooth $f$,
\[
 h''(x)=e^{-x}\bigl[f(e^{2x})+4e^{4x}f''(e^{2x})\bigr]\ge0,
\]
and the general case follows by approximation. Write
\[
 d_i=1-\E\sqrt{L_i},\qquad \Delta=\sum_i d_i.
\]

When $\Delta$ is bounded below, the comparison follows from Klass's
estimates \cite{Klass1981} and the normalization
\begin{equation}
 \E_Pe^{-X}=1,
 \qquad X=\log(\dd P/\dd Q),
 \label{eq:intro-exponential}
\end{equation}
for equivalent $P,Q$. The marginal quantities in Klass's estimates
are preserved by averaging the summand laws. The exponential
normalization controls the negative part of the sum, leaving an
additive error on a fixed divergence scale. A binary contraction of
the product experiment absorbs this error uniformly under the local
hypothesis; see Section~\ref{app:separated}.

\subsection{Thinning and repeated sampling}

For $\Delta\le1/4$, Theorem~\ref{thm:centered-erasure} gives a
multiplicative estimate without an additive error. Its transfer to
Hellinger coordinates requires control of centering: completing each
$\eta_i$ to a probability measure by adding its missing mass at zero
produces means satisfying $\sum_i|m_i|\le\Delta$. The translation
estimate in Proposition~\ref{prop:endpoint-reserve} controls the
resulting shifts. This yields a bound proportional to $\sqrt q$ for
independent thinning of the likelihood factors.

Poissonization replaces the sampling multiplicities by independent
$\Pois(1)$ variables $N_i$. The $k$th layer is then an independent
thinning of the original family with retention probability
\[
 q_k=\Pp\{\Pois(1)\ge k\}\le\frac1{k!}.
\]
The layers are combined by iterating the two-factor comparison along
a binary tree with depths chosen according to their probabilities.
Kraft's inequality and the factorial decay of $q_k$ make the resulting
bound summable. In fact, any positive power in the thinning estimate
suffices. Section~\ref{app:poisson} gives this implication, and
Section~\ref{app:completion} completes the argument and removes the
finite-support and positivity restrictions.

\section{Likelihood-law reduction}
\label{sec:likelihoods}

\subsection{Divergence and data processing}

Let $\lambda$ dominate $P,Q$, with densities $p,q$,
and put $f^\infty=\lim_{t\to\infty}f(t)/t$. Define
\[
 \persp(u,v)=
 \begin{cases}
 v f(u/v),&v>0,\\
 u f^\infty,&v=0,
 \end{cases}
 \qquad \persp(0,0)=0.
\]
Then
\begin{equation}
 D_f(P\|Q)=\int\persp(p,q)\,\dd\lambda.
 \label{eq:experiment-fdiv}
\end{equation}
Positive homogeneity makes the definition independent of $\lambda$.
The divergence is invariant under $f(t)\mapsto f(t)-a(t-1)$.
All inequalities are understood in $[0,\infty]$.

We use the data-processing inequality
\cite[Theorem~7.4]{PolyanskiyWu2025}: for every Markov kernel $T$,
\[
 D_f(PT\|QT)\le D_f(P\|Q).
\]
We also use joint convexity of $D_f$
\cite[Theorem~7.5(b)]{PolyanskiyWu2025}. For affinity and total
variation, we write
\[
 \Aff(P,Q)=\int\sqrt{pq}\,\dd\lambda,
 \qquad
 \TV(P,Q)=\frac12\int|p-q|\,\dd\lambda,
\]
and $1-\Aff(P,Q)\le\TV(P,Q)$.

\begin{lemma}[Removal of absolute continuity]
\label{lem:regularization}
A homogenization inequality proved for equivalent pairs $P_i\sim Q_i$ extends to
arbitrary pairs with the same constant.
\end{lemma}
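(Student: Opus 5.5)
We approximate arbitrary pairs by equivalent ones through mixing with a common reference measure, and pass to the limit using lower semicontinuity on the left and joint convexity on the right. Fix $P_i,Q_i$ and let $\lambda$ be any probability measure dominating all of them, for instance $\lambda=\frac1{2n}\sum_i(P_i+Q_i)$. For $\varepsilon\in(0,1)$ set
\[
 P_i^\varepsilon=(1-\varepsilon)P_i+\varepsilon\lambda,\qquad
 Q_i^\varepsilon=(1-\varepsilon)Q_i+\varepsilon\lambda .
\]
Each $P_i^\varepsilon$ and $Q_i^\varepsilon$ is equivalent to $\lambda$, hence $P_i^\varepsilon\sim Q_i^\varepsilon$. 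The averages satisfy $\bar P^\varepsilon=(1-\varepsilon)\bar P+\varepsilon\lambda$ and $\bar Q^\varepsilon=(1-\varepsilon)\bar Q+\varepsilon\lambda$. The assumed inequality for equivalent pairs therefore gives
\[
 D_f\bigl((\bar P^\varepsilon)^{\otimes n}\big\|(\bar Q^\varepsilon)^{\otimes n}\bigr)
 \le C_f\,D_f\Bigl(\textstyle\bigotimes_iP_i^\varepsilon\Big\|\bigotimes_iQ_i^\varepsilon\Bigr).
\]

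\emph{Right side.} We need a bound, uniform in $\varepsilon$, by $D_f(\bigotimes P_i\|\bigotimes Q_i)$ plus $o(1)$. A direct joint-convexity bound on the product is not immediate, because $\bigotimes_iP_i^\varepsilon$ is not a mixture of $\bigotimes_iP_i$ and a single measure. Instead, expand the product: it is the mixture over subsets $S\subseteq[n]$, with weights $(1-\varepsilon)^{n-|S|}\varepsilon^{|S|}$, of the pairs in which coordinates $i\in S$ are replaced by $\lambda$ in both the $P$ and the $Q$ product. The same weights appear in both arguments, so joint convexity yields
\[
 D_f\Bigl(\textstyle\bigotimes_iP_i^\varepsilon\Big\|\bigotimes_iQ_i^\varepsilon\Bigr)
 \le\sum_S w_S\,D_f\Bigl(\lambda^{\otimes S}\otimes\textstyle\bigotimes_{i\notin S}P_i\Big\|\lambda^{\otimes S}\otimes\bigotimes_{i\notin S}Q_i\Bigr).
\]
By data processing (marginalization, and conversely tensoring with the common factor $\lambda^{\otimes S}$, which is a kernel), each summand equals $D_f(\bigotimes_{i\notin S}P_i\|\bigotimes_{i\notin S}Q_i)$. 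This is at most $D_f(\bigotimes_iP_i\|\bigotimes_iQ_i)$, since dropping coordinates is a Markov kernel. The weights sum to one, so the right side is at most $C_fD_f(\bigotimes P_i\|\bigotimes Q_i)$ for every $\varepsilon$.

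\emph{Left side.} As $\varepsilon\to0$, $(\bar P^\varepsilon)^{\otimes n}\to\bar P^{\otimes n}$ in total variation, and likewise for $\bar Q$. Write all densities with respect to $\lambda^{\otimes n}$ and apply Fatou's lemma: the perspective $\persp$ is lower semicontinuous and nonnegative under the normalization \eqref{eq:nonnegative-normalization}, and the densities converge pointwise because they are polynomial in $\varepsilon$. This gives
\[
 D_f(\bar P^{\otimes n}\|\bar Q^{\otimes n})\le\liminf_{\varepsilon\to0}D_f\bigl((\bar P^\varepsilon)^{\otimes n}\big\|(\bar Q^\varepsilon)^{\otimes n}\bigr).
\]
Combined with the right-side bound, this proves the inequality for arbitrary pairs with the same constant.

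The main point requiring care is the lower semicontinuity of $\persp$ at the boundary $v=0$. There $\persp(u,0)=uf^\infty$ is the recession value, and one must check that $\liminf_{v\downarrow0,\,u'\to u}vf(u'/v)\ge uf^\infty$, which holds by convexity of $f$. One must also check the case $u=v=0$, which holds because $\persp\ge0$.
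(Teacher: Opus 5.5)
Your proof is correct and follows the paper's route: mix each pair with a reference measure to get equivalent pairs, bound the regularized product divergence by the original one uniformly in $\varepsilon$, and pass to the limit on the homogenized side by lower semicontinuity. The differences are cosmetic. The paper mixes with $R_i=(P_i+Q_i)/2$ and obtains the right-side bound in one step by data processing through the coordinatewise channel that resamples from $R_i$ with probability $\varepsilon$; your $2^n$-term joint-convexity expansion is just the mixture decomposition of that channel. For the left side, the paper cites Csisz\'ar's joint lower semicontinuity in total variation, whereas you give a self-contained Fatou argument on densities with respect to $\lambda^{\otimes n}$.
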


\begin{proof}
Put $R_i=(P_i+Q_i)/2$ and replace each pair by
\[
 P_i^\varepsilon=(1-\varepsilon)P_i+\varepsilon R_i,
 \qquad
 Q_i^\varepsilon=(1-\varepsilon)Q_i+\varepsilon R_i.
\]
The measures are equivalent and are obtained from the original pair
by the channel that replaces the observation by an independent draw
from $R_i$ with probability $\varepsilon$. Data processing bounds the
regularized product divergence by the original one. The averages and
their fixed tensor powers converge in total variation as
$\varepsilon\downarrow0$. Joint lower semicontinuity of $f$-divergence
\cite{CsiszarTopological1967} completes the proof.
\end{proof}

\subsection{Multiplicative convolution}

For a probability law $\mu$ on $[0,\infty)$, write
\[
 \Df(\mu)=\int f(t)\,\mu(\dd t).
\]
The law is \emph{mean one} if $\int t\,\mu(\dd t)=1$.
If $X\sim\mu$ and $Y\sim\nu$ are independent, let
$\mu\star\nu=\Law(XY)$ denote multiplicative convolution. Its
identity is $\delta_1$; $\mu^{\star m}$ denotes the law of a product
of $m$ independent variables with law $\mu$, with
$\mu^{\star0}=\delta_1$.
For finite positive measures the same operation is defined by
integrating over the map $(x,y)\mapsto xy$.

\begin{proposition}[Likelihood representation]
\label{thm:likelihood-laws}
If $P_i\ll Q_i$ and $\mu_i=\Law_{Q_i}(\dd P_i/\dd Q_i)$, then
each $\mu_i$ is mean one and
\begin{equation}
 D_f\!\left(\bigotimes_iP_i\middle\|\bigotimes_iQ_i\right)
 =\Df(\mu_1\star\cdots\star\mu_n).
 \label{eq:likelihood-product}
\end{equation}
Every mean-one law can occur in this way.
\end{proposition}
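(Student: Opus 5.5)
The plan has three steps: check the mean-one property, prove the product identity by tensorizing likelihood ratios, and then realize an arbitrary mean-one law by an explicit pair.

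\emph{Step 1: mean one.} Write $L_i=\dd P_i/\dd Q_i$. Since $P_i\ll Q_i$, we have $\int t\,\mu_i(\dd t)=\E_{Q_i}L_i=P_i(\text{whole space})=1$, so each $\mu_i$ is mean one.

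\emph{Step 2: the product identity.} Set $P=\bigotimes_iP_i$ and $Q=\bigotimes_iQ_i$. By Fubini and the uniqueness of product measures, $P\ll Q$ with density
\[
 \frac{\dd P}{\dd Q}(x_1,\ldots,x_n)=\prod_iL_i(x_i)\qquad Q\text{-a.s.}
\]
To see this, check that both sides agree on measurable rectangles and extend by the $\pi$--$\lambda$ theorem. Under $Q$ the coordinates are independent with $x_i\sim Q_i$. Hence the $L_i(x_i)$ are independent with laws $\mu_i$, and $\Law_Q(\dd P/\dd Q)=\mu_1\star\cdots\star\mu_n$ by the definition of $\star$.

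Because $P\ll Q$, we may take $\lambda=Q$ in \eqref{eq:experiment-fdiv}. Then $q\equiv1$ and the perspective term with $v=0$ never appears, so
\[
 D_f(P\|Q)=\E_Qf(\dd P/\dd Q)=\int f\,\dd(\mu_1\star\cdots\star\mu_n),
\]
which is \eqref{eq:likelihood-product}. Integrability is not an issue: with the normalization \eqref{eq:nonnegative-normalization}, $f\ge0$, so all integrals are well defined in $[0,\infty]$. Without that normalization, $f$ is bounded below by an affine function, which is integrable for mean-one laws. The only care needed is to interpret $f(0)$ consistently on the set where the ratio vanishes.

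\emph{Step 3: every mean-one law occurs.} Let $\mu$ be a mean-one law on $[0,\infty)$. Take the space $[0,\infty)$ with $Q=\mu$ and $P(\dd t)=t\,\mu(\dd t)$. Then $P$ is a probability measure because $\mu$ has mean one, and $P\ll Q$ with $\dd P/\dd Q(t)=t$ $Q$-a.s. Therefore $\Law_Q(\dd P/\dd Q)=\mu$. If a common space for several coordinates is required, use this same space for every coordinate.

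No step is a serious obstacle. The only point needing attention is in Step 2: verifying the a.s. identification of the product density, and checking that the definition \eqref{eq:experiment-fdiv} reduces to $\E_Qf(\dd P/\dd Q)$ when $P\ll Q$.
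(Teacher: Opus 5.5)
Your proposal is correct and takes the same approach as the paper. The identity comes from factorizing the product likelihood ratio as $\prod_i L_i(x_i)$, whose factors are independent under $\bigotimes_i Q_i$ and so have law $\mu_1\star\cdots\star\mu_n$; realizability comes from the pair $Q=\mu$, $P(\dd t)=t\,\mu(\dd t)$. You simply fill in routine details the paper leaves implicit: the mean-one check, the $\pi$--$\lambda$ identification of the product density, and taking $\lambda=Q$ in \eqref{eq:experiment-fdiv}.
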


\begin{proof}
The identity follows by factorizing the product likelihood ratio.
Conversely, take $Q=\mu$ and $P(\dd t)=t\mu(\dd t)$.
\end{proof}

\begin{proposition}[Labeled experiments]
\label{thm:lifting}
For the same likelihood laws, put $\bar\mu=n^{-1}\sum_i\mu_i$.
Then
\begin{equation}
 D_f(\bar P^{\otimes n}\|\bar Q^{\otimes n})
 \le\Df(\bar\mu^{\star n}).
 \label{eq:lifting-inequality}
\end{equation}
\end{proposition}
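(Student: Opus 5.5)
The plan is to build a labeled experiment whose likelihood law is exactly $\bar\mu$, and then forget the labels by data processing. I first reduce to equivalent pairs $P_i\sim Q_i$ using Lemma~\ref{lem:regularization}. Its proof regularizes the pairs by a channel, and the same argument applies to the right-hand side: $\Df(\bar\mu^{\star n})$ can only decrease under the channel, since the labeled divergence below equals it. So it is enough to prove the inequality when $P_i\ll Q_i$. Even without the reduction, the argument below only uses $P_i\ll Q_i$, which is the standing hypothesis ``for the same likelihood laws''.

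For one coordinate, define the labeled pair on $\{1,\dots,n\}\times\mathcal X$ by
\[
 \tilde P=\frac1n\sum_i\delta_i\otimes P_i,
 \qquad
 \tilde Q=\frac1n\sum_i\delta_i\otimes Q_i .
\]
Then $\tilde P\ll\tilde Q$, with likelihood ratio $(i,x)\mapsto(\dd P_i/\dd Q_i)(x)$. Under $\tilde Q$, the label $I$ is uniform and, given $I=i$, the ratio has law $\mu_i$. Hence $\Law_{\tilde Q}(\dd\tilde P/\dd\tilde Q)=\bar\mu$. Applying Proposition~\ref{thm:likelihood-laws} to $n$ independent copies of this pair gives
\[
 D_f(\tilde P^{\otimes n}\|\tilde Q^{\otimes n})=\Df(\bar\mu^{\star n}).
\]

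Next, let $T$ be the deterministic kernel $(i,x)\mapsto x$ that erases the label. Then $\tilde PT=\bar P$ and $\tilde QT=\bar Q$. The $n$-fold tensor kernel $T^{\otimes n}$ maps $\tilde P^{\otimes n}$ to $\bar P^{\otimes n}$ and $\tilde Q^{\otimes n}$ to $\bar Q^{\otimes n}$. Data processing \cite[Theorem~7.4]{PolyanskiyWu2025} then yields
\[
 D_f(\bar P^{\otimes n}\|\bar Q^{\otimes n})
 \le D_f(\tilde P^{\otimes n}\|\tilde Q^{\otimes n})
 =\Df(\bar\mu^{\star n}),
\]
which is the claim.

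The only delicate point is the identification of the likelihood law of the labeled pair with $\bar\mu$, including the measurability of the product space. Both are routine: the density of $\tilde P$ with respect to $\tilde Q$ is computed label by label, using the dominating measure $\sum_i\delta_i\otimes Q_i$. Nothing beyond the two cited facts, Proposition~\ref{thm:likelihood-laws} and data processing, is needed.
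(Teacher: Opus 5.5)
Your proof is correct and follows the paper's own argument. The paper also places mass $P_i/n$ and $Q_i/n$ on the slice labelled $i$ of $[n]\times\Omega$, identifies the resulting likelihood law as $\bar\mu$ so that $n$ independent draws have divergence $\Df(\bar\mu^{\star n})$, and then projects out the labels and applies data processing; your opening reduction via Lemma~\ref{lem:regularization} is unnecessary, since the statement already assumes $P_i\ll Q_i$, as you note yourself.
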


\begin{proof}
On $[n]\times\Omega$, assign mass $P_i(\dd x)/n$ and
$Q_i(\dd x)/n$ to the slice with label $i$.
The likelihood law is $\bar\mu$, so the divergence of $n$ independent
draws is $\Df(\bar\mu^{\star n})$. Projection onto the observation
coordinates gives the averaged product measures; apply data processing.
\end{proof}

The main analytic step is the following implication from two factors
to an arbitrary number of factors.

\begin{theorem}[Convolution comparison]
\label{thm:analytic}
There is a universal nondecreasing right-continuous function
$\Cuniv:[0,\infty)\to[1,\infty)$ with the following property.
Suppose that, for every pair of mean-one laws $\alpha,\beta$,
\begin{equation}
 \Df\!\left(\left(\frac{\alpha+\beta}{2}\right)^{\star2}\right)
 \le K\Df(\alpha\star\beta).
 \tag{B}\label{eq:B}
\end{equation}
Then every mean-one family $\mu_1,\ldots,\mu_n$ satisfies
\begin{equation}
 \Df\!\left(\left(\frac1n\sum_i\mu_i\right)^{\star n}\right)
 \le\Cuniv(K)\Df(\mu_1\star\cdots\star\mu_n).
 \tag{H}\label{eq:H}
\end{equation}
The same function $\Cuniv$ works for every admissible generator $f$.
\end{theorem}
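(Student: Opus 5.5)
The plan follows the outline of Section~\ref{sec:strategy}. We work with finitely supported positive mean-one laws; the general case follows at the end by approximation and lower semicontinuity. Pass to Hellinger coordinates. Each $\mu_i$ is sent to the finite measure $\eta_i$ with $\int\varphi\,\dd\eta_i=\E[\sqrt{L_i}\,\varphi(\tfrac12\log L_i)]$. Multiplicative convolution then becomes additive convolution, and $\Df(\mu)=\int h\,\dd\eta$ with $h(x)=e^{-x}f(e^{2x})$ convex and nonnegative. Set $d_i=1-\E\sqrt{L_i}$ and $\Delta=\sum_i d_i$. We split into two regimes: $\Delta\ge 1/4$ (separated) and $\Delta\le 1/4$ (near the diagonal). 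In both regimes hypothesis \eqref{eq:B} is the only input about $f$, and every constant is to depend on $K$ alone.

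\emph{Separated regime.} Write the log-likelihood sum $S=\sum_i X_i$, with $X_i=\log L_i$, under the laws of the factors. Compare it with the corresponding sum of i.i.d.\ draws from the mixture. Klass's estimates \cite{Klass1981} bound $\E\Phi(\sum_i X_i)$ above and below by functionals of the marginal laws alone, for suitable convex $\Phi$. These marginal functionals are linear in the summand laws, so they are unchanged when each law is replaced by the average. The upper half of the estimate applies to the homogenized sum and the lower half to the heterogeneous one, with a universal constant. The exponential normalization \eqref{eq:intro-exponential} controls the negative tail of $S$. The result is an inequality of the form
\[
\Df(\bar\mu^{\star n})\le c\,\Df(\mu_1\star\cdots\star\mu_n)+c\,\epsilon_f,
\]
where $\epsilon_f$ is the divergence on a fixed binary scale. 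To remove the additive error, we use that $\Delta\ge1/4$ forces $1-\Aff$ of the product experiment to be bounded below. A binary contraction (a data-processing test on the product) then gives a two-point experiment of fixed separation. Applying \eqref{eq:B} to it, together with the fact that $f\ge0$ is not identically zero, shows $\epsilon_f\lesssim_K\Df(\mu_1\star\cdots\star\mu_n)$.

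\emph{Near-diagonal regime.} First Poissonize. Replacing $n$ i.i.d.\ draws by a compound sampling with independent multiplicities $N_i\sim\Pois(1)$ on each coordinate is justified by a convexity/averaging argument. The $k$th layer $\{i:N_i\ge k\}$ is then an independent Bernoulli($q_k$) thinning of the family, with $q_k\le 1/k!$. We complete each $\eta_i$ to a probability measure by adding mass at $0$. This produces centered variables after shifts with $\sum_i|m_i|\le\Delta$, and we apply Theorem~\ref{thm:centered-erasure} with $\psi$ a normalized version of $h$. The translation estimate Proposition~\ref{prop:endpoint-reserve} absorbs the shifts. Together these give $\Df(\text{layer }k)\le c\sqrt{q_k}\,\Df(\mu_1\star\cdots\star\mu_n)$.

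Next we recombine the layers. Iterating \eqref{eq:B} along a binary tree gives $\Df(\nu_1\star\cdots\star\nu_m)\le\sum_j K^{\ell_j}\Df(\nu_j^{\star 2^{\ell_j}})$, after squaring leaves appropriately. The leaf depths $\ell_k$ are chosen with Kraft's inequality $\sum 2^{-\ell_k}\le1$ and $\ell_k\approx\log_2$ of a weight comparable to $q_k^{-1/4}$, so that $K^{\ell_k}\sqrt{q_k}$ is summable. The factorial decay of $q_k$ beats any geometric factor $K^{\ell_k}$, which makes this possible. The resulting sum defines $\Cuniv(K)$. It is manifestly universal and can be taken nondecreasing and right-continuous by passing to an upper envelope.

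The main obstacle is the near-diagonal step. Theorem~\ref{thm:centered-erasure} requires exactly centered summands, while the Hellinger coordinates are only approximately centered. We must also check that the tree recombination loses only constants depending on $K$ when layers of very different sizes are merged. I would first try to handle the centering by exploiting $h\ge$ its value near $0$ and the mass-completion shift bound, before attempting any finer decomposition.
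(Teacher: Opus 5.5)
Your plan follows the paper's architecture: Hellinger coordinates, a split at $\Delta=1/4$, Klass plus a binary contraction away from the diagonal, and completion, thinning, Poissonization and a Kraft tree near it. However, the separated regime rests on a false claim. For a full sum of non-centered independent summands, Klass's estimates do \emph{not} bound $\E\Psi(|\sum_iX_i|)$ from below by the aggregate-marginal functionals. If they did, homogenization would hold for every independent sum, since those functionals are unchanged by averaging. It fails already for $X_1\equiv m$, $X_2\equiv-m$: then $S=0$, while the homogenized sum has $|\widetilde S|=2m$ with probability $1/2$. Reduced to the full sum, Klass gives only the upper bound, plus a bound on the drift functional $M$ by $\max\{\Psi(K_{\mathbf X}(2)),2^{\alpha+2}\E\Psi(|S|)\}$.

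The missing step is to use $\E e^{-X_i}=1$ to exclude such cancellation, i.e.\ to prove $\Psi(K_{\mathbf X}(2))\lesssim\Psi(8)+\E\Psi(|S|)$. The paper does this in two parts. First, a tail comparison $\sum_i\Pp(|X_i|>t)\le4\Pp(|S|>t/4)$ for $t\ge K_{\mathbf X}(2)\ge8$, which uses the leave-one-out normalization $\E e^{-(S-X_i)}=1$. Second, a second/fourth-moment argument in which $e^{-x}\ge1-x$ bounds the negative truncated means. That, and not merely control of $S^-$, is where the normalization is essential. Klass also needs $\Psi$ of moderate growth. Under \eqref{eq:B}, $e^{-x}f(e^x)$ and $f(e^{-x})$ double only up to an additive term, so one must pass to $g_\pm(x)+m_f\,x/(1+x)$; this is a second source of the additive error.

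Near the diagonal, the recombination fails as written. Hypothesis \eqref{eq:B} is equivalent to $\Df(\alpha^{\star2})+\Df(\beta^{\star2})\le(4K-2)\Df(\alpha\star\beta)$. It bounds squares by a product, so iterating it cannot bound $\Df(\nu_1\star\cdots\star\nu_m)$ from above, and your tree inequality does not follow. The node inequality you need is $\Df(\rho\star\tau)\le c[\Df(\rho)+\Df(\tau)]$ with $c=\max\{2,4K-2\}$. Put $\beta=\delta_1$ in \eqref{eq:B} to get $\Df(\rho^{\star2})\le c\,\Df(\rho)$. Then use $\frac12\Df(\rho\star\tau)\le\Df(v^{\star2})\le c\,\Df(v)$ for $v=(\rho+\tau)/2$; no leaf squaring is needed.

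Your depth choice $2^{-\ell_k}\asymp q_k^{1/4}$ is also not uniform in $K$. It gives $c^{\ell_k}\asymp q_k^{-\frac14\log_2c}$, so $c^{\ell_k}\sqrt{q_k}$ is not summable once $c\ge4$, i.e.\ once $K\ge3/2$. Since $\ell_k$ grows like $\log(1/q_k)$, $c^{\ell_k}$ is a power of $1/q_k$, not a geometric factor, and factorial decay does not beat it. The exponent must shrink with $c$. Take $\ell_k=L_0+\lceil\gamma\log_2(1/q_k)\rceil$ with $\gamma<1/(2\log_2c)$ and $L_0$ fixed by Kraft. Alternatively take $\ell_k=k$, since $\sum_kc^k/\sqrt{k!}<\infty$.

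Finally, in each layer the random drift $R=\sum_iB_im_i$ of the completed thinned sum must itself carry a factor $q$. A deterministic translation bound such as Proposition~\ref{prop:endpoint-reserve} alone leaves an error of order $\Df(\mu_1\star\cdots\star\mu_n)$ in every layer, which destroys summability. The paper obtains the factor as follows. Write $A=\sum_iB_iZ_i$ and $M_\pm=\sum_i(\pm m_i)_+$. Express $A+R$ as a convex combination of $A$, $A+M_+$ and $A-M_-$, with weights dominated by $\sum_i(|m_i|/M_\pm)B_i$. Then condition on $B_i=1$ and compare in convex order with the full sum.
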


The proof occupies Sections~\ref{app:global}--\ref{app:completion}.
Section~\ref{sec:local} shows that the
two-point condition in Theorem~\ref{thm:main} implies \eqref{eq:B}
with constant $2A$, and that the Bernoulli condition implies it with
constant $16K+4$. Propositions~\ref{thm:likelihood-laws}
and~\ref{thm:lifting}, followed by
Lemma~\ref{lem:regularization}, then give
\eqref{eq:homogenization}.

We write $X\cx Y$ if $X,Y$ have the same finite mean
and $\E\varphi(X)\le\E\varphi(Y)$ for every convex $\varphi$.

\begin{lemma}[Convex order and independent sums]
\label{lem:cx-convolution}
If $X\cx Y$ and $Z$ is independent of each, then
$X+Z\cx Y+Z$ whenever the variables are integrable.
\end{lemma}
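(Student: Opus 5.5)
The plan is to prove the statement by conditioning on $Z$ and reducing to the definition of convex order applied to translates. Fix a convex function $\varphi:\R\to\R$. The goal is $\E\varphi(X+Z)\le\E\varphi(Y+Z)$. The means agree by linearity: $X$ and $Y$ have the same finite mean and $Z$ is integrable, so $\E(X+Z)=\E X+\E Z=\E Y+\E Z=\E(Y+Z)$, all finite.

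For the inequality, use independence to write $\E\varphi(X+Z)$ as an iterated integral against $\Law(X)\otimes\Law(Z)$:
\[
 \E\varphi(X+Z)=\int \E\varphi(X+z)\,\Pp_Z(\dd z),
\]
and similarly for $Y$. For each fixed $z$, the map $x\mapsto\varphi(x+z)$ is convex, so $X\cx Y$ gives $\E\varphi(X+z)\le\E\varphi(Y+z)$ pointwise in $z$. Integrating in $z$ then yields the claim.

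The only technical point is justifying Tonelli/Fubini, since $\varphi$ may be unbounded of either sign. I would handle it by splitting off an affine minorant. Convexity gives an affine $\ell(x)=a+bx$ with $\varphi\ge\ell$, so $\varphi-\ell\ge0$ and Tonelli applies to $\varphi(x+z)-\ell(x+z)$ without integrability concerns. The affine part $\ell(X+Z)$ is integrable, and its expectation is the same for $X$ and $Y$ because the means agree. Hence
\[
 \E\varphi(X+Z)=\E\ell(X+Z)+\int\E\bigl[(\varphi-\ell)(X+z)\bigr]\Pp_Z(\dd z)
\]
in $(-\infty,\infty]$, and likewise for $Y$. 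Since $\ell(\cdot+z)$ is affine and the means agree, the pointwise comparison $\E\varphi(X+z)\le\E\varphi(Y+z)$ transfers to $\varphi-\ell$ for each $z$, and monotonicity of the integral finishes the proof. This bookkeeping is the only place where care is needed, and it is not a real obstacle.
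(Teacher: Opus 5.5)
Your proof is correct and is essentially the paper's argument. The paper integrates out $Z$ first and applies $X\cx Y$ once to the convex function $x\mapsto\E\varphi(x+Z)$, whereas you fix $z$, apply $X\cx Y$ to each finite convex translate $\varphi(\cdot+z)$, and then integrate over $z$. This ordering avoids applying convex order to a function that may take the value $+\infty$, and your affine-minorant bookkeeping correctly justifies the Tonelli step.
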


\begin{proof}
Apply convex order to the convex function
$x\mapsto\E\varphi(x+Z)$. Iteration gives the corresponding assertion
for independent sums.
\end{proof}

\section{Two-point reduction}
\label{sec:local}

For $r<1<t$, let
\begin{equation}
 \rho_{r,t}=a\delta_r+b\delta_t,
 \qquad
 a=\frac{t-1}{t-r},\qquad b=\frac{1-r}{t-r}.
 \label{eq:local-two-point-law}
\end{equation}
This is the unique mean-one law on $\{r,t\}$. Since $r\le rt\le t$,
convexity controls the middle atom of $\rho_{r,t}^{\star2}$.

\begin{lemma}[Two-point comparison]
\label{lem:local-two-point-comparison}
The Bernoulli inequality \eqref{eq:Ber2} with constant $K$ implies
\eqref{eq:endpoint-test} with $M=8K$. The square bound
\eqref{eq:two-point-test} with constant $A$ implies
\eqref{eq:endpoint-test} with $M=A$; conversely, the endpoint condition
with constant $M$ implies the square bound with $A=M+2$. Finally, the
square bound with constant $A$ implies the Bernoulli inequality with the
same constant.
\end{lemma}

\begin{proof}
First assume the Bernoulli inequality with constant $K$. Realize
\eqref{eq:local-two-point-law} as the likelihood law of
\[
 Q_1=a\delta_0+b\delta_1,
 \qquad P_1=ar\delta_0+bt\delta_1.
\]
Set $P_2=Q_2=\delta_0$. In the averaged experiment, symbol $1$ has
likelihood ratio $t$ and $\bar Q$-probability $b/2$. The contribution
$b^2f(t^2)/4$ of $(1,1)$ and nonnegativity of $f$ give
\[
 b^2f(t^2)\le4K\Df(\rho_{r,t}).
\]
Taking instead $P_2=Q_2=\delta_1$ gives the corresponding lower-endpoint
bound $a^2f(r^2)\le4K\Df(\rho_{r,t})$. Adding them and multiplying by
$(t-r)^2$ proves \eqref{eq:endpoint-test} with $M=8K$.

Next,
\begin{equation}
 \Df(\rho_{r,t}\star\rho_{r,t})
 =a^2f(r^2)+2abf(rt)+b^2f(t^2).
 \label{eq:local-square-expansion}
\end{equation}
Since $r\le rt\le t$, convexity and nonnegativity imply
\[
 2abf(rt)\le2ab[f(r)+f(t)]
 \le2[af(r)+bf(t)]=2\Df(\rho_{r,t}).
\]
Since \eqref{eq:endpoint-test} is
$a^2f(r^2)+b^2f(t^2)\le M\Df(\rho_{r,t})$, the square and endpoint
bounds imply one another with $M=A$ and $A=M+2$, respectively.

Finally, suppose the square bound holds with constant $A$. For equivalent
Bernoulli experiments, the likelihood law $\rho$ of the averaged pair
$(\bar P,\bar Q)$ is itself a positive two-point mean-one law, or
$\delta_1$. Therefore
\begin{align*}
 D_f(\bar P^{\otimes2}\|\bar Q^{\otimes2})
 &\le A D_f(\bar P\|\bar Q)\\
 &\le\frac A2\bigl[D_f(P_1\|Q_1)+D_f(P_2\|Q_2)\bigr]\\
 &\le A D_f(P_1\otimes P_2\|Q_1\otimes Q_2).
\end{align*}
Here we used joint convexity and data processing.
Lemma~\ref{lem:regularization} removes the equivalence assumption.
\end{proof}

\subsection{Nested two-point decomposition}

To control cross terms in a mixture of two-point laws, we choose the
components to be ordered by convex order.

\begin{lemma}[Nested decomposition]
\label{lem:local-nested-decomposition}
Every finitely supported mean-one law $\mu$ on $[0,\infty)$ is a mixture
\[
 \mu=\sum_{j=0}^m\lambda_j\rho_j,
 \qquad \lambda_j\ge0,\qquad\sum_j\lambda_j=1,
\]
where $\rho_0=\delta_1$, each other $\rho_j$ is a two-point mean-one
law, and
\[
 \rho_0\cx\rho_1\cx\cdots\cx\rho_m.
\]
\end{lemma}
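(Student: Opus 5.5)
We prove the lemma by induction on the size of the support of $\mu$, peeling off one two-point component at a time so that the intervals of the components are nested and shrink. Two-point mean-one laws with nested supports are comparable in convex order: if $[r,t]\subseteq[r',t']$ with $r\le1\le t$, then $\rho_{r,t}\cx\rho_{r',t'}$. To see this, note that $\rho_{r,t}$ is obtained from $\rho_{r',t'}$ by a mean-preserving contraction. Equivalently, by the standard two-point criterion, every convex $\varphi$ lies below its chord on $[r',t']$, so
\[
 \int\varphi\,\dd\rho_{r,t}\le\int\ell\,\dd\rho_{r,t}=\ell(1)=\int\varphi\,\dd\rho_{r',t'},
\]
where $\ell$ is the chord of $\varphi$ over $[r',t']$. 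Also $\delta_1\cx$ everything by Jensen. It therefore suffices to produce a mixture whose components have nested support intervals, with the outermost component carrying the largest interval.

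The construction runs from the outside in. Let the support of $\mu$ be $x_1<\cdots<x_k$. If $k=1$, then $\mu=\delta_1$ and we are done. Otherwise $x_1<1<x_k$, since the mean is one and the law is not degenerate. Take $\rho_m=\rho_{x_1,x_k}$ and subtract the largest multiple $\lambda\rho_m$ from $\mu$ such that the remainder stays nonnegative. The resulting choice of $\lambda$ is
\[
 \lambda=\min\{\mu(x_1)/a,\ \mu(x_k)/b\},
\]
where $a,b$ are the weights of $\rho_{x_1,x_k}$ from \eqref{eq:local-two-point-law}. The remainder $\mu-\lambda\rho_m$ is a nonnegative measure with mass $1-\lambda$ and first moment $1-\lambda$, because $\rho_m$ has mean one. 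It has lost at least one endpoint atom.

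If $\lambda=1$, we stop. Otherwise we normalize the remainder to a mean-one law $\mu'$ whose support is contained in $\{x_1,\ldots,x_k\}$ with strictly fewer points. Its support also lies in $[x_1,x_k]$, with one of the endpoints removed. Applying the inductive hypothesis to $\mu'$ gives a decomposition whose components all have support intervals inside the convex hull of $\operatorname{supp}\mu'$, which is contained in $[x_1,x_k]$. Rescaling its weights by $1-\lambda$ and appending $\rho_m$ with weight $\lambda$ produces the required chain.

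The point to check carefully is that nesting is genuinely inherited. Each component of the decomposition of $\mu'$ is some $\rho_{r,t}$ with $r,t$ in the support of a later remainder. Each remainder's support lies inside the previous interval, because each step only removes atoms and never adds them. Hence the intervals decrease along the recursion, so the successive components decrease in convex order. Reversing the indexing gives $\rho_1\cx\cdots\cx\rho_m$.

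At the bottom of the recursion a remainder may be concentrated at the single point $1$. In that case it contributes $\delta_1=\rho_0$, with weight $\lambda_0$ possibly zero. We must also rule out a remainder that is one-sided, for example supported in $(1,\infty)$ only. This cannot occur, because a nonnegative measure with total mass equal to its first moment has mean one. Termination is automatic, since the support size strictly drops at each step.
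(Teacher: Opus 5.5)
Your proof is correct and is essentially the paper's argument: peeling off the maximal multiple of $\rho_{x_1,x_k}$ at each step is exactly the paper's outside-in matching of deficits and surpluses, which starts from $(r_1,t_q)$ and exhausts one side before moving inward, and both proofs conclude with the same chord bound for nested intervals. The inherited nesting you check is in fact automatic, since any component with positive weight in a decomposition of $\mu'$ satisfies $\lambda_j\rho_j\le\mu'$ and is therefore supported in $\operatorname{supp}\mu'\subseteq[x_1,x_k]$.
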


\begin{proof}
List the atoms below and above one in increasing order:
\[
 r_1<\cdots<r_p<1<t_1<\cdots<t_q.
\]
The deficits $\mu\{r_i\}(1-r_i)$ and surpluses
$\mu\{t_j\}(t_j-1)$ have equal totals.

Match deficit and surplus starting with $(r_1,t_q)$, exhausting one
side before moving inward. A matched amount $\theta$ between $r$ and
$t$ gives the component $\rho_{r,t}$ weight
\[
 \lambda=\frac{\theta}{1-r}+\frac{\theta}{t-1}.
\]
Its masses at $r,t$ are $\theta/(1-r),\theta/(t-1)$, respectively.
Together with $\mu\{1\}\delta_1$, the components reconstruct $\mu$.

The intervals $[r,t]$ produced by this procedure are nested. If
$r\le r'\le1\le t'\le t$, every mean-one law supported on $[r',t']$
is dominated in convex order by $\rho_{r,t}$. Indeed, the chord bound
\[
 \varphi(x)\le\frac{t-x}{t-r}\varphi(r)
                  +\frac{x-r}{t-r}\varphi(t),\qquad r\le x\le t,
\]
integrates to the asserted inequality. Order the components from the
innermost interval to the outermost and put $\delta_1$ first.
\end{proof}

\begin{proposition}[Extension to arbitrary laws]
\label{prop:local-all-law-squares}
If \eqref{eq:two-point-test} holds with constant $A$, then every
mean-one law $\mu$ on $[0,\infty)$ satisfies
\begin{equation}
 \Df(\mu\star\mu)\le2A\Df(\mu).
 \label{eq:local-all-law-square}
\end{equation}
Consequently, every pair of mean-one laws $\alpha,\beta$ satisfies
\begin{equation}
 \Df\!\left[\left(\frac{\alpha+\beta}{2}\right)^{\star2}\right]
 \le2A\Df(\alpha\star\beta).
 \label{eq:local-full-binary}
\end{equation}
\end{proposition}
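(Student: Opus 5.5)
First reduce to finitely supported laws, and then use the nested decomposition of Lemma~\ref{lem:local-nested-decomposition}. Write $\mu=\sum_{j=0}^m\lambda_j\rho_j$ with $\rho_0=\delta_1\cx\rho_1\cx\cdots\cx\rho_m$. Then
\[
 \Df(\mu\star\mu)=\sum_{j,k}\lambda_j\lambda_k\Df(\rho_j\star\rho_k).
\]

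For $j\le k$, the cross term is handled by convex order. Pass to Hellinger-free form: $\Law(XY)$ with $X\sim\rho_j$, $Y\sim\rho_k$ independent. For fixed $y$, the function $x\mapsto f(xy)$ is convex, so $\rho_j\cx\rho_k$ gives $\E f(XY)\le\E f(X'Y)$ with $X'\sim\rho_k$ independent of $Y$. Hence $\Df(\rho_j\star\rho_k)\le\Df(\rho_k\star\rho_k)\le A\,\Df(\rho_k)$ by \eqref{eq:two-point-test}; the $j=0$ case is trivial. Summing,
\[
 \Df(\mu\star\mu)\le A\sum_k\Df(\rho_k)\,\lambda_k\Bigl(\lambda_k+2\sum_{j<k}\lambda_j\Bigr)\le 2A\sum_k\lambda_k\Df(\rho_k)=2A\,\Df(\mu).
 \]
 The last step uses $\lambda_k+2\sum_{j<k}\lambda_j\le2$ together with linearity of $\Df$ in the law.

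For general mean-one $\mu$ (possibly with an atom at $0$ and infinite support), approximate by finitely supported mean-one laws $\mu_N$. Take $\mu_N$ to be the conditional-expectation discretization: partition $[0,\infty)$ into finitely many cells, the last one unbounded, and replace each cell by its barycenter. Then $\mu_N\cx\mu$, so $\Df(\mu_N)\le\Df(\mu)$. Also $\mu_N\star\mu_N\to\mu\star\mu$ weakly. Lower semicontinuity of $f\ge0$ gives $\Df(\mu\star\mu)\le\liminf\Df(\mu_N\star\mu_N)$, which is at most $2A\,\Df(\mu)$. Atoms at $0$ are allowed, since the two-point laws in the decomposition may then have $r=0$. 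Then \eqref{eq:two-point-test} requires positivity, so I would extend it to $r=0$ by a limit $r\downarrow0$ using lower semicontinuity of $f$ at $0$ on the left side and continuity of $\Df(\rho_{r,t})$ in $r$; the latter holds because $f(0)=\lim_{r\downarrow 0}f(r)$ for a lsc convex $f$. This limiting step, making sure the weak convergence and the boundary value $f(0)$ behave, is the only delicate point. If it proves awkward, one could instead first invoke Lemma~\ref{lem:regularization}-style mixing of $\mu$ with $\delta_1$ (i.e.\ $(1-\varepsilon)\mu+\varepsilon\delta_1$), but atoms at $0$ persist, so the $r\downarrow0$ limit is needed anyway.

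For \eqref{eq:local-full-binary}, set $\mu=(\alpha+\beta)/2$, which is mean one. By the first part,
\[
 \Df(\mu\star\mu)\le2A\,\Df(\mu)=A\bigl[\Df(\alpha)+\Df(\beta)\bigr].
 \]
 It remains to show $\Df(\alpha)\le\Df(\alpha\star\beta)$, and similarly for $\beta$. This follows from Jensen: conditionally on $X\sim\alpha$, $\E[f(XY)\mid X]\ge f(X\,\E Y)=f(X)$. Hence $\Df(\alpha)+\Df(\beta)\le2\Df(\alpha\star\beta)$, giving the constant $2A$.
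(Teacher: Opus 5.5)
Your proposal is correct and follows the paper's proof essentially step for step: the $r\downarrow0$ extension of the two-point bound, the nested decomposition with $\Df(\rho_j\star\rho_k)\le\Df(\rho_k\star\rho_k)\le A\,\Df(\rho_k)$ for $j\le k$, a conditional-expectation discretization that is dominated in convex order, and the conditional-Jensen bound $\Df(\alpha)+\Df(\beta)\le2\Df(\alpha\star\beta)$ for the mixed inequality. The one variation is the limit passage. You use weak convergence of $\mu_N\star\mu_N$ together with weak lower semicontinuity of $\nu\mapsto\int f\,\dd\nu$; this requires partitions whose mesh vanishes on compacts and whose unbounded cell recedes to infinity, a condition you should state explicitly. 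The paper instead uses an increasing filtration, almost-sure martingale convergence of coupled independent copies, and Fatou's lemma, which needs no such condition on the partitions.
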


\begin{proof}
The two-point bound extends to a law with an atom at zero by taking its
lower atom to zero. If $f(0)=\infty$, the right-hand side is already
infinite; otherwise convexity and lower semicontinuity give convergence
of all terms.

First let $\mu$ be finitely supported and use
Lemma~\ref{lem:local-nested-decomposition}. Multiplication by an
independent nonnegative variable preserves convex order, since
$x\mapsto\mathbb E\varphi(xZ)$ is convex. Thus, when $i\le j$,
\[
 \Df(\rho_i\star\rho_j)
 \le\Df(\rho_j\star\rho_j)\le A\Df(\rho_j).
\]
Expanding the mixture square gives
\begin{align*}
 \Df(\mu\star\mu)
 &\le A\sum_j\left(\lambda_j^2
                  +2\lambda_j\sum_{i<j}\lambda_i\right)\Df(\rho_j)\\
 &\le2A\sum_j\lambda_j\Df(\rho_j)=2A\Df(\mu).
\end{align*}

For an arbitrary law, take $X\sim\mu$ and increasing finite
sigma-fields $\mathcal G_k$ generating $\sigma(X)$. The conditional
means $X_k=\mathbb E[X\mid\mathcal G_k]$ have finite support, mean one,
and converge almost surely to $X$. Conditional Jensen gives
$\mathbb E f(X_k)\le\mathbb E f(X)$. Couple independent copies in the
same way. The finite-support bound and Fatou's lemma then yield
\[
 \mathbb E f(XX')
 \le\liminf_k\mathbb E f(X_kX_k')
 \le2A\mathbb E f(X).
\]

To prove \eqref{eq:local-full-binary}, put
$\nu=(\alpha+\beta)/2$. If independent $X\sim\alpha$ and $Y\sim\beta$
have mean one, then $X=\mathbb E[XY\mid X]$ and
$Y=\mathbb E[XY\mid Y]$. Hence
\[
 \Df(\nu)=\tfrac12[\Df(\alpha)+\Df(\beta)]
 \le\Df(\alpha\star\beta).
\]
Apply \eqref{eq:local-all-law-square} to $\nu$.
\end{proof}

\begin{proof}[Proof of Theorem~\ref{thm:main} and
Corollary~\ref{cor:scalar-main}]
The implication \textnormal{(i)}$\Rightarrow$\textnormal{(ii)} is immediate.
Lemma~\ref{lem:local-two-point-comparison} gives
\textnormal{(ii)}$\Rightarrow$\textnormal{(iii)} with $A=8K+2$,
as well as the equivalence with \eqref{eq:endpoint-test} and the
comparison $M\le A\le M+2$ for the least constants.

Assume \textnormal{(iii)}. Proposition~\ref{prop:local-all-law-squares}
gives \eqref{eq:B} with constant $2A$. Theorem~\ref{thm:analytic}
then gives \eqref{eq:H} with constant $\Cuniv(2A)$.
Propositions~\ref{thm:likelihood-laws} and~\ref{thm:lifting} give
\eqref{eq:homogenization} when $P_i\ll Q_i$;
Lemma~\ref{lem:regularization} removes this assumption. This proves
\textnormal{(iii)}$\Rightarrow$\textnormal{(i)} and the bound
$C_f\le\Cuniv(2A)$. Substituting $A=8K+2$ or $A=M+2$ gives the
other stated bounds. The implication from $A$ to the same Bernoulli
constant is part of Lemma~\ref{lem:local-two-point-comparison}.

At two coordinates on an arbitrary alphabet, the same argument uses
\eqref{eq:B} directly, giving constant $2A$. In particular, a
Bernoulli bound $K$ gives a two-coordinate bound $16K+4$ on every
alphabet. These estimates also show that finiteness of the scalar
criterion is independent of the supporting slope used to normalize
$f$.
\end{proof}

\section{Examples}
\label{sec:examples}

\subsection{Symmetric divergences}

\begin{corollary}
\label{cor:local-symmetric}
Suppose $f(t)=t f(1/t)$. The scalar condition
\eqref{eq:endpoint-test} holds if and only if there is
$C<\infty$ such that
\begin{equation}
 f(t^2)\le C(t+1)f(t),\qquad t>1.
 \label{eq:local-symmetric-doubling}
\end{equation}
\end{corollary}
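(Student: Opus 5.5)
The plan is to prove both directions by direct substitution into \eqref{eq:endpoint-test}, using the symmetry $f(t)=tf(1/t)$ to move everything to arguments larger than one.

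\emph{Normalization.} First I check that symmetry is compatible with \eqref{eq:nonnegative-normalization}. If $a\in\partial f(1)$, then applying $f(t)=tf(1/t)$ to the supporting line $f(s)\ge a(s-1)$ at $s=1/t$ gives $f(t)\ge -a(t-1)$. So $-a\in\partial f(1)$ as well. Since $\partial f(1)$ is an interval, $0\in\partial f(1)$. Hence a symmetric $f$ already satisfies $f\ge0$ with no affine correction, and symmetry is preserved.

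\emph{Necessity.} I specialize \eqref{eq:endpoint-test} to the symmetric pair $r=1/t$ with $t>1$. Symmetry gives
\[
 f(t^{-2})=t^{-2}f(t^2),\qquad f(t^{-1})=t^{-1}f(t).
\]
The left side then becomes $2(t-1)^2t^{-2}f(t^2)$. The right side becomes
\[
 M\,\frac{t^2-1}{t}\cdot\frac{2(t-1)}{t}\,f(t).
\]
Cancelling the common factor $2(t-1)^2/t^2$ yields \eqref{eq:local-symmetric-doubling} with $C=M$.

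\emph{Sufficiency.} Assuming \eqref{eq:local-symmetric-doubling}, I bound the two terms on the left of \eqref{eq:endpoint-test} separately.

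For the upper endpoint,
\[
 (1-r)^2f(t^2)\le C(t+1)(1-r)^2f(t).
\]
It then suffices to have the elementary inequality $(1-r)(t+1)\le3(t-r)$ for $0<r<1<t$. For $t\le2$ it holds because $1-r\le t-r$ and $t+1\le3$. For $t\ge2$ it holds because $t+1\le\tfrac32 t\le3(t-1)\le3(t-r)$.

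For the lower endpoint, put $s=1/r>1$. Symmetry together with \eqref{eq:local-symmetric-doubling} at $s$ gives
\[
 f(r^2)=r^2f(s^2)\le Cr^2(s+1)f(s)=C(1+r)f(r)\le2Cf(r).
\]
Since $t-1\le t-r$, this yields $(t-1)^2f(r^2)\le2C(t-r)(t-1)f(r)$.

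Adding the two bounds gives \eqref{eq:endpoint-test} with $M=3C$.

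\emph{Main obstacle.} The only step needing care is the elementary inequality $(1-r)(t+1)\lesssim t-r$, uniformly near the diagonal $t\downarrow1$. There $t-r$ could a priori be small, but it never falls below $1-r$, and that is what the case split at $t\le2$ exploits. The conditions in \eqref{eq:endpoint-test} also hold trivially wherever $f$ vanishes, so no division issues arise.
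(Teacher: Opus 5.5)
Your proof is correct and follows the paper's argument. Necessity comes from setting $r=1/t$, and sufficiency comes from using symmetry to transfer the doubling bound to $f(r^2)\le C(1+r)f(r)$ on $(0,1)$ and then bounding the two endpoint terms separately. The difference from the paper is only cosmetic: it observes that $a(1+r)+b(1+t)=2$ for the weights $a=(t-1)/(t-r)$ and $b=(1-r)/(t-r)$, which gives $(1-r)(t+1)\le 2(t-r)$ without your case split and yields $M=2C$ rather than $3C$. Your remark that symmetry forces $0\in\partial f(1)$, so that nonnegativity holds without breaking symmetry, is a valid clarification the paper leaves implicit.
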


\begin{proof}
Necessity follows by setting $r=1/t$ in
\eqref{eq:endpoint-test}: symmetry reduces that inequality
to \eqref{eq:local-symmetric-doubling} with $C=M$.

For sufficiency, symmetry extends the bound to
$f(r^2)\le C(1+r)f(r)$ for $0<r<1$. For the probabilities in
\eqref{eq:local-two-point-law}, both $a(1+r)$ and $b(1+t)$ are at most
$2$, since their sum equals $2$. Consequently,
\[
 a^2f(r^2)+b^2f(t^2)
 \le2C[af(r)+bf(t)].
\]
This is \eqref{eq:endpoint-test} with $M=2C$.
\end{proof}

\subsection{Power divergences}

\begin{proposition}
\label{prop:local-power-divergences}
For the Cressie--Read generators \cite{CressieRead1984}
\[
 f_\alpha(t)=
 \begin{cases}
 \displaystyle\frac{t^\alpha-\alpha t+\alpha-1}{\alpha(\alpha-1)},
       &\alpha\notin\{0,1\},\\[2mm]
 -\log t+t-1,&\alpha=0,\\
 t\log t-t+1,&\alpha=1,
 \end{cases}
\]
homogenization holds exactly when $0\le\alpha\le1$. Throughout this
interval it holds with constant $1$. Outside this interval it fails
already for two Bernoulli coordinates.
\end{proposition}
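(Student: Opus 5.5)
The proof has two halves. For $0\le\alpha\le1$ we establish homogenization with constant $1$ directly, without going through Theorem~\ref{thm:main}. For $\alpha\notin[0,1]$ we exhibit a failure of the scalar test \eqref{eq:endpoint-test}; by Lemma~\ref{lem:local-two-point-comparison} this rules out \eqref{eq:Ber2} for every finite $K$.

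\emph{Constant $1$ on $[0,1]$.} The endpoints $\alpha=0,1$ are relative entropy and its reverse. By tensorization and joint convexity (the introduction's argument),
\[
D(\bar P^{\otimes n}\|\bar Q^{\otimes n})=nD(\bar P\|\bar Q)\le\sum_iD(P_i\|Q_i)=D\Bigl(\bigotimes_iP_i\Bigm\|\bigotimes_iQ_i\Bigr).
\]
For $0<\alpha<1$, write $H_\alpha(P,Q)=\int p^\alpha q^{1-\alpha}\,\dd\lambda\in(0,1]$, so that
\[
D_{f_\alpha}=\frac{1-H_\alpha}{\alpha(1-\alpha)}.
\]
The quantity $H_\alpha$ is multiplicative over products and jointly concave, since $(u,v)\mapsto u^\alpha v^{1-\alpha}$ is concave and $1$-homogeneous. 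Hence
\[
H_\alpha(\bar P,\bar Q)\ge\frac1n\sum_i H_\alpha(P_i,Q_i)\ge\prod_iH_\alpha(P_i,Q_i)^{1/n}
\]
by AM--GM. Raising to the $n$th power gives $H_\alpha(\bar P^{\otimes n},\bar Q^{\otimes n})\ge\prod_iH_\alpha(P_i,Q_i)$, which is exactly \eqref{eq:homogenization} with $C_f=1$. Optimality of $1$ is immediate from $n=1$ or from identical pairs.

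\emph{Failure for $\alpha>1$.} We take $r\downarrow0$ at fixed $t$ in \eqref{eq:endpoint-test}, or more simply send $t\to\infty$ with $r$ fixed, say $r=1/2$.

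For $\alpha>1$, $f_\alpha(t)\sim t^\alpha/(\alpha(\alpha-1))$. The left side is then of order $(1-r)^2t^{2\alpha}$, while the right side is of order
\[
M\,t\,\bigl[t\,f(r)+t^\alpha\bigr]=O\bigl(t^{1+\alpha}+t^2\bigr).
\]
Since $2\alpha>1+\alpha$ and $2\alpha>2$, no finite $M$ works. This recovers the $\chi^2$ case $\alpha=2$.

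\emph{Failure for $\alpha<0$.} Now $f_\alpha(r)\sim r^{\alpha}/(\alpha(\alpha-1))\to\infty$ as $r\downarrow0$, while $f_\alpha$ grows linearly at infinity. Fix $t=2$ and let $r\to0$:
\begin{itemize}
\item the left side is $\gtrsim f(r^2)\asymp r^{2\alpha}$;
\item the right side is $\lesssim M\bigl[f(r)+f(2)\bigr]\asymp r^{\alpha}$.
\end{itemize}
Since $2\alpha<\alpha$, the ratio blows up.

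\emph{Conclusion.} In both failing regimes Corollary~\ref{cor:scalar-main} says the conditions of Theorem~\ref{thm:main} fail. In particular \eqref{eq:Ber2} fails for two Bernoulli coordinates, and the explicit witnesses are the experiments from the proof of Lemma~\ref{lem:local-two-point-comparison}.

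The only point needing care is the normalization \eqref{eq:nonnegative-normalization}: $f_\alpha$ is already nonnegative with $f_\alpha'(1)=0$, so the scalar test applies to it as written. The asymptotics themselves are routine, and the sole delicate step is choosing the right limiting direction for each sign regime, as done above.
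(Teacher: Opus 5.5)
Your proof is correct and follows the paper's argument. For $0<\alpha<1$ you use joint concavity and multiplicativity of $H_\alpha$ with AM--GM, and for $\alpha\in\{0,1\}$ additivity plus joint convexity of relative entropy. For $\alpha>1$ the endpoint criterion blows up as $t\to\infty$ with ratio of order $t^{\alpha-1}$, and for $\alpha<0$ as $r\downarrow0$ with ratio of order $r^{\alpha}$; Lemma~\ref{lem:local-two-point-comparison} then transfers this failure to the Bernoulli inequality. One slip: your opening aside that $r\downarrow0$ at fixed $t$ would also work for $\alpha>1$ is false, since $f_\alpha(0)=1/\alpha$ is finite there and the ratio stays bounded. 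The limit you actually carry out, $t\to\infty$, is the right one.
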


\begin{proof}
For $0<\alpha<1$, put
\[
 A_\alpha(P,Q)=\int p^\alpha q^{1-\alpha}\,\mathrm d\lambda.
\]
Then
\[
 D_{f_\alpha}(P\|Q)
 =\frac{1-A_\alpha(P,Q)}{\alpha(1-\alpha)}.
\]
Joint concavity, multiplicativity, and the arithmetic--geometric mean
inequality give, with $a_i=A_\alpha(P_i,Q_i)$,
\[
 A_\alpha(\bar P,\bar Q)^n
 \ge\left(\frac1n\sum_i a_i\right)^n
 \ge\prod_i a_i
 =A_\alpha\!\left(\bigotimes_iP_i,\bigotimes_iQ_i\right).
\]
This proves the claim with constant $1$. At $\alpha=1$, joint
convexity and additivity of Kullback--Leibler divergence give
\[
 nD(\bar P\|\bar Q)
 \le\sum_iD(P_i\|Q_i)
 =D\!\left(\bigotimes_iP_i\middle\|\bigotimes_iQ_i\right).
\]
Interchanging $P$ and $Q$ proves the endpoint $\alpha=0$.

If $\alpha>1$, fix $0<r<1$ and let $t\to\infty$ in the endpoint
criterion. The right-endpoint contribution divided by the original
two-point divergence grows on the order of $t^{\alpha-1}$.
If $\alpha<0$, fix $t>1$ and let $r\downarrow0$; the analogous
lower-endpoint ratio grows on the order of $r^\alpha$.
Both are unbounded, so Lemma~\ref{lem:local-two-point-comparison} rules
out even the Bernoulli inequality.
\end{proof}

\subsection{Total variation}

For $f(t)=|t-1|/2$, a product of two independent mean-one variables
with the same law satisfies
\[
 \E f(XX')
 \le\frac12\E\bigl[X|X'-1|+|X-1|\bigr]
 =2\E f(X).
\]
Theorem~\ref{thm:main} gives a universal homogenization constant.
More precisely, for $\nu=(\alpha+\beta)/2$, expansion of $\nu^{\star2}$
and the same product bound give
\begin{align*}
 \Df(\nu^{\star2})
 &\le\tfrac12\Df(\alpha)
       +\tfrac12\Df(\alpha\star\beta)+\tfrac12\Df(\beta)\\
 &\le\tfrac32\Df(\alpha\star\beta).
\end{align*}
Thus $\Cuniv(3/2)$ is admissible. A direct proof with an explicit
universal constant is given in \cite{Kontorovich2026TV}.

\subsection{Failure for $\chi^2$}

For $f(t)=(t-1)^2$, take
\[
 P_1=\delta_1,\qquad
 Q_1=(1-\varepsilon)\delta_0+\varepsilon\delta_1,
 \qquad P_2=Q_2=\delta_0,
 \qquad0<\varepsilon<1.
\]
The original product divergence is $(1-\varepsilon)/\varepsilon$.
In the averaged pair, symbol $1$ has likelihood ratio
$1/\varepsilon$ and $\bar Q$-probability $\varepsilon/2$.
Its repetition therefore contributes
\[
 \frac{\varepsilon^2}{4}
       (\varepsilon^{-2}-1)^2
 =\frac{(1-\varepsilon^2)^2}{4\varepsilon^2}
\]
to the homogenized divergence. The ratio of the two divergences is at
least
\[
 \frac{(1-\varepsilon)(1+\varepsilon)^2}{4\varepsilon}
 \longrightarrow\infty.
\]
\subsection{Insufficiency of endpoint growth bounds}

\begin{proposition}[A generator with a flat interval]
\label{ex:local-flat-generator}
For $p>2$, let
\[
 f(t)=
 \begin{cases}
 (1-t)^p,&0\le t<1,\\
 0,&1\le t\le2,\\
 t-2,&t>2.
 \end{cases}
\]
This generator is convex, bounded at zero, and linear at infinity, yet
it fails the two-coordinate Bernoulli inequality.
\end{proposition}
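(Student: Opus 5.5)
The plan is to argue by contraposition through the scalar criterion. By Lemma~\ref{lem:local-two-point-comparison}, the Bernoulli inequality \eqref{eq:Ber2} with constant $K$ implies the endpoint test \eqref{eq:endpoint-test} with $M=8K$. It therefore suffices to show that no finite $M$ satisfies \eqref{eq:endpoint-test} for this $f$. The elementary properties come first.

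\emph{Convexity.} On $[0,1)$ the function $(1-t)^p$ is convex, with derivative $-p(1-t)^{p-1}\to0$ as $t\uparrow1$. So the one-sided derivatives match the flat piece at $1$. At $t=2$ the slope jumps from $0$ to $1$, which preserves convexity.

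\emph{Other properties.} We have $f(1)=0$ and $f\ge0$, so the normalization \eqref{eq:nonnegative-normalization} already holds. The generator is bounded at zero since $f(0)=1$, and linear at infinity since $f(t)/t\to1$.

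The key idea is to exploit the flat interval. Choose the upper atom $t\in(1,\sqrt2\,]$ replaced instead by a fixed $t\in(\sqrt2,2]$, concretely $t=3/2$. Then $f(t)=0$ while $f(t^2)=t^2-2=1/4>0$. With this choice the right side of \eqref{eq:endpoint-test} reduces to
\[
 M(t-r)(t-1)f(r)=M(t-r)(t-1)(1-r)^p .
\]
Dropping the nonnegative term $(t-1)^2f(r^2)$, the left side is at least $(1-r)^2 f(t^2)$. Hence any admissible $M$ must satisfy
\[
 M\ \ge\ \frac{t^2-2}{(t-r)(t-1)(1-r)^{p-2}}
\]
for all $0<r<1$. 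Since $p>2$, letting $r\uparrow1$ sends the right-hand side to $+\infty$, because the denominator tends to $0$ while $t-r\to t-1=1/2$ stays bounded. So \eqref{eq:endpoint-test} fails for every finite $M$, and by the lemma the two-coordinate Bernoulli inequality fails for every finite $K$.

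There is no real obstacle; the only point needing care is the choice of $t$ with $t\le2<t^2$. This choice makes the original two-point divergence see only the lower atom, which is near $1$ and hence of order $(1-r)^p$. The squared upper atom meanwhile escapes the flat region. One could also exhibit the explicit Bernoulli witness from the proof of Lemma~\ref{lem:local-two-point-comparison}, namely $P_2=Q_2=\delta_0$ with $\rho_{r,3/2}$ realized on $\{0,1\}$. It gives $b^2f(9/4)\le 4K\Df(\rho_{r,3/2})$, and the same limit $r\uparrow1$ contradicts this directly.
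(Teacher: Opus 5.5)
Your argument is correct and is essentially the paper's: fix $t\in(\sqrt2,2]$ so that $f(t)=0<f(t^2)$, let $r=1-\varepsilon\uparrow1$, and note that the divergence of $\rho_{r,t}$ is of order $\varepsilon^p$ while the repeated upper atom contributes order $\varepsilon^2$. The only difference is cosmetic: you route the contradiction through the endpoint test \eqref{eq:endpoint-test} and the $M=8K$ clause of Lemma~\ref{lem:local-two-point-comparison}, whereas the paper shows that the square ratio $\Df(\rho_{r,t}\star\rho_{r,t})/\Df(\rho_{r,t})$ blows up and then invokes the equivalence in Theorem~\ref{thm:main}.
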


\begin{proof}
Fix $\sqrt2<t<2$, set $r=1-\varepsilon$, and use the two-point law
$\rho_{r,t}$. Its upper atom has probability of order $\varepsilon$,
while its lower atom has probability tending to one. Since $f(t)=0$,
\[
 \Df(\rho_{r,t})\asymp\varepsilon^p.
\]
In the square, the upper atom repeats with probability of order
$\varepsilon^2$, and $f(t^2)=t^2-2>0$. Hence
\[
 \frac{\Df(\rho_{r,t}\star\rho_{r,t})}{\Df(\rho_{r,t})}
 \gtrsim\varepsilon^{2-p}\longrightarrow\infty.
\]
\end{proof}

Thus asymptotic bounds at zero and infinity do not suffice: the
criterion also depends on the joint behavior on both sides of one.

\section{Quantitative dependence}
\label{sec:constants}

The modulus $\Cuniv(K)$ is uniform over generators with the same
two-coordinate bound. The two-point reduction loses a factor of two;
the main loss comes from controlling repeated sampling. The dependence
on $K$ is not asserted to be optimal. For nearby experiments, the bound
in Section~\ref{app:poisson} is $c^{O(\log\log c)}$, where
$c=\max\{2,4K-2\}$. The bound away from zero Hellinger distance is
polynomial in $c$, apart from the constant in Klass's theorem.
The final choice of $\Cuniv$ is given in
\eqref{eq:universal-modulus}.

\section{Product estimates and Hellinger coordinates}
\label{app:global}

We now prove Theorem~\ref{thm:analytic}. Throughout
Sections~\ref{app:global}--\ref{app:completion}, assume $f\ge0$,
$f(1)=0$, and \eqref{eq:B} with constant $K$.

Set
\begin{equation}
 c:=\max\{2,4K-2\},\qquad a:=\log_2c.
 \label{eq:c-a}
\end{equation}
Unless specified otherwise, constants depend only on $c$.
We first work with positive finitely supported likelihood laws;
Section~\ref{app:completion} removes this restriction with unchanged
constants.

\subsection{Product estimates}

\begin{lemma}[Convolution monotonicity]
\label{lem:monotonicity}
If $\alpha,\beta$ are mean-one laws, then
\[
 \Df(\alpha)\le\Df(\alpha\star\beta),
 \qquad
 \Df(\beta)\le\Df(\alpha\star\beta).
\]
The assertion remains valid in $[0,\infty]$.
\end{lemma}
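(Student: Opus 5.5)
The plan is to prove the inequality by conditional Jensen, exactly as in the last step of Proposition~\ref{prop:local-all-law-squares}. Let $X\sim\alpha$ and $Y\sim\beta$ be independent, so that $XY\sim\alpha\star\beta$. Since $Y\ge0$ has mean one and is independent of $X$, we have
\[
 \E[XY\mid X]=X\,\E Y=X .
\]
The generator $f$ is convex and lower semicontinuous on $[0,\infty)$, with values in $(-\infty,\infty]$. Under the standing normalization it is also nonnegative. Conditional Jensen therefore gives, almost surely,
\[
 f(X)=f\bigl(\E[XY\mid X]\bigr)\le\E[f(XY)\mid X].
\]
Taking expectations yields $\Df(\alpha)=\E f(X)\le\E f(XY)=\Df(\alpha\star\beta)$. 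The bound $\Df(\beta)\le\Df(\alpha\star\beta)$ follows by exchanging the roles of $X$ and $Y$.

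The only point needing care is the validity of conditional Jensen in $[0,\infty]$, when $f$ may take the value $+\infty$ at $0$ or the expectations may be infinite. I would handle this as follows. Write $f$ as the supremum of countably many affine minorants $\ell_k(t)=c_k+s_kt$. This representation is available on $[0,\infty)$ because $f$ is lsc and convex, including at the boundary point $0$.

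For each $k$, the variable $\ell_k(XY)$ is integrable, since $XY$ has mean one. Hence
\[
 \E[f(XY)\mid X]\ge\E[\ell_k(XY)\mid X]=\ell_k(X)\quad\text{a.s.}
\]
Taking the supremum over the countably many $k$ gives $\E[f(XY)\mid X]\ge f(X)$ almost surely. Conditional expectations of the nonnegative variable $f(XY)$ are well defined in $[0,\infty]$, so there is no integrability issue. Taking expectations, for instance by the tower property for nonnegative variables, then gives the claim in $[0,\infty]$.

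An alternative route is to reduce first to finitely supported laws by the conditional-mean approximation used in Proposition~\ref{prop:local-all-law-squares}, and then pass to the limit with Fatou's lemma. The direct argument above avoids this and is cleaner.

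I do not expect a genuine obstacle. The only subtle step is the boundary case $f(0)=\infty$ with $\beta$ charging $0$. There $XY$ may vanish while $X>0$, which only makes the right-hand side larger, so the inequality is consistent. It is covered by the affine-minorant argument, since that argument never requires $f(XY)$ to be finite.
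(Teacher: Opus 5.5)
Your proposal is correct and follows the paper's own proof: independence and $\E Y=1$ give $X=\E[XY\mid X]$, conditional Jensen gives $f(X)\le\E[f(XY)\mid X]$, and taking expectations (with the symmetric argument for $\beta$) gives the claim. Your justification of conditional Jensen in $[0,\infty]$, via countably many affine minorants of the lower semicontinuous convex $f\ge0$, is sound and fills in a step the paper leaves implicit.
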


\begin{proof}
Let $X\sim\alpha$ and $Y\sim\beta$ be independent. Since $\E Y=1$,
\[
 X=\E[XY\mid X].
\]
Conditional Jensen gives $f(X)\le\E[f(XY)\mid X]$. Taking expectations
proves the first inequality, and the second is symmetric.
\end{proof}

\begin{proposition}[Products and repeated factors]
\label{prop:binary-calculus}
For mean-one laws $\alpha,\beta,\rho,\tau$ and every integer $m\ge1$,
\begin{align}
 \Df(\alpha^{\star2})+\Df(\beta^{\star2})
 &\le c\,\Df(\alpha\star\beta),\label{eq:two-square}\\
 \Df(\rho^{\star2})
 &\le c\,\Df(\rho),\label{eq:self-square}\\
 \Df(\rho\star\tau)
 &\le c\,[\Df(\rho)+\Df(\tau)],\label{eq:reverse-product}\\
 \Df(\rho^{\star m})
 &\le c^{\lceil\log_2m\rceil}\Df(\rho)
 \le c\,m^a\Df(\rho).\label{eq:power-growth}
\end{align}
\end{proposition}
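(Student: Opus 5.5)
The plan is to derive all four estimates from \eqref{eq:B}, expanding the multiplicative square of a mixture and then feeding the results back into one another. Two facts are used throughout: $\Df$ is linear in the law, and $\star$ is bilinear on finite measures. Hence, for $\nu=(\alpha+\beta)/2$,
\[
 \nu^{\star2}=\tfrac14\bigl(\alpha^{\star2}+2\,\alpha\star\beta+\beta^{\star2}\bigr),
 \qquad
 \Df(\nu^{\star2})=\tfrac14\bigl[\Df(\alpha^{\star2})+2\Df(\alpha\star\beta)+\Df(\beta^{\star2})\bigr].
\]
All terms are nonnegative, so these identities hold in $[0,\infty]$.

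For \eqref{eq:two-square}, I insert this expansion into \eqref{eq:B}, obtaining
\[
 \Df(\alpha^{\star2})+\Df(\beta^{\star2})+2\Df(\alpha\star\beta)\le4K\Df(\alpha\star\beta).
\]
If $\Df(\alpha\star\beta)=\infty$ there is nothing to prove. Otherwise I subtract $2\Df(\alpha\star\beta)$, which gives the bound $(4K-2)\Df(\alpha\star\beta)\le c\,\Df(\alpha\star\beta)$. The case $4K-2<2$ is harmless because $c\ge4K-2$. For \eqref{eq:self-square}, I take $\alpha=\rho$ and $\beta=\delta_1$ in \eqref{eq:two-square}. Then $\alpha\star\beta=\rho$ and $\Df(\delta_1^{\star2})=f(1)=0$.

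For \eqref{eq:reverse-product}, I apply the expansion to $\nu=(\rho+\tau)/2$ and drop the two square terms, using $f\ge0$. This gives $\Df(\rho\star\tau)\le2\Df(\nu^{\star2})$. Since $\nu$ is mean one, \eqref{eq:self-square} applies to it and yields
\[
 2\Df(\nu^{\star2})\le2c\,\Df(\nu)=c\bigl[\Df(\rho)+\Df(\tau)\bigr].
\]

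For \eqref{eq:power-growth}, put $k=\lceil\log_2m\rceil$, so that $m\le2^k$. Since $\rho^{\star2^k}=\rho^{\star m}\star\rho^{\star(2^k-m)}$ and both factors are mean one, Lemma~\ref{lem:monotonicity} gives $\Df(\rho^{\star m})\le\Df(\rho^{\star2^k})$. Each $\rho^{\star2^j}$ is mean one, so iterating \eqref{eq:self-square} $k$ times gives $\Df(\rho^{\star2^k})\le c^k\Df(\rho)$. Finally, $k\le1+\log_2m$ and $c\ge2>1$, so $c^k\le c\cdot c^{\log_2m}=c\,m^{a}$. The case $m=1$ is trivial.

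I expect no real obstacle here. The only step needing care is the subtraction in \eqref{eq:two-square} when values may be infinite, and that is settled by the case split above.
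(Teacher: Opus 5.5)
Your proposal is correct and follows the paper's proof essentially step for step. You expand $\Df$ of the mixture square by linearity to get \eqref{eq:two-square} from \eqref{eq:B}, specialize to $\beta=\delta_1$ for \eqref{eq:self-square}, drop the square terms for \eqref{eq:reverse-product}, and combine dyadic iteration of \eqref{eq:self-square} with Lemma~\ref{lem:monotonicity} for \eqref{eq:power-growth}; your case split when $\Df(\alpha\star\beta)=\infty$ is a small point of care that the paper leaves implicit.
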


\begin{proof}
By linearity,
\[
 \Df\!\left[
  \left(\frac{\alpha+\beta}{2}\right)^{\star2}
 \right]
 =
 \frac14\Df(\alpha^{\star2})
 +\frac12\Df(\alpha\star\beta)
 +\frac14\Df(\beta^{\star2}).
\]
Thus \eqref{eq:B} yields \eqref{eq:two-square}. Taking
$\beta=\delta_1$ proves \eqref{eq:self-square}.

For $v=(\rho+\tau)/2$,
\[
 \frac12\Df(\rho\star\tau)
 \le \Df(v^{\star2})
 \le c\Df(v)
 =\frac c2[\Df(\rho)+\Df(\tau)],
\]
which proves \eqref{eq:reverse-product}.

Iterating \eqref{eq:self-square} gives
$\Df(\rho^{\star2^r})\le c^r\Df(\rho)$. If $m\le2^r$, then
$\rho^{\star m}$ is a convolution factor of $\rho^{\star2^r}$, so
Lemma~\ref{lem:monotonicity} applies. Take
$r=\lceil\log_2m\rceil$.
\end{proof}

\subsection{Hellinger transform}

For a positive mean-one law $\mu$, define a finite measure $\eta_\mu$ on
$\R$ by
\begin{equation}
 \int_\R\varphi(x)\,\eta_\mu(\dd x)
 :=
 \int_{(0,\infty)}
 \varphi\!\left(\frac12\log t\right)\sqrt t\,\mu(\dd t).\label{eq:hell-def}
\end{equation}
Let
\[
 \mathcal A
 :=
 \left\{
  \eta\ge0:
  \int e^x\,\eta(\dd x)=\int e^{-x}\,\eta(\dd x)=1
 \right\}.
\]

\begin{proposition}[Hellinger correspondence]
\label{prop:hellinger}
The map $\mu\mapsto\eta_\mu$ is an affine bijection from positive
mean-one laws onto $\mathcal A$. Its inverse is
\[
 \int\psi(t)\,\mu_\eta(\dd t)
 =
 \int\psi(e^{2x})e^{-x}\,\eta(\dd x).
\]
Moreover,
\[
 \eta_{\mu\star\nu}=\eta_\mu*\eta_\nu,
\]
where $*$ denotes additive convolution. If
\begin{equation}
 h(x):=h_f(x):=e^{-x}f(e^{2x}),
 \qquad
 \Ff(\eta):=\int h\,\dd\eta,\label{eq:h-def}
\end{equation}
then
\[
 \Df(\mu)=\Ff(\eta_\mu).
\]
Finally,
\begin{equation}
 d(\mu)
 :=1-\eta_\mu(\R)
 =1-\int\sqrt t\,\mu(\dd t)
 =\frac12\int(\sqrt t-1)^2\,\mu(\dd t).\label{eq:defect}
\end{equation}
\end{proposition}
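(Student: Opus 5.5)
The proof is a sequence of direct verifications. Each rests on the pushforward definition \eqref{eq:hell-def} and the substitution $t=e^{2x}$, under which $\sqrt t=e^{x}$.

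\emph{Image lies in $\mathcal A$.} Take $\varphi(x)=e^{x}$ in \eqref{eq:hell-def}. This gives $\int e^x\,\dd\eta_\mu=\int t\,\mu(\dd t)=1$. Taking $\varphi(x)=e^{-x}$ gives $\int e^{-x}\,\dd\eta_\mu=\mu(0,\infty)=1$, since $\mu$ is positive. Finiteness of $\eta_\mu$ follows from $\sqrt t\le(1+t)/2$. Affinity of $\mu\mapsto\eta_\mu$ is immediate, because \eqref{eq:hell-def} is linear in $\mu$.

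\emph{Bijectivity.} For $\eta\in\mathcal A$, define $\mu_\eta$ by the displayed formula. It is a positive measure on $(0,\infty)$. Its total mass is $\int e^{-x}\,\dd\eta=1$, and its mean is $\int e^{2x}e^{-x}\,\dd\eta=1$. Composing the two maps, a test function $\psi$ is sent to $\psi(e^{2x})e^{-x}$ and then to $\psi(t)t^{-1/2}\sqrt t=\psi(t)$. In the other order, $\varphi$ is sent to $\varphi(\tfrac12\log t)\sqrt t$ and then to $\varphi(x)e^{x}e^{-x}$. So the maps are mutually inverse; one direction gives injectivity and the other gives surjectivity onto $\mathcal A$.

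\emph{Convolution.} Let $X\sim\mu$ and $Y\sim\nu$ be independent. Then
\[
\int\varphi\,\dd\eta_{\mu\star\nu}=\E\bigl[\varphi(\tfrac12\log X+\tfrac12\log Y)\sqrt X\sqrt Y\bigr].
\]
The weight $\sqrt{X}\sqrt{Y}$ factorizes and the argument is additive. By Fubini, this equals $\iint\varphi(x+y)\,\eta_\mu(\dd x)\,\eta_\nu(\dd y)$, which is the additive convolution $\eta_\mu*\eta_\nu$.

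\emph{Divergence and defect.} Apply the inverse formula with $\psi=f$, using $f\ge0$ so that no integrability issue arises. This gives $\Df(\mu)=\int f(e^{2x})e^{-x}\,\dd\eta_\mu=\Ff(\eta_\mu)$. Next, $\eta_\mu(\R)=\int\sqrt t\,\mu(\dd t)$. Expanding $(\sqrt t-1)^2=t-2\sqrt t+1$ and using total mass one and mean one yields \eqref{eq:defect}.

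There is no genuine obstacle; everything is a change of variables. The only care needed is in the measure-theoretic bookkeeping. One must check that the test-function identities characterize the measures, which holds because they determine the measures on bounded Borel $\varphi$ and $\psi$. One must also use positivity of $\mu$ so that $\log t$ is defined $\mu$-almost surely.
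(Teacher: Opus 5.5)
Your proposal is correct and takes the same approach as the paper, which checks the two exponential moments of $\eta_\mu$ and the mass and mean of the inverse, then says the remaining identities follow from the definitions. You carry out those remaining checks explicitly: composing the two maps, factorizing $\sqrt{X}\sqrt{Y}$ for the convolution identity, taking $\psi=f$ for the divergence, and expanding $(\sqrt t-1)^2$ for the defect.
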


\begin{proof}
The two exponential moments of $\eta_\mu$ are respectively
$\int t\,\dd\mu=1$ and $\int1\,\dd\mu=1$. Conversely, the displayed
inverse has total mass $\int e^{-x}\dd\eta=1$ and mean
$\int e^x\dd\eta=1$. The remaining identities follow from the
definitions.
\end{proof}

\begin{lemma}[Convexity of the transformed generator]
\label{lem:h-convex}
The function $h$ in \eqref{eq:h-def} is finite, nonnegative, convex, and
satisfies $h(0)=0$.
\end{lemma}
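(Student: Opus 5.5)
Finiteness, nonnegativity and $h(0)=0$ are immediate: $f$ is finite on $(0,\infty)$ and $e^{2x}\in(0,\infty)$ for every real $x$, the normalization \eqref{eq:nonnegative-normalization} gives $f\ge0$, and $h(0)=e^{0}f(1)=0$. The work is in convexity. The plan is to write $h$ as a perspective of $f$ composed with an affine-in-the-right-sense parametrization, avoiding any differentiability assumption. Indeed, $h(x)=e^{-x}f(e^{2x})=\persp(e^{x},e^{-x})$, since $\persp(u,v)=vf(u/v)$ for $v>0$ and here $u/v=e^{2x}$.

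The key steps, in order, are as follows.
\begin{enumerate}
 \item The perspective $\persp(u,v)=vf(u/v)$ is jointly convex and positively homogeneous on $(0,\infty)^2$. This is standard and was already used implicitly in \eqref{eq:experiment-fdiv}.
 \item Since $f\ge0$, I expect $\persp$ to be nondecreasing along a suitable cone direction. This step is the main obstacle, because $\persp$ is not monotone in each variable separately.
 \item So instead I would verify midpoint convexity directly. For $x,y$ real with midpoint $m$, put $u_1=e^x$, $v_1=e^{-x}$, $u_2=e^y$, $v_2=e^{-y}$. The arithmetic means satisfy
 \[
  \bar u=\frac{e^x+e^y}{2}\ge e^m,\qquad \bar v\ge e^{-m},
 \]
 and $\bar u\bar v\ge1$ by AM--GM. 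Joint convexity gives $\persp(\bar u,\bar v)\le\frac12[h(x)+h(y)]$.
\end{enumerate}

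It then remains to show $h(m)=\persp(e^m,e^{-m})\le\persp(\bar u,\bar v)$. By homogeneity, write $(\bar u,\bar v)=s\,(e^{m'},e^{-m'})$ with $s=\sqrt{\bar u\bar v}\ge1$. This gives $\persp(\bar u,\bar v)=s\,h(m')$, where $m'=\frac12\log(\bar u/\bar v)$.

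The remaining difficulty is that $m'$ need not equal $m$. To handle this I would use a different decomposition: take the convex combination with weights proportional to $v$. With $\theta=v_1/(v_1+v_2)$, the point $\theta\cdot(u_1/v_1)+(1-\theta)\cdot(u_2/v_2)$ is the ratio $\bar u/\bar v$. This only gives $f(\bar u/\bar v)\bar v\le\frac12(h(x)+h(y))$, and it still produces the mismatch $m'\ne m$. Because of this, I would fall back on the smooth route suggested in the outline.

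For the smooth route, first assume $f\in C^2$ and compute
\[
 h''(x)=e^{-x}\bigl[f(e^{2x})+4e^{4x}f''(e^{2x})\bigr],
\]
which is nonnegative since $f\ge0$ and $f''\ge0$. The cross terms cancel: $h'=e^{-x}(2e^{2x}f'-f)$, and differentiating again gives $e^{-x}(f-2e^{2x}f'+4e^{2x}f'+4e^{4x}f''-2e^{2x}f')$, which collapses to the displayed expression.

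For general $f$, regularize by convolution in the logarithmic variable. Set $f_\varepsilon(t)=\int f(te^{s})\kappa_\varepsilon(s)\,ds$ with a smooth mollifier $\kappa_\varepsilon$. Each $t\mapsto f(te^s)$ is convex, so $f_\varepsilon$ is convex, smooth on $(0,\infty)$, and nonnegative. The condition $f_\varepsilon(1)=0$ may fail, but the computation of $h''$ never used it. Then $h_{f_\varepsilon}$ is convex, and $f_\varepsilon\to f$ pointwise on $(0,\infty)$ by continuity of the finite convex $f$ there. Hence $h_{f_\varepsilon}\to h$ pointwise, and pointwise limits of convex functions are convex.

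The step I expect to need care is exactly this approximation. It requires checking that $f_\varepsilon$ is finite, which follows from local boundedness of $f$ on $(0,\infty)$ and compact support of $\kappa_\varepsilon$, and that the second-derivative formula holds with $f_\varepsilon$ in place of $f$ without invoking $f_\varepsilon(1)=0$.
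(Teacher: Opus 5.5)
Your proof is correct and takes the paper's route: the smooth-case identity $h''(x)=e^{-x}\bigl[f(e^{2x})+4e^{4x}f''(e^{2x})\bigr]\ge0$, followed by approximation, so the abandoned perspective attempt can simply be deleted. Your log-scale mollification $f_\varepsilon(t)=\int f(te^{s})\kappa_\varepsilon(s)\,ds$ is a mild simplification of the paper's approximation step, which extends $f$ by supporting rays, mollifies additively and adds a constant to restore nonnegativity on a compact interval; your version keeps convexity and nonnegativity on all of $(0,\infty)$ directly and needs only pointwise convergence.
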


\begin{proof}
Only convexity requires proof. If $f$ is smooth and $t=e^{2x}$, then
\[
 h''(x)=t^{-1/2}\bigl[f(t)+4t^2f''(t)\bigr]\ge0.
\]
For general finite convex $f$, fix a compact interval $J\subset\R$ and
let $I:=\{e^{2x}:x\in J\}\Subset(0,\infty)$. Extend $f$ from a slightly
larger compact interval containing $I$ to a finite convex function on
$\R$ by continuing it with supporting affine rays at the endpoints, and
convolve that extension with standard smooth mollifiers. This produces
smooth convex functions $f_r$ converging uniformly to $f$ on $I$. If
$\varepsilon_r:=\sup_I|f_r-f|$, replace $f_r$ by
$f_r+\varepsilon_r$; it is then nonnegative on $I$ and still converges
uniformly to $f$ there. The corresponding functions
$h_r(x)=e^{-x}f_r(e^{2x})$ are convex on $J$ by the displayed second
derivative formula and converge uniformly to $h$ on $J$. Hence $h$ is
convex on every compact interval, and therefore on $\R$.
\end{proof}

For $\eta\in\mathcal A$, the measure
\[
 M_\eta(\dd x):=\cosh x\,\eta(\dd x)
\]
is a probability law. If $X\sim M_\eta$ and $U=\tanh X$, then
$\E U=\int\sinh x\,\eta(\dd x)=0$. Define
\begin{equation}
 \Phi_f(u)
 :=
 (1-u)f\!\left(\frac{1+u}{1-u}\right),
 \qquad -1<u<1.\label{eq:posterior-Phi}
\end{equation}
It is nonnegative and satisfies $\Phi_f(0)=0$. Convexity follows by
restricting the perspective
\[
 (a,b)\longmapsto b f(a/b),
 \qquad b>0,
\]
to $(a,b)=(1+u,1-u)$. For $b=\theta b_1+(1-\theta)b_2$ and
$\lambda=\theta b_1/b$,
\[
 \frac{\theta a_1+(1-\theta)a_2}{b}
 =
 \lambda\frac{a_1}{b_1}
 +(1-\lambda)\frac{a_2}{b_2},
\]
so Jensen's inequality, multiplied by $b$, proves convexity.

\begin{lemma}[Posterior identities]
\label{lem:posterior-identities}
For $\eta\in\mathcal A$ and the associated centered score $U$,
\begin{equation}
 \Ff(\eta)=\E\Phi_f(U).\label{eq:posterior-energy}
\end{equation}
If $d=1-\eta(\R)$, then
\begin{equation}
 \frac12\E U^2\le d\le\E U^2.\label{eq:posterior-defect}
\end{equation}
\end{lemma}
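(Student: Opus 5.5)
Both identities reduce to direct changes of variables under the law $M_\eta(\dd x)=\cosh x\,\eta(\dd x)$, so the plan is to express everything in terms of $U=\tanh X$.

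For \eqref{eq:posterior-energy}, write $u=\tanh x$. Then
\[
 \frac{1+u}{1-u}=e^{2x},\qquad 1-u=\frac{e^{-x}}{\cosh x}.
\]
It follows that
\[
 \Phi_f(\tanh x)\cosh x=e^{-x}f(e^{2x})=h(x).
\]
Integrating against $\eta$ gives $\E\Phi_f(U)=\int\Phi_f(\tanh x)\cosh x\,\eta(\dd x)=\int h\,\dd\eta=\Ff(\eta)$. Both sides are integrals of nonnegative functions, so the identity holds in $[0,\infty]$.

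For \eqref{eq:posterior-defect}, the aim is to write $d$ as an $M_\eta$-expectation of a function of $U$. Since $M_\eta$ is a probability law,
\[
 d=1-\eta(\R)=\E\Bigl[1-\frac1{\cosh X}\Bigr].
\]
Now use $\cosh x=(1-u^2)^{-1/2}$, so that $1/\cosh X=\sqrt{1-U^2}$ and hence $d=\E\bigl[1-\sqrt{1-U^2}\bigr]$.

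It then suffices to prove the scalar inequality
\[
 \tfrac12 s\le 1-\sqrt{1-s}\le s,\qquad s\in[0,1).
\]
The upper bound follows from $\sqrt{1-s}\ge 1-s$. The lower bound follows from $\sqrt{1-s}\le 1-s/2$, which is concavity of the square root, or equivalently squaring both sides. Applying this with $s=U^2$ and taking expectations gives \eqref{eq:posterior-defect}.

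I expect no real obstacle here. The only points to check are that $\eta\in\mathcal A$ makes $M_\eta$ a probability measure, which holds because $\int\cosh x\,\dd\eta=\tfrac12(1+1)=1$, and that $|U|<1$ almost surely, so that $\Phi_f(U)$ is well defined.
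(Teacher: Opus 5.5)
Your proposal is correct and follows the paper's proof essentially step for step: the identity $\cosh x\,\Phi_f(\tanh x)=h(x)$ gives the first claim, and writing $d=\E[1-\sqrt{1-U^2}]$ under $M_\eta$ reduces the second to the scalar bounds $\tfrac12u^2\le1-\sqrt{1-u^2}\le u^2$. The paper gets both bounds at once from the rationalization $1-\sqrt{1-u^2}=u^2/(1+\sqrt{1-u^2})$, whereas you prove them as two separate inequalities; this difference is only cosmetic.
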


\begin{proof}
Since
\[
 1-\tanh x=\frac{e^{-x}}{\cosh x},
 \qquad
 \frac{1+\tanh x}{1-\tanh x}=e^{2x},
\]
one has $\cosh x\,\Phi_f(\tanh x)=h(x)$, proving
\eqref{eq:posterior-energy}. Also
\[
 d
 =\int(\cosh x-1)\,\eta(\dd x)
 =\E\bigl[1-\sqrt{1-U^2}\bigr].
\]
Now use
\[
 \frac{u^2}{2}
 \le1-\sqrt{1-u^2}
 =\frac{u^2}{1+\sqrt{1-u^2}}
 \le u^2.
\]
\end{proof}

Define the symmetric function
\begin{equation}
 \mathcal M_f(s)
 :=
 \Phi_f(\tanh s)+\Phi_f(-\tanh s),
 \qquad s\ge0.\label{eq:M-profile}
\end{equation}

\begin{lemma}[Symmetric doubling bound]
\label{lem:symmetric-moderation}
The function $\mathcal M_f$ is nondecreasing and
\begin{equation}
 \mathcal M_f(2s)\le2c\,\mathcal M_f(s),
 \qquad s\ge0.\label{eq:M-doubling}
\end{equation}
Consequently, with $\gamma_c:=\log_2(2c)$,
\begin{equation}
 \mathcal M_f(\lambda s)
 \le2c\,\lambda^{\gamma_c}\mathcal M_f(s),
 \qquad \lambda\ge1.\label{eq:M-moderate}
\end{equation}
\end{lemma}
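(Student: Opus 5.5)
The plan is to recognize $\mathcal M_f(s)$ as, up to a factor, the $\Ff$-energy of a symmetric two-atom element of $\mathcal A$. The self-square bound \eqref{eq:self-square} then gives the doubling inequality \eqref{eq:M-doubling} directly. Monotonicity is a separate and elementary convexity fact.

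\emph{Monotonicity.} By the discussion after \eqref{eq:posterior-Phi}, $\Phi_f$ is convex and nonnegative on $(-1,1)$ with $\Phi_f(0)=0$. Hence $g(u):=\Phi_f(u)+\Phi_f(-u)$ is even, convex and minimized at $0$, so it is nondecreasing on $[0,1)$. Since $\tanh$ is increasing on $[0,\infty)$, the function $\mathcal M_f(s)=g(\tanh s)$ is nondecreasing.

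\emph{Doubling.} The identity $\cosh x\,\Phi_f(\tanh x)=h(x)$ from the proof of Lemma~\ref{lem:posterior-identities} gives
\[
 \mathcal M_f(s)=\frac{h(s)+h(-s)}{\cosh s}.
\]
For $s\ge0$, define
\[
 \eta_s:=\frac{\delta_s+\delta_{-s}}{2\cosh s}.
\]
Both $\int e^{x}\,\dd\eta_s$ and $\int e^{-x}\,\dd\eta_s$ equal $1$, so $\eta_s\in\mathcal A$. By Proposition~\ref{prop:hellinger}, $\eta_s=\eta_\mu$ for a positive mean-one law $\mu$; this $\mu$ is the two-point law $\rho_{e^{-2s},e^{2s}}$. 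Its energy is $\Ff(\eta_s)=\tfrac12\mathcal M_f(s)$.

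The convolution is
\[
 \eta_s*\eta_s=\frac{\delta_{2s}+2\delta_0+\delta_{-2s}}{4\cosh^2 s},
\]
and $h(0)=0$, so
\[
 \Ff(\eta_s*\eta_s)=\frac{h(2s)+h(-2s)}{4\cosh^2 s}=\frac{\cosh 2s}{4\cosh^2 s}\,\mathcal M_f(2s).
\]
The correspondence takes $\star$ to $*$, so \eqref{eq:self-square} applied to $\mu$ reads $\Ff(\eta_s*\eta_s)\le c\,\Ff(\eta_s)$. This gives
\[
 \mathcal M_f(2s)\le 2c\,\frac{\cosh^2 s}{\cosh 2s}\,\mathcal M_f(s)\le 2c\,\mathcal M_f(s),
\]
where the last step uses $\cosh 2s=2\cosh^2s-1\ge\cosh^2 s$. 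All quantities are finite because $h$ is finite (Lemma~\ref{lem:h-convex}).

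\emph{Moderate growth.} Given $\lambda\ge1$, choose an integer $k\ge0$ with $2^k\le\lambda<2^{k+1}$. Monotonicity and $k+1$ applications of \eqref{eq:M-doubling} give
\[
 \mathcal M_f(\lambda s)\le\mathcal M_f(2^{k+1}s)\le(2c)^{k+1}\mathcal M_f(s).
\]
Since $(2c)^k=2^{k\gamma_c}\le\lambda^{\gamma_c}$, this is \eqref{eq:M-moderate}.

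The only step that requires care is the choice of $\eta_s$. The symmetric normalization $1/(2\cosh s)$ must put the measure in $\mathcal A$, so that \eqref{eq:self-square} applies. The same choice must also make the convolution square produce exactly $\mathcal M_f(2s)$, with the harmless factor $\cosh 2s/\cosh^2 s$ and a middle atom that vanishes because $h(0)=0$.
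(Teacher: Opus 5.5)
Your proposal is correct and follows the paper's proof: the symmetric two-atom measure $(\delta_s+\delta_{-s})/(2\cosh s)\in\mathcal A$ has energy $\tfrac12\mathcal M_f(s)$, its convolution square has energy $\frac{\cosh 2s}{4\cosh^2 s}\mathcal M_f(2s)$, and the self-square bound \eqref{eq:self-square} together with $\cosh 2s\ge\cosh^2 s$ gives \eqref{eq:M-doubling}. Monotonicity and the dyadic iteration for \eqref{eq:M-moderate} are handled as in the paper, and you have written out the bookkeeping, including $(2c)^k\le\lambda^{\gamma_c}$, that the paper leaves implicit.
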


\begin{proof}
The function $u\mapsto\Phi_f(u)+\Phi_f(-u)$ is even and convex, hence
nondecreasing on $[0,1)$. For $s\ge0$, put
\[
 \omega_s:=\frac{\delta_s+\delta_{-s}}{2\cosh s}\in\mathcal A.
\]
Then
\[
 \Ff(\omega_s)=\frac12\mathcal M_f(s),
 \qquad
 \Ff(\omega_s^{*2})
 =\frac{\cosh(2s)}{4\cosh^2s}\mathcal M_f(2s).
\]
Apply \eqref{eq:self-square} in Hellinger coordinates and use
$\cosh(2s)\ge\cosh^2s$. This proves \eqref{eq:M-doubling}; dyadic
iteration gives \eqref{eq:M-moderate}.
\end{proof}

\begin{lemma}[Reference scale]
\label{lem:canonical-scale}
Put
\begin{equation}
 s_*:=\frac12\log2,
 \qquad
 m_f:=\mathcal M_f(s_*).
 \label{eq:canonical-scale}
\end{equation}
Then
\begin{equation}
 m_f=\frac23f(2)+\frac43f(1/2)>0,
 \label{eq:canonical-scale-formula}
\end{equation}
and hence
\begin{equation}
 f(1/2)\le\frac34m_f,
 \qquad
 f(2)\le\frac32m_f.
 \label{eq:canonical-scale-bounds}
\end{equation}
\end{lemma}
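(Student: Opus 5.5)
Unwinding the definitions gives the formula directly. With $s_*=\tfrac12\log 2$ we have $\tanh s_*=(\sqrt2-1)/(\sqrt2+1)=(2-1)/(2+1)=\tfrac13$, since $\tanh s=(e^{2s}-1)/(e^{2s}+1)$ and $e^{2s_*}=2$. Then, from \eqref{eq:posterior-Phi},
\[
 \Phi_f(\tfrac13)=\tfrac23 f\!\left(\frac{4/3}{2/3}\right)=\tfrac23 f(2),
 \qquad
 \Phi_f(-\tfrac13)=\tfrac43 f\!\left(\frac{2/3}{4/3}\right)=\tfrac43 f(1/2).
\]
Summing these according to \eqref{eq:M-profile} gives $m_f=\tfrac23 f(2)+\tfrac43 f(1/2)$, which is \eqref{eq:canonical-scale-formula}.

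The bounds \eqref{eq:canonical-scale-bounds} follow from the normalization $f\ge0$. Dropping the nonnegative term $\tfrac23 f(2)$ gives $\tfrac43 f(1/2)\le m_f$, that is, $f(1/2)\le\tfrac34 m_f$. Dropping $\tfrac43 f(1/2)$ gives $f(2)\le\tfrac32 m_f$.

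Strict positivity is the only step that uses more than algebra. Suppose $m_f=0$. Since both terms are nonnegative, this forces $f(1/2)=f(2)=0$. Together with $f(1)=0$ and $f\ge0$, convexity then gives $f\equiv0$ on $[1/2,2]$. I would try to extend this vanishing to all of $(0,\infty)$ using hypothesis \eqref{eq:B}, through the symmetric doubling bound of Lemma~\ref{lem:symmetric-moderation}. Iterating \eqref{eq:M-doubling} gives $\mathcal M_f(2^k s_*)\le(2c)^k m_f=0$ for every $k$. Because $\mathcal M_f$ is nondecreasing and $2^k s_*\to\infty$, this yields $\mathcal M_f\equiv0$ on $[0,\infty)$. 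Hence $\Phi_f(\pm\tanh s)=0$ for all $s$, so $\Phi_f$ vanishes on $(-1,1)$. Since $u\mapsto(1+u)/(1-u)$ maps $(-1,1)$ onto $(0,\infty)$ and $1-u>0$, it follows that $f\equiv0$ on $(0,\infty)$. By lower semicontinuity and convexity, $f(0)=\lim_{t\downarrow0}f(t)=0$ as well, so $f$ is the identically zero generator. That case was excluded at the outset as immediate, so for every generator under consideration $m_f>0$.

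I expect this positivity step to be the main point to get right. It is not a consequence of the formula alone, since a generator vanishing on $[1/2,2]$ is perfectly possible in general; the doubling bound \eqref{eq:M-doubling}, which comes from \eqref{eq:B}, is what propagates $m_f=0$ to $f\equiv0$. If that argument were unavailable, the fallback would be to state $m_f>0$ only under $f\not\equiv0$ together with the local hypothesis. Here, however, Lemma~\ref{lem:symmetric-moderation} is available and suffices.
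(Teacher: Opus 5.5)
Your proof is correct and is essentially the paper's argument. The paper iterates \eqref{eq:self-square} directly on $\rho=\tfrac23\delta_{1/2}+\tfrac13\delta_2$ and uses that the zero set of $f$ is an interval. You instead invoke the doubling bound \eqref{eq:M-doubling}, which is proved earlier without using $m_f>0$, together with monotonicity of $\mathcal M_f$. Since $\omega_{s_*}$ is exactly $\rho$ in Hellinger coordinates, the mechanism is the same.

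One cosmetic slip: $(\sqrt2-1)/(\sqrt2+1)=3-2\sqrt2\neq\tfrac13$. The correct intermediate expression is $(\sqrt2-1/\sqrt2)/(\sqrt2+1/\sqrt2)$, and your stated justification via $e^{2s_*}=2$ does give $\tanh s_*=\tfrac13$.
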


\begin{proof}
Since $\tanh s_*=1/3$, the identity in
\eqref{eq:canonical-scale-formula} follows directly from the definition
of $\Phi_f$. Suppose that $m_f=0$. Nonnegativity then gives
$f(1/2)=f(2)=0$. Let
\[
 \rho:=\frac23\delta_{1/2}+\frac13\delta_2.
\]
This is a mean-one law and $\Df(\rho)=0$. Iterating
\eqref{eq:self-square} gives $\Df(\rho^{\star2^r})=0$ for every $r$.
The extreme atoms of this law are $2^{-2^r}$ and $2^{2^r}$, both with
positive mass, so
\[
 f(2^{-2^r})=f(2^{2^r})=0.
\]
The zero set of a nonnegative convex function is an interval; these
intervals exhaust $(0,\infty)$. Lower semicontinuity at zero then gives
$f\equiv0$, contrary to the standing assumption. Thus $m_f>0$, and
\eqref{eq:canonical-scale-bounds} follows from
\eqref{eq:canonical-scale-formula}.
\end{proof}

\section{Comparison away from zero Hellinger distance}\label{app:separated}

Klass's inequalities yield a comparison with an additive error.
The normalization $\E e^{-X_i}=1$ controls cancellation, and a positive
lower bound on the total squared Hellinger distance absorbs the error.

\subsection{Growth of logarithmic transforms}

For $x\ge0$, define
\begin{equation}
 g_+(x):=e^{-x}f(e^x),
 \qquad
 g_-(x):=f(e^{-x}),
 \qquad
 \vartheta(x):=\frac{x}{1+x}.\label{eq:log-profiles}
\end{equation}
Both $g_+$ and $g_-$ are finite, continuous, nonnegative, and
nondecreasing. Indeed, $f$ is nonincreasing on $(0,1]$, while for
$t\ge1$,
\[
 \frac{f(t)}t
 =
 \frac{t-1}{t}\,
 \frac{f(t)-f(1)}{t-1},
\]
and both factors are nonnegative and nondecreasing.

\begin{lemma}[Doubling after bounded correction]
\label{lem:log-moderation}
For every $x\ge0$,
\begin{align}
 g_+(2x)
 &\le2c\,g_+(x)+8c f(1/2)\vartheta(x),
 \label{eq:gplus-doubling}\\
 g_-(2x)
 &\le2c\,g_-(x)+4c f(2)\vartheta(x).
 \label{eq:gminus-doubling}
\end{align}
Define
\begin{equation}
 \Psi_+(x):=g_+(x)+m_f\vartheta(x),
 \qquad
 \Psi_-(x):=g_-(x)+m_f\vartheta(x),
 \label{eq:normalized-profiles}
\end{equation}
and put
\begin{equation}
 D_c:=6c+2,
 \qquad
 \alpha_c:=2\log_2D_c.
 \label{eq:Dc-alphac}
\end{equation}
Then $\Psi_\pm$ are continuous and strictly increasing,
\begin{equation}
 \Psi_\pm(2x)\le D_c\Psi_\pm(x),
 \qquad x\ge0,
 \label{eq:Psi-doubling}
\end{equation}
and consequently
\begin{equation}
 \Psi_\pm(\lambda x)
 \le\lambda^{\alpha_c}\Psi_\pm(x),
 \qquad x\ge0,\quad\lambda\ge2.
 \label{eq:Psi-pm-moderate}
\end{equation}
Moreover,
\begin{equation}
 g_\pm\le\Psi_\pm\le g_\pm+m_f.
 \label{eq:Psi-sandwich}
\end{equation}
\end{lemma}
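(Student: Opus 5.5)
The plan is to get both doubling inequalities from the self-square bound \eqref{eq:self-square}. I apply it to two-point laws $\rho_{r,t}$ from \eqref{eq:local-two-point-law} with one atom at $e^{\pm x}$ and the other at a fixed reference point, $1/2$ or $2$. The fixed reference point is what generates the correction term $f(1/2)\vartheta(x)$ or $f(2)\vartheta(x)$. The symmetric profile $\mathcal M_f$ of Lemma~\ref{lem:symmetric-moderation} is not suitable here: it would only bound $g_+(2x)$ by $g_+(x)+g_-(x)$, and $g_-$ need not be small relative to $g_+$.

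\emph{Upper profile.} Take $t=e^x>1$ and $r=1/2$, so that $a=(t-1)/(t-\tfrac12)$ and $b=\tfrac12/(t-\tfrac12)$. The law $\rho_{r,t}^{\star2}$ has an atom at $t^2$ of mass $b^2$. By nonnegativity of $f$ and \eqref{eq:self-square},
\[
 b^2f(t^2)\le\Df(\rho_{r,t}^{\star2})\le c\bigl[af(1/2)+bf(t)\bigr].
\]
I then compare each term with the target quantities.
\begin{itemize}
\item The left side satisfies $b^2f(t^2)=\tfrac14 t^2(t-\tfrac12)^{-2}g_+(2x)\ge\tfrac14g_+(2x)$.
\item The last term satisfies $bf(t)=f(t)/(2t-1)\le f(t)/t=g_+(x)$.
\item The coefficient satisfies $a=(e^x-1)/(e^x-\tfrac12)\le C\vartheta(x)$ with an absolute $C$. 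Both sides are of order $x$ near $0$, both are bounded as $x\to\infty$, and an elementary check such as $e^x-1\le x e^x$ handles the middle range.
\end{itemize}
This gives $g_+(2x)\le 4c\,g_+(x)+4cC f(1/2)\vartheta(x)$.

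\emph{Lower profile.} Symmetrically, take $r=e^{-x}$ and $t=2$, so $a=1/(2-r)\in[\tfrac12,1]$ and $b=(1-r)/(2-r)$. The atom at $r^2$ has mass $a^2$, and
\[
 a^2f(r^2)\le c\bigl[af(r)+bf(2)\bigr].
\]
Here $a^2\ge\tfrac14$, $af(r)\le g_-(x)$, and $b\le1-e^{-x}\le C'\vartheta(x)$. This gives the analogue of \eqref{eq:gminus-doubling}.

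The main obstacle I expect is purely one of constants. This crude bookkeeping produces $4c$ rather than $2c$ in front of $g_\pm(x)$. To reach the stated $2c$, $8c$ and $4c$, I would first optimize the reference point and the lower bound on $b^2$. Failing that, I would mix $\rho_{r,t}$ with $\delta_1$ and use \eqref{eq:two-square}. Any constant of the form $O(c)$ would still suffice downstream if $D_c$ were adjusted accordingly.

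For the remaining claims, continuity and monotonicity of $g_\pm$ were noted before the lemma, and $\vartheta$ is continuous and strictly increasing with $m_f>0$ by Lemma~\ref{lem:canonical-scale}. Hence $\Psi_\pm$ is continuous and strictly increasing, and \eqref{eq:Psi-sandwich} follows from $0\le\vartheta<1$. For \eqref{eq:Psi-doubling}, I use $\vartheta(2x)\le2\vartheta(x)$ together with \eqref{eq:canonical-scale-bounds}:
\[
 \Psi_+(2x)\le2c\,g_+(x)+\bigl(8c\cdot\tfrac34+2\bigr)m_f\vartheta(x)\le(6c+2)\Psi_+(x),
\]
and for $\Psi_-$ the correction term is $(4c\cdot\tfrac32+2)m_f\vartheta(x)$, which again gives $D_c$. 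Finally, for $\lambda\ge2$ choose $k\ge1$ with $2^{k-1}<\lambda\le2^k$. Monotonicity and iteration give $\Psi_\pm(\lambda x)\le D_c^k\Psi_\pm(x)$. Since $k<\log_2\lambda+1\le2\log_2\lambda$, we get $D_c^k\le\lambda^{2\log_2D_c}=\lambda^{\alpha_c}$, which is \eqref{eq:Psi-pm-moderate}.
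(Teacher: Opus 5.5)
\textbf{Gap: the stated constants are not established.} Your construction is the paper's. You use the laws $a\delta_{1/2}+b\delta_t$ and $a\delta_r+b\delta_2$, bound the extreme atom of the square by \eqref{eq:self-square}, and then do the same $\Psi_\pm$ bookkeeping. Continuity, strict monotonicity, the sandwich and the iteration to \eqref{eq:Psi-pm-moderate} are all correct. But as written you do not prove \eqref{eq:gplus-doubling}--\eqref{eq:gminus-doubling}. You obtain $4c$ in front of $g_\pm(x)$ and leave the correction constants $C,C'$ unspecified. You then derive \eqref{eq:Psi-doubling} from the stated constants, not from what you proved. Since $D_c$ and $\alpha_c$ are fixed explicitly and reused (e.g.\ in \eqref{eq:Psi-eight}), this matters.

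\textbf{Where the factor $2$ is lost, and the fix.} You bound $b^2f(t^2)\ge\frac14g_+(2x)$ and $bf(t)\le g_+(x)$ separately. These two bounds are tight at opposite ends, the first as $t\to\infty$ and the second as $t\to1$. Instead, divide the atom inequality by $b^2t^2$. Since $(bt)^{-1}=2-1/t\le2$ and
\[
a(bt)^{-2}=2(1-1/t)(2-1/t)\le4(1-e^{-x})\le8\vartheta(x),
\]
using $1-e^{-x}\le\min\{x,1\}\le2\vartheta(x)$, you get \eqref{eq:gplus-doubling} exactly. For the lower profile, divide by $a^2$. Then $1/a=2-r\le2$ and $b/a^2=(1-r)(2-r)\le2(1-e^{-x})\le4\vartheta(x)$, which gives \eqref{eq:gminus-doubling}. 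Your suggested remedies are off target. Moving the reference point would replace $f(1/2)$ and $f(2)$ by other values, and \eqref{eq:two-square} is not needed.

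\textbf{Why the leading $4c$ alone is not the problem.} A leading $4c$ would still give \eqref{eq:Psi-doubling} with the same $D_c$, since $4c\le6c+2$. That holds only if the corrections stay at $8cf(1/2)\vartheta(x)$ and $4cf(2)\vartheta(x)$. For that you need the sharp bounds $a=(e^x-1)/(e^x-\frac12)\le2\vartheta(x)$ and $b=(1-e^{-x})/(2-e^{-x})\le\vartheta(x)$. Your chain $b\le1-e^{-x}\le C'\vartheta(x)$ forces $C'>1$, since already $1-e^{-1}>\frac12=\vartheta(1)$. The check via $e^x-1\le xe^x$ only yields $a\le\min\{2x,1\}$, which needs $C\ge3$. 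Either loss pushes the $\Psi_\pm$ doubling constant above $6c+2$.
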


\begin{proof}
Put $t=e^x\ge1$, $\lambda=(2t-1)^{-1}$, and
\[
 \rho=\lambda\delta_t+(1-\lambda)\delta_{1/2}.
\]
This law has mean one. The $t^2$ atom of $\rho^{\star2}$ and
\eqref{eq:self-square} give
\[
 \lambda^2f(t^2)
 \le c\bigl[\lambda f(t)+(1-\lambda)f(1/2)\bigr].
\]
After division by $\lambda^2t^2$, use
\[
 \frac1{\lambda t}=2-\frac1t\le2,
 \qquad
 \frac{1-\lambda}{\lambda^2t^2}
 =\frac{2(t-1)(2t-1)}{t^2}
 \le4(1-e^{-x}),
\]
and
$1-e^{-x}\le\min\{x,1\}\le2x/(1+x)$, proving
\eqref{eq:gplus-doubling}.

For $t=e^{-x}\le1$, take $\lambda=(2-t)^{-1}$ and
$\rho=\lambda\delta_t+(1-\lambda)\delta_2$. Then
\[
 \frac1\lambda\le2,
 \qquad
 \frac{1-\lambda}{\lambda^2}
 =(1-t)(2-t)
 \le2(1-e^{-x})\le4\vartheta(x),
\]
which proves \eqref{eq:gminus-doubling}. By
Lemma~\ref{lem:canonical-scale},
\[
 8c f(1/2)\le6c m_f,
 \qquad
 4c f(2)\le6c m_f.
\]
Since $\vartheta(2x)\le2\vartheta(x)$,
\[
 \Psi_\pm(2x)
 \le2c\,g_\pm(x)+(6c+2)m_f\vartheta(x)
 \le D_c\Psi_\pm(x).
\]
The functions are strictly increasing because $m_f>0$ and
$\vartheta$ is strictly increasing. If $\lambda\ge2$ and
$r=\lceil\log_2\lambda\rceil$, then
\[
 \Psi_\pm(\lambda x)
 \le D_c^r\Psi_\pm(x)
 \le D_c\lambda^{\log_2D_c}\Psi_\pm(x)
 \le\lambda^{2\log_2D_c}\Psi_\pm(x),
\]
This proves \eqref{eq:Psi-pm-moderate};
\eqref{eq:Psi-sandwich} follows from $0\le\vartheta\le1$.
\end{proof}

\subsection{Klass's moment inequalities}

We use Klass~\cite[definitions on pp.~414--415, Theorem~3.1 on
p.~417, and Lemma~6.1 on p.~422]{Klass1981}, with the family of
partial sums reduced to the full sum.
Let $\Psi:[0,\infty)\to[0,\infty)$ be continuous and strictly
increasing, with $\Psi(0)=0$, and suppose that
\begin{equation}
 \Psi(\lambda x)\le\lambda^\alpha\Psi(x),
 \qquad x\ge0,\quad\lambda\ge2.\label{eq:Psi-moderate}
\end{equation}
Extend $\Psi$ evenly to $\R$. For independent real variables
$Z_1,\ldots,Z_N$, $y>0$, and $a\in\R$, define
$K_{\mathbf Z}(y)$ by the equation
\begin{equation}
 y\sum_{i=1}^N
 \E\!\bigl[Z_i^2\one_{\{|Z_i|\le k\}}\bigr]
 +
 y\frac{k^2}{\Psi(k)}
 \sum_{i=1}^N
 \E\!\bigl[\Psi(|Z_i|)\one_{\{|Z_i|>k\}}\bigr]
 =k^2,\label{eq:K-functional}
\end{equation}
with the standard endpoint conventions, and set
\begin{equation}
 M_{\mathbf Z}(y,a)
 :=
 \sup\left\{
  u\ge0:
  y\left|
   a+\sum_{i=1}^N\E[Z_i\one_{\{|Z_i|\le u\}}]
  \right|\ge u
 \right\}.\label{eq:M-functional}
\end{equation}
Klass proves that there is $B_\alpha<\infty$, depending only on
$\alpha$, such that
\begin{equation}
 \E\Psi\!\left(\left|a+\sum_{i=1}^NZ_i\right|\right)
 \le
 B_\alpha\,
 \Psi\!\left(K_{\mathbf Z}(1)\vee M_{\mathbf Z}(1,a)\right),\label{eq:Klass-upper}
\end{equation}
and
\begin{equation}
 \Psi(M_{\mathbf Z}(1,a))
 \le
 \max\left\{
  \Psi(K_{\mathbf Z}(2)),
  2^{\alpha+2}
  \E\Psi\!\left(\left|a+\sum_{i=1}^NZ_i\right|\right)
 \right\}.\label{eq:Klass-M}
\end{equation}
Also $K_{\mathbf Z}(y)$ is nondecreasing in $y$.

\subsection{Homogenization of logarithmic sums}

\begin{theorem}[Comparison with the average law]
\label{thm:exp-regular-cover}
For every $\alpha>0$ there is a finite constant $R_\alpha$, depending
only on $\alpha$, with the following property. Let
$\Psi:[0,\infty)\to[0,\infty)$ be continuous and strictly increasing,
with $\Psi(0)=0$, and suppose that
\eqref{eq:Psi-moderate} holds. Let
$X_1,\ldots,X_N$ be independent finitely supported real variables such
that
\begin{equation}
 \E e^{-X_i}=1,
 \qquad 1\le i\le N.
 \label{eq:exp-normalization}
\end{equation}
Let $\widetilde X_1,\ldots,\widetilde X_N$ be iid with common law
\[
 \frac1N\sum_{i=1}^N\Law(X_i).
\]
Then
\begin{equation}
 \E\Psi\!\left(\pos{\sum_{j=1}^N\widetilde X_j}\right)
 \le
 R_\alpha\left[
  \Psi(8)+
  \E\Psi\!\left(\pos{\sum_{i=1}^NX_i}\right)
 \right].
 \label{eq:exp-RC}
\end{equation}
The same $R_\alpha$ works for every admissible $\Psi$.
\end{theorem}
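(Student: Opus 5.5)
The approach is to exploit the fact that both Klass functionals $K_{\mathbf Z}$ and $M_{\mathbf Z}$ in \eqref{eq:K-functional}--\eqref{eq:M-functional} depend on the summands only through the sums $\sum_i\E[g(Z_i)]$ for suitable test functions $g$. Such sums are unchanged when the summand laws are replaced by their average. Write $S=\sum_iX_i$, $\widetilde S=\sum_j\widetilde X_j$, $\mathbf X=(X_i)$ and $\widetilde{\mathbf X}=(\widetilde X_j)$. For every test function $g$,
\[
 \sum_{j=1}^N\E g(\widetilde X_j)=N\cdot\frac1N\sum_{i=1}^N\E g(X_i)=\sum_{i=1}^N\E g(X_i).
\]
Applying this with $g(z)=z^2\one_{\{|z|\le k\}}$, $g(z)=\Psi(|z|)\one_{\{|z|>k\}}$ and $g(z)=z\one_{\{|z|\le u\}}$ gives $K_{\widetilde{\mathbf X}}(y)=K_{\mathbf X}(y)$ and $M_{\widetilde{\mathbf X}}(y,a)=M_{\mathbf X}(y,a)$ for all $y,a$. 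Take $a=0$ throughout.

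\emph{Step 1: upper bound for the homogenized sum.} The variables $\widetilde X_j$ are finitely supported, and $\Psi$ satisfies \eqref{eq:Psi-moderate}. Bounding $\pos{\cdot}\le|\cdot|$ and applying \eqref{eq:Klass-upper} to $\widetilde{\mathbf X}$ then gives
\[
 \E\Psi(\pos{\widetilde S})\le B_\alpha\Psi\bigl(K_{\mathbf X}(1)\vee M_{\mathbf X}(1,0)\bigr).
\]
The right side now involves only the original family.

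\emph{Step 2: lower bound by the original absolute moment.} I would show that $\Psi(K_{\mathbf X}(1)\vee M_{\mathbf X}(1,0))\le C_\alpha\E\Psi(|S|)$. For the $M$ term this is \eqref{eq:Klass-M}, provided $\Psi(K_{\mathbf X}(2))$ is controlled. For that, note first that $K(2)\le 2K(1)$ up to a constant: from \eqref{eq:K-functional}, replacing $y$ by $2y$ and using the moderate growth of $\Psi$ changes the root $k$ by a bounded factor. Hence $\Psi(K(2))\le C_\alpha\Psi(K(1))$ by \eqref{eq:Psi-moderate}. It remains to bound $\Psi(K(1))\lesssim\E\Psi(|S|)$. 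This is the lower half of Klass's two-sided Theorem~3.1, and I would invoke it from \cite{Klass1981}. As a fallback, one can symmetrize, $S-S'$, for which the truncated means vanish. The functional $K$ of the symmetrized vector is comparable to that of the original. Moreover $\E\Psi(|S-S'|)\le 2^{\alpha}\bigl(\E\Psi(|S|)+\E\Psi(|S'|)\bigr)$, using \eqref{eq:Psi-moderate} and the doubling of $\Psi$ from below.

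\emph{Step 3: the negative part via the exponential normalization.} By independence and \eqref{eq:exp-normalization}, $\E e^{-S}=1$. Markov's inequality gives $\Pp(S\le-u)\le e^{-u}$. Splitting at $u=8$ and using $\Psi(u)\le(u/8)^{\alpha}\Psi(8)$ for $u\ge16$ (from \eqref{eq:Psi-moderate}), together with monotonicity on $[8,16]$, yields
\[
 \E\Psi(S_-)\le\Psi(8)+\sum_{k\ge0}\Psi(2^{k+4})e^{-2^{k+3}}\le C_\alpha\Psi(8).
\]
Since $\Psi(|S|)=\Psi(\pos S)+\Psi(S_-)$, this gives $\E\Psi(|S|)\le\E\Psi(\pos S)+C_\alpha\Psi(8)$. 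Chaining Steps 1--3 proves \eqref{eq:exp-RC}. Every constant depends only on $\alpha$, through $B_\alpha$, $2^{\alpha+2}$ and the dyadic sums, so $R_\alpha$ is uniform over admissible $\Psi$.

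The main obstacle is Step 2: the lower bound $\Psi(K_{\mathbf X}(1))\lesssim\E\Psi(|S|)$. It is not among the two displayed Klass inequalities, and it is exactly where cancellation in a non-symmetric sum could a priori make $|S|$ small while $K$ is large. I would first try to extract it directly from Klass's Theorem~3.1, which is two-sided, and otherwise use the symmetrization route. I also need to verify carefully the scaling $K(2)\lesssim K(1)$ from the defining equation \eqref{eq:K-functional}.
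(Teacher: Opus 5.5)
Your Steps 1 and 3 match the paper. Step 1 uses the equality of the Klass functionals under averaging, the upper bound \eqref{eq:Klass-upper} and \eqref{eq:Klass-M}; Step 3 is the exponential tail bound for $S^-$. You are right that Step 2 is the obstacle, but neither of your routes to $\Psi(K_{\mathbf X})\lesssim\E\Psi(|S|)$ works.

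\emph{Klass's lower bound.} Klass's two-sided estimate concerns the maximum of the partial sums, and only its upper half passes to the full sum. For the full sum of general independent summands the lower half is false: with $X_1\equiv b$ and $X_2\equiv-b$ one has $S=0$ but $K_{\mathbf X}(1)=\sqrt2\,b$. This is why the paper imports only \eqref{eq:Klass-upper} and Klass's Lemma~6.1.

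\emph{Symmetrization.} It fails for a related reason: $K$ sees the truncated means, and $S-S'$ discards them. Your claim that the two $K$-functionals are comparable is false even under \eqref{eq:exp-normalization}. Take $\Psi(x)=x$ and $N\ge2$ iid copies of $X$, with $X=\mu$ with probability $1-p$ and $X=-L$ with probability $p=(1-e^{-\mu})/(e^{L}-e^{-\mu})$. Then $K_{\mathbf X}(1)\to\mu\sqrt N$ as $L\to\infty$. But $\E|S-S'|$ and the $K$-functional of $(X_i-X_i')$ are both at most $2Np(\mu+L)\to0$.

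\emph{The scaling $K(2)\le C\,K(1)$.} This is also false for flat $\Psi$. Take one summand with $\Pp(|Z|=m)=0.6=1-\Pp(Z=0)$ and $\Psi(x)=x/(1+x)$. Then $K(1)\to3/2$ as $m\to\infty$, while $K(2)=\sqrt{1.2}\,m$. The detour is unnecessary anyway: $K(1)\le K(2)$, so one can bound $\Psi(K(2))$ directly.

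\emph{What is missing.} The normalization \eqref{eq:exp-normalization} must be used inside Step 2, not only for $S^-$. It is the only thing ruling out the cancellations above, and it costs an additive $\Psi(8)$. The paper's argument runs as follows.
\begin{enumerate}
 \item Set $b=K_{\mathbf X}(2)$. If $b<8$, the bound $\Psi(b)\le\Psi(8)$ is trivial.
 \item For $t\ge b\ge8$, prove $\sum_i\Pp(|X_i|>t)\le4\Pp(|S|>t/4)$. Since $\E e^{-(S-X_i)}=1$, Markov's inequality gives $\Pp(S-X_i<-3t/4)\le e^{-3t/4}$, so a large positive jump survives in $S$. Negative jumps are handled by a case distinction and Bonferroni, using $\sum_i\Pp(|X_i|>b)\le1/2$.
 \item Integrating against $\Psi$ bounds the tail term of the defining equation by $T_b\le4^{\alpha+1}\E\Psi(|S|)$. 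This settles the case where $T_b$ is not small compared with $\Psi(b)$.
 \item If $T_b$ is a small fraction of $\Psi(b)$, the truncated second moment is at least $b^2/4$. The inequality $e^{-x}\ge1-x$ gives $\sum_i\bigl(\E[X_i\one_{\{|X_i|\le b\}}]\bigr)^-\le\sum_i\Pp(|X_i|>b)$, so the truncated means cannot cancel.
 \item Hence either the centered truncated variance or the total truncated mean is of order $b$. A fourth-moment Paley--Zygmund bound then gives $\Pp(|S|\ge c_1b)\ge c_2/2$, and so $\Psi(b)\le C_\alpha\E\Psi(|S|)$.
\end{enumerate}
Without an argument of this kind, which uses the exponential normalization at the level of $K_{\mathbf X}(2)$, your Steps 1 and 3 do not close.
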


\begin{proof}
Suppose first that $\Psi$ is unbounded. Write
\[
 S:=\sum_{i=1}^NX_i,
 \qquad
 \widetilde S:=\sum_{j=1}^N\widetilde X_j.
\]
For every nonnegative Borel function $H$, and for every real Borel
function $H$ integrable under all the laws involved,
\begin{equation}
 \sum_{j=1}^N\E H(\widetilde X_j)
 =
 \sum_{i=1}^N\E H(X_i).
 \label{eq:aggregate-marginals}
\end{equation}
Hence
\begin{equation}
 K_{\widetilde{\mathbf X}}(y)=K_{\mathbf X}(y),
 \qquad
 M_{\widetilde{\mathbf X}}(y,0)=M_{\mathbf X}(y,0).
 \label{eq:KM-equality}
\end{equation}
Equations \eqref{eq:Klass-upper}, \eqref{eq:Klass-M}, and monotonicity
of $K$ therefore give
\begin{equation}
 \E\Psi(|\widetilde S|)
 \le
 B_\alpha
 \max\left\{
  \Psi(K_{\mathbf X}(2)),
  2^{\alpha+2}\E\Psi(|S|)
 \right\}.
 \label{eq:RC-radial}
\end{equation}

Put $b:=K_{\mathbf X}(2)$. If $b<8$, then
$\Psi(b)\le\Psi(8)$. Assume henceforth that $b\ge8$, and define
\begin{equation}
 A_b:=\sum_{i=1}^N\E[X_i^2\one_{\{|X_i|\le b\}}],
 \qquad
 T_b:=\sum_{i=1}^N\E[\Psi(|X_i|)\one_{\{|X_i|>b\}}].
 \label{eq:Ab-Tb}
\end{equation}
By definition of $b$,
\begin{equation}
 A_b+\frac{b^2}{\Psi(b)}T_b=\frac{b^2}{2}.
 \label{eq:K2-identity}
\end{equation}
Let
\[
 q(t):=\sum_{i=1}^N\Pp(|X_i|>t).
\]
Then
\begin{equation}
 q(b)\le\frac{T_b}{\Psi(b)}\le\frac12.
 \label{eq:q-half}
\end{equation}

For $t\ge b$, we claim that
\begin{equation}
 q(t)\le4\Pp(|S|>t/4).
 \label{eq:tail-comparison}
\end{equation}
Write $R_i=S-X_i$. Independence and
\eqref{eq:exp-normalization} give $\E e^{-R_i}=1$. Let
$q^\pm(t)=\sum_i\Pp(\pm X_i>t)$. For the positive tail, put
\[
 E_i^+:=\{X_i>t,\ R_i\ge-3t/4\}.
\]
Markov's inequality gives
$\Pp(R_i<-3t/4)\le e^{-3t/4}\le1/4$, and hence
\[
 \sum_i\Pp(E_i^+)\ge\frac34q^+(t).
\]
Moreover,
$E_i^+\cap E_j^+\subseteq\{X_i>t,X_j>t\}$, so independence and
\eqref{eq:q-half} imply
\[
 \sum_{i<j}\Pp(E_i^+\cap E_j^+)
 \le\frac12q^+(t)^2\le\frac14q^+(t).
\]
Bonferroni's inequality therefore gives
\begin{equation}
 \Pp(S>t/4)\ge\frac12q^+(t).
 \label{eq:positive-tail}
\end{equation}

For the negative tail, suppose first that
$\Pp(R_i>3t/4)>1/2$ for some $i$. Since
$\E e^{-X_i}=1$,
$\Pp(X_i<-\log2)\le1/2$. Independence and $t\ge8$ yield
\[
 \Pp(S>t/2)
 \ge
 \Pp(R_i>3t/4,\ X_i\ge-\log2)>\frac14.
\]
Together with $q(t)\le1/2$, this proves \eqref{eq:tail-comparison} in
this case. Otherwise
$\Pp(R_i>3t/4)\le1/2$ for every $i$. Put
\[
 E_i^-:=\{X_i<-t,\ R_i\le3t/4\}.
\]
Then
\[
 \sum_i\Pp(E_i^-)\ge\frac12q^-(t),
 \qquad
 \sum_{i<j}\Pp(E_i^-\cap E_j^-)
 \le\frac12q^-(t)^2\le\frac14q^-(t),
\]
so
\begin{equation}
 \Pp(S<-t/4)\ge\frac14q^-(t).
 \label{eq:negative-tail}
\end{equation}
Equations \eqref{eq:positive-tail} and \eqref{eq:negative-tail} prove
\eqref{eq:tail-comparison}.

Integration against $\dd\Psi$ and \eqref{eq:tail-comparison} give
\begin{align}
 T_b
 &=
 \Psi(b)q(b)+\int_{(b,\infty)}q(t)\,\dd\Psi(t)\notag\\
 &\le4\E\Psi(4|S|)
 \le4^{\alpha+1}\E\Psi(|S|).
 \label{eq:Tb-tail}
\end{align}

Fix a numerical $q_0\in(0,1/4)$, to be chosen below. If
$T_b\ge q_0\Psi(b)$, then
\begin{equation}
 \Psi(b)
 \le\frac{4^{\alpha+1}}{q_0}\E\Psi(|S|).
 \label{eq:scale-tail-case}
\end{equation}
Suppose instead that $T_b<q_0\Psi(b)$. Then
\begin{equation}
 q(b)<q_0,
 \qquad
 A_b>\left(\frac12-q_0\right)b^2\ge\frac14b^2.
 \label{eq:variance-reserve}
\end{equation}
Set
\[
 Z_i:=X_i\one_{\{|X_i|\le b\}},
 \quad
 a_i:=\E Z_i,
 \quad
 a:=\sum_i a_i,
 \quad
 W:=\sum_i(Z_i-a_i),
 \quad
 v^2:=\E W^2.
\]
Since $e^{-x}\ge1-x$,
\[
 1=\E e^{-X_i}
 \ge\E[(1-X_i)\one_{\{|X_i|\le b\}}],
\]
and hence
\begin{equation}
 \sum_i a_i^-\le q(b)<q_0.
 \label{eq:negative-mean-budget}
\end{equation}
Also
\begin{equation}
 A_b=v^2+\sum_i a_i^2.
 \label{eq:variance-mean-split}
\end{equation}
If $v^2\ge A_b/2$, then $v^2\ge b^2/8$. Otherwise
$\sum_i a_i^2\ge b^2/8$. By
\eqref{eq:negative-mean-budget}, for $b\ge8$ and $q_0\le1/4$,
\[
 \sum_i(a_i^+)^2\ge\frac{b^2}{16},
 \qquad
 \sum_i a_i^+\ge\frac b4,
 \qquad
 a\ge\frac b8.
\]
Thus, with $Y:=a+W=\sum_iZ_i$,
\begin{equation}
 m_2:=\E Y^2=a^2+v^2\ge\frac{b^2}{64}.
 \label{eq:Y-second}
\end{equation}
The centered summands $Z_i-a_i$ are bounded by $2b$, so
\begin{equation}
 \E W^4\le4b^2v^2+3v^4.
 \label{eq:W-fourth}
\end{equation}
Since $a^2,v^2\le m_2$ and $b^2\le64m_2$, the inequality
$(a+W)^4\le8a^4+8W^4$ gives
\begin{equation}
 \E Y^4\le C_*m_2^2
 \label{eq:Y-fourth}
\end{equation}
for a numerical $C_*$. Moreover,
\[
 \E[Y^2\one_{\{Y^2\ge m_2/2\}}]\ge m_2/2.
\]
Cauchy--Schwarz therefore gives
\[
 \Pp(Y^2\ge m_2/2)
 \ge\frac{m_2^2}{4\E Y^4}
 \ge\frac1{4C_*}.
\]
Hence there are numerical constants $c_1,c_2>0$ such that
\begin{equation}
 \Pp(|Y|\ge c_1b)\ge c_2.
 \label{eq:Y-large}
\end{equation}
Choose $q_0<\min\{1/4,c_2/2\}$. On the event
$\bigcap_i\{|X_i|\le b\}$ one has $S=Y$, while the complementary event
has probability below $q_0$. Therefore
\[
 \Pp(|S|\ge c_1b)\ge c_2/2.
\]
Let $\lambda_1:=\max\{2,c_1^{-1}\}$. Monotonicity and
\eqref{eq:Psi-moderate} give
\[
 \Psi(b)\le\Psi(\lambda_1c_1b)
 \le\lambda_1^\alpha\Psi(c_1b)
 \le\frac{2\lambda_1^\alpha}{c_2}\E\Psi(|S|).
\]
Combining the two cases with the bound for $b<8$, we obtain
$C_{1,\alpha}<\infty$, depending only on $\alpha$, such that
\begin{equation}
 \Psi(K_{\mathbf X}(2))
 \le C_{1,\alpha}\bigl[\Psi(8)+\E\Psi(|S|)\bigr].
 \label{eq:K2-reserve}
\end{equation}

Finally, \eqref{eq:exp-normalization} gives $\E e^{-S}=1$, and hence
\[
 \Pp(S^->t)\le e^{-t}.
\]
Thus $S^-$ is stochastically dominated by an exponential random
variable $E$ of mean one. For $0\le x\le16$,
$\Psi(x)\le\Psi(16)\le2^\alpha\Psi(8)$, while for $x>16$,
\[
 \Psi(x)\le(x/8)^\alpha\Psi(8).
\]
Consequently there is $C_{2,\alpha}<\infty$, depending only on
$\alpha$, such that
\begin{equation}
 \E\Psi(S^-)
 \le\E\Psi(E)
 \le C_{2,\alpha}\Psi(8).
 \label{eq:negative-reserve}
\end{equation}
Since
$\Psi(|S|)=\Psi(S^+)+\Psi(S^-)$, insertion of
\eqref{eq:K2-reserve} and \eqref{eq:negative-reserve} into
\eqref{eq:RC-radial} proves \eqref{eq:exp-RC}, with a constant
$R_\alpha$ depending only on $\alpha$.

If $\Psi$ is bounded, apply the unbounded result to
\[
 \Psi_\varepsilon(x):=\Psi(x)+\varepsilon x^\alpha,
 \qquad \varepsilon>0.
\]
The same growth bound holds, and finite support permits passage to the
limit $\varepsilon\downarrow0$ with unchanged $R_\alpha$.
\end{proof}

\subsection{Divergence comparison}

\begin{theorem}[Comparison with additive error]
\label{thm:additive-RC}
Let $D_c$ and $\alpha_c$ be as in \eqref{eq:Dc-alphac}, and let
$R_{\alpha_c}$ be the universal constant from
Theorem~\ref{thm:exp-regular-cover}. Every positive finitely supported
mean-one family satisfies
\begin{equation}
 \Df\!\left[
  \left(\frac1n\sum_{i=1}^n\mu_i\right)^{\star n}
 \right]
 \le
 R_{\alpha_c}\left[
  \Df(\mu_1\star\cdots\star\mu_n)
  +(4D_c^4+2)m_f
 \right].
 \label{eq:additive-RC}
\end{equation}
\end{theorem}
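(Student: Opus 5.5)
We will reduce \eqref{eq:additive-RC} to Theorem~\ref{thm:exp-regular-cover}, applied twice, once for each tail of the log-likelihood.

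\textbf{Setup.} Let $L_i\sim\mu_i$ be independent, and let $\widetilde L_j$ be iid with law $\bar\mu=n^{-1}\sum_i\mu_i$. Put $Y_i=\log L_i$ and $\widetilde Y_j=\log\widetilde L_j$. Write $S=\sum_iY_i$ and $\widetilde S=\sum_j\widetilde Y_j$. Since $f$ is nonincreasing on $(0,1]$ and nondecreasing on $[1,\infty)$, we split $\E f(e^{S})$ into the events $\{S\ge0\}$ and $\{S<0\}$, and treat each part under a different measure.

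\textbf{Upper part.} Change measure by the tilt $\prod_iL_i$, i.e.\ pass to $P$ in the likelihood representation. Then
\[
 \E f(e^S)\one_{\{S\ge0\}}=\E_P\bigl[g_+(S)\one_{\{S\ge0\}}\bigr]=\E_P\,g_+(\pos{S}),
\]
using $g_+(0)=0$. Under $P$ the summands $X_i:=Y_i$ are independent with $\E_Pe^{-X_i}=\E L_i\cdot L_i^{-1}=1$, which is \eqref{eq:exp-normalization}. The crucial check is that the tilted law of $\widetilde Y_j$ is the average of the tilted laws of $Y_i$. This holds because tilting $\bar\mu$ by $t$ is linear in the law and each $\mu_i$ has mean one. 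So Theorem~\ref{thm:exp-regular-cover} applies to $\widetilde X_j$ under $P$. Apply it with $\Psi=\Psi_+$ and $\alpha=\alpha_c$, which is admissible by Lemma~\ref{lem:log-moderation}: $\Psi_+$ is continuous, strictly increasing, vanishes at $0$, and satisfies \eqref{eq:Psi-pm-moderate}.

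\textbf{Lower part.} Work directly under $Q$, with $X_i:=-Y_i$. Then $\E e^{-X_i}=\E L_i=1$, and averaging of laws is trivially preserved. We have
\[
 \E f(e^S)\one_{\{S<0\}}\le\E\,g_-(\pos{-S}),
\]
and we apply the theorem with $\Psi_-$.

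\textbf{Assembling the bound.} Use the sandwich \eqref{eq:Psi-sandwich} in the form $\Psi_\pm\le g_\pm+m_f$. Then
\[
 \E g_+(\pos{\widetilde S})+\E g_-(\pos{-\widetilde S})
 \le R_{\alpha_c}\bigl[\Psi_+(8)+\Psi_-(8)+\Df(\mu_1\star\cdots\star\mu_n)+2m_f\bigr].
\]
The expectations on the right are taken under the respective measures. Their sum involves both $\E_Pg_+(\pos S)$ and $\E g_-(\pos{-S})$, and this sum is exactly $\Df(\star_i\mu_i)$.

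\textbf{Expected main obstacle: the constant term.} It remains to bound $\Psi_\pm(8)$ by $2D_c^4m_f$ each. Start from the reference scale $s_*=\tfrac12\log2$, i.e.\ $x=\log 2$. By \eqref{eq:canonical-scale-bounds},
\[
 \Psi_+(\log2)=f(2)/2+m_f\vartheta(\log2)\le 2m_f,
\]
and likewise $\Psi_-(\log 2)\le f(1/2)+m_f\le 2m_f$. The doubling \eqref{eq:Psi-doubling} gives $\Psi_\pm(2^k\log2)\le D_c^k\Psi_\pm(\log 2)$. Since $16\log2>8$, four doublings reach past $8$, so monotonicity yields $\Psi_\pm(8)\le D_c^4\cdot2m_f$. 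Then $\Psi_+(8)+\Psi_-(8)+2m_f\le(4D_c^4+2)m_f$, which is the stated form.

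If the bookkeeping of the sandwich leaves an extra $m_f$ per side, it is absorbed into the ``$+2$''. The tilt identity for the averaged law is the one point to verify carefully.
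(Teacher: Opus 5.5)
Your proposal is correct and follows the paper's proof essentially step for step. You use the same split of $\Df(\mu_1\star\cdots\star\mu_n)$ into $\E_P g_+$ and $\E_Q g_-$ parts, the same tilt check for the averaged law, two applications of Theorem~\ref{thm:exp-regular-cover} with $\Psi_\pm$ and exponent $\alpha_c$, the sandwich \eqref{eq:Psi-sandwich} supplying the $+2m_f$, and four doublings from $x=\log2$ to get $\Psi_+(8)+\Psi_-(8)\le 4D_c^4m_f$; the paper records the sharper $\Psi_\pm(\log2)\le\frac74m_f$, but your $2m_f$ yields the same stated constant.
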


\begin{proof}
Let $T_i$ be the coordinate variable on $(0,\infty)$ and define
probability measures
\[
 Q_i(\dd t):=\mu_i(\dd t),
 \qquad
 P_i(\dd t):=t\,\mu_i(\dd t).
\]
Under $P_i$, put $X_i^+=\log T_i$; under $Q_i$, put
$X_i^-=-\log T_i$. Then
\[
 \E_{P_i}e^{-X_i^+}=1,
 \qquad
 \E_{Q_i}e^{-X_i^-}=1.
\]
Writing $P=\otimes_iP_i$ and $Q=\otimes_iQ_i$, and splitting according
to the sign of $\sum_i\log T_i$, gives
\begin{equation}
 \begin{aligned}
 \Df(\mu_1\star\cdots\star\mu_n)
 ={}&
 \E_P g_+\!\left(\pos{\sum_iX_i^+}\right)\\
 &+
 \E_Q g_-\!\left(\pos{\sum_iX_i^-}\right).
 \end{aligned}
 \label{eq:linked-Esscher}
\end{equation}

Put $\overline\mu=n^{-1}\sum_i\mu_i$ and define
$\overline Q=\overline\mu$ and
$\overline P(\dd t)=t\overline\mu(\dd t)$. The logarithmic variables
under $\overline P^{\otimes n}$ and $\overline Q^{\otimes n}$ are the
iid samples from the average laws of the two arrays above. Apply
Theorem~\ref{thm:exp-regular-cover} to $\Psi_+$ and $\Psi_-$ with the
common exponent $\alpha_c$. By \eqref{eq:Psi-sandwich},
\[
 \E\Psi_\pm(S^+)\le\E g_\pm(S^+)+m_f.
\]
Moreover, at $x=\log2$,
\[
 g_+(\log2)=\frac12f(2)\le\frac34m_f,
 \qquad
 g_-(\log2)=f(1/2)\le\frac34m_f,
\]
and therefore
\[
 \Psi_\pm(\log2)\le\frac74m_f.
\]
Since $8<16\log2$, four applications of \eqref{eq:Psi-doubling} give
\begin{equation}
 \Psi_+(8)+\Psi_-(8)
 \le\frac72D_c^4m_f
 \le4D_c^4m_f.
 \label{eq:Psi-eight}
\end{equation}
The corresponding identity for $\overline\mu$, together with
$g_\pm\le\Psi_\pm$, bounds the homogenized divergence by
\[
 R_{\alpha_c}\left[
  \Df(\mu_1\star\cdots\star\mu_n)
  +(4D_c^4+2)m_f
 \right].
\]
This is \eqref{eq:additive-RC}.
\end{proof}

\begin{theorem}[Comparison at positive Hellinger distance]
\label{thm:large-defect}
For $d_0>0$, put
\begin{equation}
 r_{d_0}:=\frac{1-e^{-d_0}}4,
 \qquad
 s_{d_0}:=\operatorname{arctanh}(r_{d_0}),
 \qquad
 j_{d_0}:=\left\lceil
  \log_2\frac{s_*}{s_{d_0}}
 \right\rceil_+,
 \label{eq:large-defect-scales}
\end{equation}
where $\lceil x\rceil_+:=\max\{0,\lceil x\rceil\}$. If
\[
 \sum_{i=1}^n d(\mu_i)\ge d_0,
\]
then
\begin{equation}
 \Df(\overline\mu^{\star n})
 \le
 C_{\mathrm{LD}}(c,d_0)
 \Df(\mu_1\star\cdots\star\mu_n),
 \label{eq:large-defect}
\end{equation}
where
\begin{equation}
 C_{\mathrm{LD}}(c,d_0)
 :=
 R_{\alpha_c}\left[
  1+(4D_c^4+2)(2c)^{j_{d_0}}
 \right].
 \label{eq:CLD-general}
\end{equation}
In particular, for $d_0=1/4$ one has $j_{1/4}\le3$, and hence
\begin{equation}
 C_{\mathrm{LD}}(c,1/4)
 \le
 R_{\alpha_c}\left[
  1+8c^3(4D_c^4+2)
 \right].
 \label{eq:CLD-quarter}
\end{equation}
\end{theorem}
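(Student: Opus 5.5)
The plan is to combine the additive-error comparison of Theorem~\ref{thm:additive-RC} with a lower bound on the product divergence. Specifically, I would show
\[
 m_f\le(2c)^{j_{d_0}}\,\Df(\mu_1\star\cdots\star\mu_n)
 \qquad\text{whenever}\qquad \sum_i d(\mu_i)\ge d_0.
\]
Inserting this into \eqref{eq:additive-RC} turns the additive term $(4D_c^4+2)m_f$ into $(4D_c^4+2)(2c)^{j_{d_0}}\Df(\mu_1\star\cdots\star\mu_n)$. That gives \eqref{eq:large-defect} with the constant \eqref{eq:CLD-general}. As in Theorem~\ref{thm:additive-RC}, I work with positive finitely supported laws.

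\emph{Step 1: defect of the product.} Let $\mu=\mu_1\star\cdots\star\mu_n$. The integral $\int\sqrt t\,\dd\mu$ factorizes, so by \eqref{eq:defect}
\[
 1-d(\mu)=\prod_i\bigl(1-d(\mu_i)\bigr)\le e^{-\sum_id(\mu_i)}\le e^{-d_0}.
\]
Hence $d(\mu)\ge1-e^{-d_0}=4r_{d_0}$.

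\emph{Step 2: binary lower bound.} Let $U$ be the centered score of $\eta_\mu$, write $r=r_{d_0}$, and let $s=s_{d_0}$, so that $\tanh s=r$. By \eqref{eq:posterior-defect}, $\E U^2\ge d(\mu)\ge4r$. Since $|U|<1$, this gives $\E|U|\ge\E U^2\ge4r$. Centering then gives $\E U^+=\E U^-\ge2r$.

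Next I use convexity of $\Phi_f$ together with $\Phi_f\ge0$ and $\Phi_f(0)=0$. These give, for all $u\in(-1,1)$,
\[
 \Phi_f(u)\ge\frac{\Phi_f(r)}{r}(u-r)_++\frac{\Phi_f(-r)}{r}(-u-r)_+ .
\]
For $u\ge r$ this is the chord slope bound, and elsewhere the right side is $0$ on the relevant side. Taking expectations, and using $\E(U^\pm-r)_+\ge\E U^\pm-r\ge r$, yields
\[
 \Df(\mu)=\E\Phi_f(U)\ge\Phi_f(r)+\Phi_f(-r)=\mathcal M_f(s_{d_0}).
\]
This is the "binary contraction" that absorbs the additive error.

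\emph{Step 3: comparison with the reference scale.} By the choice of $j=j_{d_0}$ we have $2^j s_{d_0}\ge s_*$. Lemma~\ref{lem:symmetric-moderation} says $\mathcal M_f$ is nondecreasing and satisfies the doubling bound \eqref{eq:M-doubling}. Applying the doubling bound $j$ times gives
\[
 m_f=\mathcal M_f(s_*)\le\mathcal M_f(2^js_{d_0})\le(2c)^j\mathcal M_f(s_{d_0})\le(2c)^j\Df(\mu).
\]
This completes the main bound.

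\emph{Step 4: the case $d_0=1/4$.} Here $r_{1/4}=(1-e^{-1/4})/4\approx0.0553$, so $s_{1/4}\approx0.0553$. Since $s_*\approx0.3466$, the ratio $s_*/s_{1/4}\approx6.3<8$, and therefore $j_{1/4}\le3$. This gives \eqref{eq:CLD-quarter}.

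The main obstacle is Step 2: getting a lower bound uniform in the generator from only a second-moment lower bound on $U$. The chord-with-threshold estimate handles this because $|U|<1$ converts second moments into first moments, and centering splits the mass evenly between the two signs.
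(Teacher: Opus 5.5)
Your proof is correct and has the same architecture as the paper's. You anchor $m_f$ at the scale $s_{d_0}$ through the doubling bound \eqref{eq:M-doubling}, then insert $m_f\le(2c)^{j_{d_0}}\Df(\mu_1\star\cdots\star\mu_n)$ into \eqref{eq:additive-RC}. Your Steps 1, 3 and 4 match the paper essentially line by line.

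The difference is in how you prove the anchor inequality $\mathcal M_f(s_{d_0})\le\Df(\mu_1\star\cdots\star\mu_n)$.

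\emph{The paper's route.} It passes to the experiment $(P,Q)$ and uses $\TV\ge1-\Aff\ge1-e^{-d_0}$. It then contracts to a binary experiment through a channel that mixes $\one_A$ with a fair bit. This halves the total variation, which accounts for one of the two factors of $2$ in $r_{d_0}$. It applies Jensen in the form $\Phi_f(\E U_+)\le p_+\Phi_f(u_+)$ to the two-valued score of $(P',Q')$, and returns to the product by data processing.

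\emph{Your route.} You stay with the score $U$ of the product law itself. The chain $d\le\E U^2\le\E|U|$ is just $1-\Aff\le\TV$ written in posterior coordinates, since $\E|U|=\TV(P,Q)$ under the midpoint measure. It gives $\E U^\pm\ge2r_{d_0}$. Your thresholded chord bound then spends the other factor of $2$ to reach $\mathcal M_f(s_{d_0})$.

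\emph{What each buys.} Your route avoids the extremal set $A$, the randomized channel and data processing. It also shows that the paper's channel step is unnecessary. Applying Jensen directly to $\E U^+\ge2r_{d_0}$, as the paper itself does in Proposition~\ref{prop:endpoint-reserve}, would even anchor at the larger scale $\operatorname{arctanh}(2r_{d_0})$. The paper's route presents the step as an explicit binary testing contraction, which matches the informal description in Section~\ref{sec:strategy}.

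Since you never actually contract to a binary experiment, calling your Step 2 the ``binary contraction'' is a misnomer. Nothing in the argument depends on it.
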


\begin{proof}
Use the product experiment $(P,Q)$ from the proof of
Theorem~\ref{thm:additive-RC}. Its Hellinger affinity is
\[
 \Aff(P,Q)
 =
 \prod_i\int\sqrt t\,\mu_i(\dd t)
 =
 \prod_i[1-d(\mu_i)]
 \le e^{-d_0}.
\]
Therefore
\[
 \TV(P,Q)\ge1-\Aff(P,Q)\ge1-e^{-d_0}.
\]
Take a measurable set $A$ such that
$P(A)-Q(A)=\TV(P,Q)$. Apply the channel which, with probability $1/2$,
outputs an independent fair bit and, with probability $1/2$, outputs
$\one_A$. Let $P',Q'$ be the resulting Bernoulli laws. Their parameters
belong to $[1/4,3/4]$, and
\[
 \TV(P',Q')=\frac12\TV(P,Q).
\]
Let $U$ be the centered posterior score of the binary experiment
$(P',Q')$. Under the equal-prior mixture,
\[
 \E U_+=\E U_-=\frac12\TV(P',Q')
 \ge r_{d_0}.
\]
Writing $r:=\E U_+$, let
\[
 p_+:=\Pp(U>0),
 \qquad
 u_+:=\E[U\mid U>0].
\]
Then $r=p_+u_+$. Since $\Phi_f$ is convex and $\Phi_f(0)=0$,
\[
 \Phi_f(r)
 \le p_+\Phi_f(u_+)
 \le\E[\Phi_f(U)\one_{\{U>0\}}].
\]
Applying the same argument to $-U$ and using
Lemma~\ref{lem:posterior-identities},
\[
 \mathcal M_f(\operatorname{arctanh}r)
 =\Phi_f(r)+\Phi_f(-r)
 \le\E\Phi_f(U)
 =D_f(P'\|Q').
\]
By monotonicity of $\mathcal M_f$, data processing, and
\eqref{eq:M-doubling},
\begin{align}
 m_f
 &=\mathcal M_f(s_*)\notag\\
 &\le(2c)^{j_{d_0}}\mathcal M_f(s_{d_0})\notag\\
 &\le(2c)^{j_{d_0}}D_f(P'\|Q')\notag\\
 &\le(2c)^{j_{d_0}}
 \Df(\mu_1\star\cdots\star\mu_n).
 \label{eq:mf-large-defect-charge}
\end{align}
Insert \eqref{eq:mf-large-defect-charge} into
\eqref{eq:additive-RC}. This proves \eqref{eq:CLD-general}.
For $d_0=1/4$,
\[
 \frac{s_*}{s_{1/4}}<8,
\]
so $j_{1/4}\le3$, proving \eqref{eq:CLD-quarter}.
\end{proof}

It remains to treat
\begin{equation}
 \Delta:=\sum_i d(\mu_i)\le\frac14.
 \label{eq:small-defect}
\end{equation}

\section{Thinning of centered sums}\label{app:centered}

We prove Theorem~\ref{thm:centered-erasure} through a uniform comparison
for shifted positive parts. Conditioning each summand on its threshold
exceedance reduces this comparison to centered Bernoulli variables.

\subsection{A Bernoulli tail comparison}

Let $A_1,\ldots,A_n$ be independent Bernoulli variables with
\[
 \Pp(A_i=1)=p_i,
 \qquad 0\le p_i<1,
\]
and put
\[
 \theta_i:=\frac{p_i}{1-p_i},
 \qquad
 V_i:=
 \begin{cases}
  1,&A_i=1,\\
  -\theta_i,&A_i=0.
 \end{cases}
\]
Then $\E V_i=0$. Write
\[
 N:=\sum_iA_i,
 \qquad
 H:=\sum_iV_i.
\]

\begin{lemma}[Bernoulli tail comparison]
\label{lem:PB-score}
One has
\begin{equation}
 \Pp(N\ge2)
 \le
 7\,\E(H-1)_+.
 \label{eq:PB-score}
\end{equation}
\end{lemma}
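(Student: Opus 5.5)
The plan is a case analysis on the size of the weights $\theta_i$, built on two facts. First, $\E V_i=0$ and $\operatorname{Var}V_i=p_i(1-p_i)(1+\theta_i)^2=\theta_i$, so $\operatorname{Var}H=\Theta:=\sum_i\theta_i$. Second, for convex $\varphi$ and independent centered $Y$, conditional Jensen gives $\E\varphi(X+Y)\ge\E\varphi(X)$. Hence $\E(H-1)_+$ can only decrease when summands are deleted, or when we condition on some summands and average over the others. The identity
\[
 H=N-\sum_i\theta_i(1-A_i)
\]
is the bookkeeping device throughout.

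\emph{Case $\Theta\le1/2$.} On $\{N\ge2\}$ the identity gives $H\ge2-\Theta\ge3/2$, so $(H-1)_+\ge1/2$. Therefore $\Pp(N\ge2)\le2\E(H-1)_+$.

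\emph{Preparation for $\Theta>1/2$.} Let $i_0$ maximize $\theta_i$ and put $\Theta':=\Theta-\theta_{i_0}$. Every event $\{N\ge2\}$ needs some $A_i=1$ with $i\ne i_0$, so
\[
 \Pp(N\ge2)\le\sum_{i\ne i_0}p_i\le\Theta'.
\]
This rules out the degenerate situation with one large parameter, where $\Pp(N\ge2)=0$ while a single summand can have $\E(V-1)_+=0$.

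\emph{Case $\Theta'\ge\kappa$, for a numerical $\kappa$.} Here it suffices to show $\E(H-1)_+\ge1/7$. I split the indices into two groups, each of total $\theta$-weight at least a constant, which is possible since the maximal term alone contributes $\theta_{i_0}\le\Theta$ and the rest contributes $\Theta'\ge\kappa$. Inside each group I trim to a subset whose weight lies in a bounded window. The point of the window is that it keeps fourth moments comparable to squared variances: the summands are bounded above by $1$, and after trimming the large negative atoms carry only controlled mass. Paley--Zygmund then gives a constant probability that each group sum is at least a positive constant. By independence, both group sums are large simultaneously with constant probability, which yields $\E(H_S-1)_+\ge c$ for the retained set $S$. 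The convex-order monotonicity above transfers this to $H$.

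\emph{Case $\Theta'<\kappa<\Theta$.} This is the step I expect to be hardest, since $\theta_{i_0}$ dominates. Here I condition on $A_{i_0}=1$, which has probability $p_{i_0}\ge\theta_{i_0}/(1+\theta_{i_0})$, bounded below because $\theta_{i_0}>1/2-\kappa$. On this event $H=1+H'$, with $H'=\sum_{i\ne i_0}V_i$ centered, so
\[
 \E(H-1)_+\ge p_{i_0}\E H'_+=\tfrac12p_{i_0}\E|H'|.
\]
Since $\Theta'$ is small, $H'$ equals $-\Theta'$ except on the events $A_i=1$ with $i\ne i_0$, where it jumps up by $1+\theta_i$. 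This gives $\E|H'|\gtrsim\sum_{i\ne i_0}p_i\ge\Pp(N\ge2)$.

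The remaining work is to choose $\kappa$ and the trimming window so that the three constants combine to $7$. I would first try $\kappa=1/4$ and a window $[1/4,2]$ for the group weights.
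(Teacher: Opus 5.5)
\textbf{Verdict: there is a genuine gap, in your middle case.} Your first and third cases are sound. In the third case, compare $\E H'_+\ge(1-\Theta')\Pp(N'\ge1)$, where $N'=N-A_{i_0}$, directly with $\Pp(N\ge2)\le\Pp(N'\ge1)$. With $\kappa=1/4$ and $p_{i_0}>1/5$ this gives the constant $20/3$.

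\textbf{Where it fails.} In the case $\Theta'\ge\kappa$ you bound $\Pp(N\ge2)\le1$ and aim for the absolute bound $\E(H-1)_+\ge1/7$. That bound is false in this regime. Take $n$ equal weights $\theta_i=0.6/n$ and let $n\to\infty$. Then $\Theta=0.6>1/2$ and $\Theta'\to0.6\ge\kappa$. Meanwhile $H=N-\Theta+\sum_i\theta_iA_i$ converges in law to $N_\infty-0.6$ with $N_\infty\sim\Pois(0.6)$, and $(H-1)_+\le N$ is uniformly integrable. Hence
\[
 \E(H-1)_+\longrightarrow\E(N_\infty-1.6)_+=1.96\,e^{-0.6}-1\approx0.076<\tfrac17 .
\]
Letting $\Theta\downarrow1/2$ instead gives $1.75e^{-1/2}-1\approx0.061$. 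The lemma itself holds there with room to spare, since $\Pp(N\ge2)\to1-1.6e^{-0.6}\approx0.12$. The step that fails is $\Pp(N\ge2)\le1$, which is far too lossy.

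\textbf{Why tuning does not help.} No choice of $\kappa$ or trimming window rescues the scheme. In this configuration $\theta_{i_0}\to0$, so $p_{i_0}\to0$ and the conditioning of your third case is useless. The condition $\Theta>1/2$ excludes your first case, and even its natural extension (using $H\ge2-\Theta$ on $\{N\ge2\}$ for $\Theta\le6/7$) leaves values of $\E(H-1)_+$ near $0.1$ at $\Theta=6/7$. So the absolute-lower-bound route cannot give any constant below about $16$. In this Poisson regime, $\E(H-1)_+\ge1/7$ only becomes true once $\Theta$ exceeds roughly $1.2$.

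\textbf{A secondary problem.} The Paley--Zygmund sketch is also incomplete. Two group sums that are each ``at least a positive constant'' help only if the two levels add up to more than $1$. Moreover, a single very large $\theta_{i_0}$ cannot be trimmed into a window, and its fourth moment is not comparable to its squared variance.

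\textbf{What is missing.} You need a comparison that keeps $\Pp(N\ge2)$ on the left in the intermediate regime of spread-out weights, rather than an absolute lower bound on $\E(H-1)_+$. The paper obtains this exactly, for all configurations at once. Put $Z=\prod_i(1+\theta_i)$ and let $e_k$ be the elementary symmetric polynomials of the $\theta_i$. Then the active set $B$ has probability $Z^{-1}\prod_{i\in B}\theta_i$, so $Z\,\Pp(N\ge2)=\sum_{k\ge2}e_k$. Grouping active sets by size gives $Z\,\E(H-1)_+\ge\sum_{k\ge2}(a_k)_+$ with $a_k=(k-1)e_k-(k+1)e_{k+1}$. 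Newton's inequalities make $r_k=(k+1)e_{k+1}/e_k$ decreasing, so $a_k$ changes sign exactly once. Telescoping, together with geometric bounds on $e_k$ on either side of the sign change, then yields $\sum_{k\ge2}e_k\le7\sum_{k\ge2}(a_k)_+$.
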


\begin{proof}
Discard indices with $p_i=0$. The assertion is immediate if at most one
index remains. Otherwise, $n\ge2$ and every $\theta_i>0$.

Put
\[
 Z:=\prod_i(1+\theta_i),
\]
and let $e_k$ be the $k$th elementary symmetric polynomial in
$\theta_1,\ldots,\theta_n$, with $e_{n+1}:=0$. An active set
$B\subseteq[n]$ has probability
\[
 \Pp(\{i:A_i=1\}=B)
 =
 Z^{-1}\prod_{i\in B}\theta_i.
\]
If $|B|=k$, then
\[
 H(B)=k-\sum_{j\notin B}\theta_j.
\]
Consequently,
\begin{align}
 \sum_{|B|=k}
 \left(\prod_{i\in B}\theta_i\right)
 \bigl(H(B)-1\bigr)
 &=(k-1)e_k-(k+1)e_{k+1}\notag\\
 &=:a_k.\label{eq:ak}
\end{align}
Taking positive parts yields
\begin{equation}
 Z\,\E(H-1)_+
 \ge
 \sum_{k=2}^n(a_k)_+.\label{eq:PB-positive}
\end{equation}

Define
\[
 r_k:=\frac{(k+1)e_{k+1}}{e_k}.
\]
Newton's inequalities
\cite[Chapter~II, $\mathsection$2.22, Theorem~51]{HardyLittlewoodPolya} give
\[
 \frac{e_{k+1}e_{k-1}}{e_k^2}
 \le
 \frac{k}{k+1}\frac{n-k}{n-k+1}.
\]
Hence
\begin{equation}
 \frac{r_k}{r_{k-1}}
 =
 \frac{k+1}{k}\frac{e_{k+1}e_{k-1}}{e_k^2}
 \le
 \frac{n-k}{n-k+1}
 <1.\label{eq:r-decreasing}
\end{equation}
Thus $(r_k)$ is decreasing.

Let $m\ge2$ be the first index such that $a_m\ge0$, equivalently
$r_m\le m-1$. Such an index exists because $a_n=(n-1)e_n\ge0$.
By \eqref{eq:r-decreasing}, $a_k\ge0$ for every $k\ge m$. Telescoping
therefore gives
\begin{align}
 \sum_{k=m}^na_k
 &=
 \sum_{k=m}^n\bigl((k-1)e_k-(k+1)e_{k+1}\bigr)\notag\\
 &=
 (m-1)e_m-\sum_{k=m+1}^ne_k.\label{eq:a-telescope}
\end{align}
For $k\ge m$,
\[
 \frac{e_{k+1}}{e_k}
 =\frac{r_k}{k+1}
 \le\frac{m-1}{k+1}
 \le\frac{m-1}{m+1}.
\]
More precisely, iteration gives
\[
 e_{m+j}
 \le
 e_m\frac{(m-1)^j}{(m+1)(m+2)\cdots(m+j)},
\]
and summing the geometric bound with ratio $(m-1)/(m+1)$ yields
\[
 \sum_{k=m+1}^ne_k\le\frac{m-1}{2}e_m.
\]
Together with \eqref{eq:a-telescope},
\begin{equation}
 \sum_{k=2}^n(a_k)_+
 \ge
 \frac{m-1}{2}e_m.\label{eq:a-lower}
\end{equation}

Suppose first that $m\ge3$. For $j<m$, one has $a_j<0$, hence
$r_j>j-1$. Since $(r_j)$ is decreasing,
\[
 r_j\ge r_{m-1}>m-2
 \qquad(j<m).
\]
Iterating $e_{j+1}=r_je_j/(j+1)$ backward, for $2\le k<m$ we obtain
\begin{equation}
 e_k
 \le
 e_m\frac{m!}{k!(m-2)^{m-k}}.\label{eq:e-backward}
\end{equation}
The coefficients in \eqref{eq:e-backward} are at most $3$: for $m=3$
the sole coefficient is $3$; for $m\ge4$, the largest coefficient with
$k\le m-2$ occurs at $k=m-2$ and equals
\[
 \frac{m(m-1)}{(m-2)^2}\le3,
\]
whereas the coefficient at $k=m-1$ is $m/(m-2)\le2$. Consequently,
\[
 \sum_{k=2}^me_k\le3(m-1)e_m.
\]
The preceding tail estimate now gives
\begin{equation}
 \sum_{k=2}^ne_k
 \le
 \frac72(m-1)e_m.\label{eq:e-total}
\end{equation}
Combining \eqref{eq:a-lower} and \eqref{eq:e-total},
\begin{equation}
 \sum_{k=2}^ne_k
 \le
 7\sum_{k=2}^n(a_k)_+.\label{eq:e-a}
\end{equation}

If $m=2$, then $r_k\le1$ for every $k\ge2$, and therefore
$e_{k+1}/e_k\le1/(k+1)$. It follows that
\[
 \sum_{k=3}^ne_k<\frac12e_2.
\]
On the other hand,
\[
 \sum_{k=2}^n(a_k)_+
 =e_2-\sum_{k=3}^ne_k
 >\frac12e_2,
\]
so \eqref{eq:e-a} also holds.

Finally,
\[
 \Pp(N\ge2)
 =\frac1Z\sum_{k=2}^ne_k
 \le
 \frac7Z\sum_{k=2}^n(a_k)_+
 \le
 7\,\E(H-1)_+,
\]
by \eqref{eq:PB-positive}.
\end{proof}

\subsection{Shifted Bernoulli sums}


\begin{lemma}[Shift comparison]
\label{lem:Boolean-refund}
Let $V_i$ be as above, let $c_i\ge0$, and put
\[
 C:=\sum_ic_iV_i,
 \qquad
 H:=\sum_iV_i,
 \qquad
 S:=C+H.
\]
Then
\begin{equation}
 \E C_+
 \le
 8\,\E(S-1)_+.
 \label{eq:Boolean-refund}
\end{equation}
Consequently, for every $t>0$,
\begin{equation}
 \E\left(\sum_ic_iV_i\right)_+
 \le
 8\,\E\left(\sum_i(c_i+t)V_i-t\right)_+.
 \label{eq:Boolean-refund-scaled}
\end{equation}
\end{lemma}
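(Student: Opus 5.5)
The second inequality \eqref{eq:Boolean-refund-scaled} follows from the first by scaling. Applying \eqref{eq:Boolean-refund} with $c_i/t$ in place of $c_i$ gives $\E(\sum_i (c_i/t)V_i)_+\le 8\E(\sum_i(c_i/t+1)V_i-1)_+$, and multiplying by $t$ finishes. So the work is in \eqref{eq:Boolean-refund}.

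The plan is to view the right-hand side as a convex function of a scaling parameter,
\[
 F(\lambda):=\E(\lambda C+H-1)_+,\qquad \lambda\ge0,
\]
and to combine three facts: a pointwise lower bound on $F(1)$, an exact identity for the negative part of $H-1$, and Lemma~\ref{lem:PB-score}.

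\emph{Pointwise bound.} Since $(x+y)_+\ge x+y$, on the event $\{C>0\}$ we have $(S-1)_+\ge C_++(H-1)$. Taking expectations,
\[
 \E C_+\le F(1)+\E\bigl[(H-1)_-\one_{\{C>0\}}\bigr].
\]
On $\{N=0\}$ one has $C=-\sum_ic_i\theta_i\le0$. Hence $\{C>0\}\subseteq\{N\ge1\}$, and it suffices to bound $\E[(H-1)_-\one_{\{N\ge1\}}]$.

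\emph{Identity for the negative part.} Using $\E H=0$ and the active-set probabilities $Z^{-1}\prod_{i\in B}\theta_i$ from the proof of Lemma~\ref{lem:PB-score}, the contribution of $\{N=0\}$ to $\E(H-1)$ is $-(1+e_1)/Z$. Therefore
\[
 \E\bigl[(H-1)\one_{\{N\ge1\}}\bigr]=-1+\frac{1+e_1}{Z}=-\Pp(N\ge2),
\]
and so
\[
 \E\bigl[(H-1)_-\one_{\{N\ge1\}}\bigr]=\E(H-1)_++\Pp(N\ge2)\le8\,\E(H-1)_+,
\]
where the last step is \eqref{eq:PB-score}. Together with the pointwise bound this gives
\[
 \E C_+\le F(1)+8F(0).
\]

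\emph{Monotonicity of $F$.} It remains to show $F(0)\le F(1)$. The function $F$ is convex, so it suffices to show $F'(0+)\ge0$. Formally $F'(0+)=\E[C\one_{\{H>1\}}]$, with an extra nonnegative term from the atom $\{H=1\}$ that only helps. Both $C$ and $\one_{\{H>1\}}$ are nondecreasing functions of the independent variables $A_1,\ldots,A_n$, because $c_i\ge0$ and each $V_i$ is increasing in $A_i$. By Harris's inequality,
\[
 \E[C\one_{\{H>1\}}]\ge\E C\cdot\Pp(H>1)=0.
\]
Hence $F$ is nondecreasing on $[0,\infty)$, and $\E C_+\le 9F(1)$.

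\emph{Main obstacle.} The step I expect to be hardest is getting the constant down to exactly $8$; the argument above yields $9$. To recover the lost unit, I would keep the restriction to $\{C>0\}$ rather than enlarging to $\{N\ge1\}$. On $\{C>0,\,H\ge1\}$ the negative part vanishes, and $F(1)$ also receives contributions from $\{C\le0\}$ that the pointwise step discards. Charging these against the excess $\E(H-1)_+$ should save one copy of $F(0)$.

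If that bookkeeping fails, the fallback is a direct analysis on each active set $B$, mirroring the elementary-symmetric-polynomial computation in Lemma~\ref{lem:PB-score}. There, $C(B)=\sum_{i\in B}c_i-\sum_{j\notin B}c_j\theta_j$ and $H(B)=|B|-\sum_{j\notin B}\theta_j$, and the Newton-inequality monotonicity of $r_k$ controls how much negative mass can accompany a positive value of $C$.
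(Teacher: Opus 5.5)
Your argument is correct as written, but it proves $\E C_+\le 9\,\E(S-1)_+$ rather than the stated constant $8$. The ``main obstacle'' paragraph does not supply the missing unit. The constant matters downstream: the $8$ becomes $16\sqrt3$ in Theorem~\ref{thm:shifted-cotype} and then $16\sqrt6\le40$ in Theorem~\ref{thm:centered-erasure}, whereas $9$ would give $18\sqrt6>40$.

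The loss can be located precisely. With $z=\one_{\{C>0\}}$ you have $\E C_+=\E[Cz]\le F(1)+\E[(1-H)z]$. You then replace this signed term by $\E[(H-1)_-z]$ and enlarge the event to $\{N\ge1\}$. Your own identity gives $\E[(H-1)_-\one_{\{N\ge1\}}]=\E(H-1)_++\Pp(N\ge2)$. So, relative to the bound $\Pp(N\ge2)$ that is actually available for the signed term, this detour through the negative part costs exactly one copy of $F(0)$.

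The missing idea is to show $\E[(1-H)z]\le\Pp(N\ge2)$ for every increasing family $\mathcal U$ of active sets with $\varnothing\notin\mathcal U$ and $z=\one_{\mathcal U}$, not only for $\mathcal U=\{N\ge1\}$, where your identity gives equality. The paper does this with a Boolean integration by parts, $\E[V_ig]=\E[\one_{\{i\in B\}}(g(B)-g(B\setminus\{i\}))]$. Conditioning on the other coordinates, both sides equal $p_i\bigl(g(B\cup\{i\})-g(B\setminus\{i\})\bigr)$.

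Summing over $i$ with $g=z$ gives $\E[(1-H)z]=\E[z-k_{\mathcal U}]$, where $k_{\mathcal U}$ counts the active pivotal coordinates. Every singleton in $\mathcal U$ is pivotal because $\varnothing\notin\mathcal U$, so $z-k_{\mathcal U}\le\one_{\{N\ge2\}}$.

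The family $\{C>0\}$ is increasing because $c_i\ge0$. Lemma~\ref{lem:PB-score} then gives $\E C_+\le F(1)+7F(0)$, and your monotonicity $F(0)\le F(1)$ yields the constant $8$.

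Your proof of that monotonicity, via convexity in $\lambda$ and Harris's inequality, is a valid alternative to the paper's convex-order argument ($V_i\cx(1+c_i)V_i$ together with Lemma~\ref{lem:cx-convolution}). Your scaling derivation of \eqref{eq:Boolean-refund-scaled} is fine.
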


\begin{proof}
Identify a realization with its active set $B\subseteq[n]$. Changing
coordinate $i$ from inactive to active increases $C$ by
$(1+\theta_i)c_i\ge0$. Hence
\[
 \mathcal U:=\{C>0\}
\]
is increasing, and it does not contain the empty set. Put
$z:=\one_{\mathcal U}$. For a function $g$ on the Boolean lattice,
define
\[
 D_ig(B)
 :=
 \one_{\{i\in B\}}
 \bigl(g(B)-g(B\setminus\{i\})\bigr).
\]
Conditioning on the other coordinates gives
\begin{equation}
 \E[V_ig]=\E[D_ig].\label{eq:Boolean-IBP}
\end{equation}
Both conditional expectations equal
\[
 p_i\bigl(g(B\cup\{i\})-g(B\setminus\{i\})\bigr).
\]

Let
\[
 k_{\mathcal U}:=\sum_iD_iz
\]
be the number of active pivotal coordinates. By
\eqref{eq:Boolean-IBP},
\begin{equation}
 \E[(1-H)z]
 =
 \E[z-k_{\mathcal U}].\label{eq:pivotal-identity}
\end{equation}
Every singleton in $\mathcal U$ has a pivotal coordinate, since
$\varnothing\notin\mathcal U$. Thus
\[
 z-k_{\mathcal U}\le\one_{\{N\ge2\}}.
\]
Lemma~\ref{lem:PB-score} yields
\begin{equation}
 \E[(1-H)z]
 \le
 7\,\E(H-1)_+.\label{eq:refund-error}
\end{equation}

We also have
\begin{equation}
 H\cx S.\label{eq:H-S-cx}
\end{equation}
If $X$ is centered and $\lambda\ge1$, then $X\cx\lambda X$. Indeed,
for every convex $\varphi$, the function
$r\mapsto\E\varphi(rX)$ is convex and is minimized at $r=0$ by
Jensen's inequality; it is therefore nondecreasing on $[0,\infty)$.
Apply this to $V_i$ with $\lambda=1+c_i$, and use
Lemma~\ref{lem:cx-convolution}. It follows that
\begin{equation}
 \E(H-1)_+
 \le
 \E(S-1)_+.\label{eq:H-S-stoploss}
\end{equation}
Finally,
\[
 \begin{aligned}
 \E(S-1)_+
 &\ge \E[(S-1)z]\\
 &=\E[Cz]-\E[(1-H)z]\\
 &=\E C_+-\E[(1-H)z].
 \end{aligned}
\]
Using \eqref{eq:refund-error} and \eqref{eq:H-S-stoploss},
\[
 \E(S-1)_+
 \ge
 \E C_+-7\E(S-1)_+,
\]
which proves \eqref{eq:Boolean-refund}. Apply it with $c_i/t$ and
multiply by $t$ to obtain \eqref{eq:Boolean-refund-scaled}.
\end{proof}

\subsection{Shifted positive parts}

\begin{lemma}[Rademacher lower bound]
\label{lem:Rademacher-lower}
For arbitrary real numbers $a_1,\ldots,a_n$ and independent Rademacher
variables $\varepsilon_i$,
\begin{equation}
 \E\left|\sum_i\varepsilon_i a_i\right|
 \ge
 \frac1{\sqrt3}
 \left(\sum_i a_i^2\right)^{1/2}.\label{eq:Rademacher-lower}
\end{equation}
\end{lemma}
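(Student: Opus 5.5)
This is the Khintchine inequality with the optimal-type constant $1/\sqrt3$. It follows from a Hölder interpolation between the second and fourth moments. Put $Y:=\sum_i\varepsilon_i a_i$ and $\sigma^2:=\sum_ia_i^2$. If $\sigma=0$ there is nothing to prove, so assume $\sigma>0$.

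The first step computes two moments. Orthogonality of the Rademacher variables gives $\E Y^2=\sigma^2$. For the fourth moment, expand $Y^4$ and keep only index patterns in which every index appears an even number of times. This gives
\[
 \E Y^4=\sum_i a_i^4+3\sum_{i\ne j}a_i^2a_j^2
 =3\sigma^4-2\sum_ia_i^4\le3\sigma^4.
\]

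The second step interpolates. Write $Y^2=|Y|^{2/3}\,|Y|^{4/3}$ and apply Hölder's inequality with exponents $3/2$ and $3$:
\[
 \E Y^2\le\bigl(\E|Y|\bigr)^{2/3}\bigl(\E Y^4\bigr)^{1/3}.
\]
This is equivalent to $\E|Y|\ge(\E Y^2)^{3/2}/(\E Y^4)^{1/2}$. Inserting the two moment computations yields
\[
 \E|Y|\ge\frac{\sigma^3}{\sqrt3\,\sigma^2}=\frac{\sigma}{\sqrt3},
\]
which is \eqref{eq:Rademacher-lower}.

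The argument has no real obstacle. The only care needed is in the fourth-moment count: the coefficient $3$ comes from the $\binom42/2$ ways to pair four positions into two distinct indices. One should also use the exact interpolation exponents, since a cruder route through Paley–Zygmund would lose constants.
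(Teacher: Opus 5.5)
Your proof is correct and essentially identical to the paper's: it uses the same moment computations $\E Y^2=\sigma^2$ and $\E Y^4\le3\sigma^4$, followed by the same H\"older interpolation $\E Y^2\le(\E|Y|)^{2/3}(\E Y^4)^{1/3}$. As a minor aside that does not affect the argument, $1/\sqrt3$ is not the optimal Khintchine constant for $p=1$; Szarek's sharp value is $1/\sqrt2$.
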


\begin{proof}
Put $R=\sum_i\varepsilon_i a_i$ and
$\sigma^2=\sum_i a_i^2$. Then
\[
 \E R^2=\sigma^2,
 \qquad
 \E R^4
 =\sum_i a_i^4+6\sum_{i<j}a_i^2a_j^2
 \le3\sigma^4.
\]
H\"older's inequality gives
\[
 \E R^2
 \le
 (\E|R|)^{2/3}(\E|R|^4)^{1/3}.
\]
Consequently,
\[
 \E|R|
 \ge
 \frac{(\E R^2)^{3/2}}{(\E R^4)^{1/2}}
 \ge
 \frac\sigma{\sqrt3}.
\]
\end{proof}

\begin{lemma}[$L^1$ comparison]
\label{lem:L1-cotype-zero}
Let $Z_1,\ldots,Z_n$ be independent centered integrable real variables.
Then
\begin{equation}
 \left(\sum_i[\E(Z_i)_+]^2\right)^{1/2}
 \le
 2\sqrt3\,
 \E\left(\sum_iZ_i\right)_+.\label{eq:L1-cotype-zero}
\end{equation}
\end{lemma}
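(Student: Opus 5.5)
The plan is to symmetrize and then use the Rademacher lower bound, Lemma~\ref{lem:Rademacher-lower}, conditionally on the symmetrized variables. The key quantity to work with is $\E(Z_i)_+=\tfrac12\E|Z_i|$, which holds because $Z_i$ is centered. Write $S=\sum_iZ_i$. Since $S$ is also centered, $\E S_+=\tfrac12\E|S|$, so the claim is equivalent to
\[
 \Bigl(\sum_i(\E|Z_i|)^2\Bigr)^{1/2}\le 4\sqrt3\,\E|S|.
\]

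First, let $Z_i'$ be independent copies and put $Y_i=Z_i-Z_i'$. These are symmetric and independent. Jensen's inequality, conditioning on the $Z_i$, gives $\E|\sum_iY_i|\le 2\E|S|$. Conversely, $\E|Y_i|\ge\E|Z_i|$, again by Jensen conditioning on $Z_i$ and using $\E Z_i'=0$.

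Next, by symmetry $(Y_i)$ has the same law as $(\varepsilon_iY_i)$ with Rademacher signs independent of the $Y_i$. Conditioning on the $Y_i$ and applying Lemma~\ref{lem:Rademacher-lower} yields
\[
 \E\Bigl|\sum_iY_i\Bigr|\ge\frac1{\sqrt3}\,\E\Bigl(\sum_iY_i^2\Bigr)^{1/2}.
\]

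Then I need to get from $\E(\sum_iY_i^2)^{1/2}$ to $(\sum_i(\E|Y_i|)^2)^{1/2}$. This is the Minkowski (reverse) inequality for the $\ell^2$-norm of the vector $(|Y_i|)_i$. Since $x\mapsto\|x\|_2$ is convex, Jensen applied to the random vector $(|Y_1|,\dots,|Y_n|)$ gives $\E\|(|Y_i|)\|_2\ge\|(\E|Y_i|)\|_2$.

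Chaining the three steps:
\[
 \Bigl(\sum_i(\E|Z_i|)^2\Bigr)^{1/2}\le\Bigl(\sum_i(\E|Y_i|)^2\Bigr)^{1/2}\le\sqrt3\,\E\Bigl|\sum_iY_i\Bigr|\le2\sqrt3\,\E|S|.
\]
Since $\E(Z_i)_+=\tfrac12\E|Z_i|$ and $\E S_+=\tfrac12\E|S|$, halving both ends gives the statement. In fact it gives a constant of $2\sqrt3$ even before the factor $2$ is absorbed, so the stated constant holds with room to spare.

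None of the steps is delicate. The only point needing care is the case $\E|S|=\infty$, which is trivial. If some terms are only integrable, the conditional Jensen steps are still justified because all the variables are integrable. I expect the Jensen step on the $\ell^2$-norm to be the one to check most carefully. It is the step that converts the random square function into the deterministic quantity on the left-hand side.
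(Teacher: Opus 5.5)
Your argument is correct and is essentially the paper's proof: symmetrize, apply Lemma~\ref{lem:Rademacher-lower} conditionally on the $|Y_i|$, use convexity of the Euclidean norm, then combine $\E|Y_i|\ge\E|Z_i|$ with $\E(\cdot)_+=\tfrac12\E|\cdot|$ for centered variables. There are two small bookkeeping slips. First, $\E|\sum_iY_i|\le2\E|S|$ follows from the triangle inequality, since conditional Jensen gives the reverse bound $\E|\sum_iY_i|\ge\E|S|$. Second, the equivalent target is $(\sum_i(\E|Z_i|)^2)^{1/2}\le2\sqrt3\,\E|S|$ rather than $4\sqrt3\,\E|S|$, so your chain yields the stated constant exactly, with no room to spare.
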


\begin{proof}
Let $Z_i'$ be independent copies and put $D_i=Z_i-Z_i'$. The $D_i$
are independent and symmetric. Conditional on their absolute values,
Lemma~\ref{lem:Rademacher-lower} gives
\[
 \E\left|\sum_iD_i\right|
 \ge
 \frac1{\sqrt3}
 \E\left(\sum_i|D_i|^2\right)^{1/2}.
\]
The Euclidean norm is convex, so
\[
 \E\left(\sum_i|D_i|^2\right)^{1/2}
 \ge
 \left(\sum_i[\E|D_i|]^2\right)^{1/2}.
\]
Moreover, conditional Jensen gives $\E|D_i|\ge\E|Z_i|$, whereas
\[
 \E\left|\sum_iD_i\right|
 \le
 2\E\left|\sum_iZ_i\right|.
\]
Because every variable involved is centered,
\[
 \E(Z_i)_+=\frac12\E|Z_i|,
 \qquad
 \E\left(\sum_iZ_i\right)_+
 =\frac12\E\left|\sum_iZ_i\right|.
\]
Combining these inequalities proves \eqref{eq:L1-cotype-zero}.
\end{proof}

\begin{theorem}[Shifted positive parts]
\label{thm:shifted-cotype}
Let $Z_1,\ldots,Z_n$ be independent centered integrable real variables.
Then, for every $t\ge0$,
\begin{equation}
 \left(\sum_i[\E(Z_i-t)_+]^2\right)^{1/2}
 \le
 16\sqrt3\,
 \E\left(\sum_iZ_i-t\right)_+.
 \label{eq:shifted-cotype}
\end{equation}
\end{theorem}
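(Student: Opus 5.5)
We plan to reduce \eqref{eq:shifted-cotype} to two earlier estimates, Lemma~\ref{lem:L1-cotype-zero} and Lemma~\ref{lem:Boolean-refund}. The idea is to split each $Z_i$ according to whether it exceeds the threshold $t$. Throughout, fix $t>0$; the case $t=0$ is Lemma~\ref{lem:L1-cotype-zero}, with the better constant $2\sqrt3$.

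For each $i$ put $p_i:=\Pp(Z_i>t)$, and let $A_i:=\one_{\{Z_i>t\}}$. If $p_i=0$, the term $\E(Z_i-t)_+$ vanishes and we may discard it. Next, decompose
\[
 Z_i=W_i+c_i'V_i+tV_i,
\]
where $V_i$ is the centered two-valued variable of Section~\ref{app:centered} built from $A_i$. The constants are fixed by the conditional means:
\[
 u_i:=\E[Z_i\mid A_i=1]>t,\qquad \ell_i:=\E[Z_i\mid A_i=0].
\]
Centering gives $p_iu_i+(1-p_i)\ell_i=0$, hence $\ell_i=-\theta_iu_i$. So $\E[Z_i\mid A_i]=u_iV_i=(c_i+t)V_i$ with $c_i:=u_i-t\ge0$. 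The remainder $W_i:=Z_i-u_iV_i$ has conditional mean zero given $A_i$. Moreover,
\[
 \E(Z_i-t)_+=p_ic_i+\E[(Z_i-t)_+\one_{A_i=0}]=p_ic_i,
\]
since $Z_i\le t$ on $\{A_i=0\}$.

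The first step compares with the conditional-mean sum. Conditionally on $(A_i)$, the $W_i$ are independent and centered. By Jensen, applied conditionally on $\sigma(A_1,\ldots,A_n)$ to the convex map $x\mapsto(x-t)_+$,
\[
 \E\Bigl(\sum_iZ_i-t\Bigr)_+\ge\E\Bigl(\sum_i(c_i+t)V_i-t\Bigr)_+.
\]
Lemma~\ref{lem:Boolean-refund}, in the form \eqref{eq:Boolean-refund-scaled}, then gives
\[
 \E\Bigl(\sum_ic_iV_i\Bigr)_+\le8\,\E\Bigl(\sum_iZ_i-t\Bigr)_+.
\]

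The second step applies Lemma~\ref{lem:L1-cotype-zero} to the independent centered variables $c_iV_i$. Because $\E(c_iV_i)_+=c_ip_i=\E(Z_i-t)_+$, this yields
\[
 \Bigl(\sum_i[\E(Z_i-t)_+]^2\Bigr)^{1/2}\le2\sqrt3\,\E\Bigl(\sum_ic_iV_i\Bigr)_+\le16\sqrt3\,\E\Bigl(\sum_iZ_i-t\Bigr)_+,
\]
which is the claim.

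Some technical points need checking. The main one is the case $p_i=1$, which is excluded in Lemma~\ref{lem:Boolean-refund}. This case cannot occur for a centered $Z_i$ when $t\ge0$, since $Z_i>t\ge0$ almost surely would force $\E Z_i>0$; so $p_i<1$ always holds. The conditional-Jensen step needs integrability, which holds because the $Z_i$ are integrable. The only genuinely delicate point is verifying that the decomposition gives exactly the pair $(c_i+t)V_i$ required by \eqref{eq:Boolean-refund-scaled}, with $c_i\ge0$; this follows from $u_i>t$.
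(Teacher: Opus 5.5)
Your proof is correct and is essentially the paper's argument: the same conditional-mean reduction $\E[Z_i\mid\one_{\{Z_i>t\}}]=(c_i+t)V_i$ with $\E(Z_i-t)_+=p_ic_i$, followed by the chain Lemma~\ref{lem:L1-cotype-zero}, then \eqref{eq:Boolean-refund-scaled}, then conditional Jensen. The one step you state only briefly is dropping the indices with $p_i=0$ from the right-hand side, which the paper justifies explicitly through $X\cx X+W$; in your setup it follows from the conditional Jensen step you already use, because those $Z_i$ have conditional mean zero given $\sigma(A_1,\ldots,A_n)$.
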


\begin{proof}
The case $t=0$ is Lemma~\ref{lem:L1-cotype-zero}. Let $t>0$ and put
$p_i=\Pp(Z_i>t)$.

Discard indices with $p_i=0$. If $X$ and $W$ denote the retained and
discarded sums, respectively, then $W$ is centered and independent of
$X$, so conditional Jensen gives
\[
 X\cx X+W
\]
and hence
\[
 \E(X-t)_+\le\E(X+W-t)_+.
\]
The discarded indices contribute zero to the left-hand side, so it
suffices to prove the result for the retained sum. We may therefore
assume $0<p_i<1$, the upper bound following from centeredness. Let
\[
 \mathcal H
 :=
 \sigma(\one_{\{Z_i>t\}}:1\le i\le n),
 \qquad
 Y_i:=\E[Z_i\mid\one_{\{Z_i>t\}}].
\]
Each $Y_i$ is centered and two-point, so for some $c_i>0$,
\[
 Y_i=(c_i+t)V_i,
\]
where $V_i$ is as in Lemma~\ref{lem:Boolean-refund}. On $\{Z_i>t\}$,
$Y_i=c_i+t$; on the complement,
\[
 Y_i=-\frac{p_i}{1-p_i}(c_i+t).
\]
Furthermore,
\begin{equation}
 \E(Z_i-t)_+
 =p_ic_i
 =\E(c_iV_i)_+.\label{eq:individual-stoploss}
\end{equation}
Lemma~\ref{lem:L1-cotype-zero}, followed by
\eqref{eq:Boolean-refund-scaled}, gives
\[
 \begin{aligned}
 \left(\sum_i[\E(Z_i-t)_+]^2\right)^{1/2}
 &\le2\sqrt3\,
 \E\left(\sum_ic_iV_i\right)_+\\
 &\le16\sqrt3\,
 \E\left(\sum_i(c_i+t)V_i-t\right)_+\\
 &=16\sqrt3\,
 \E\left(\sum_iY_i-t\right)_+.
 \end{aligned}
\]
Since
\[
 \sum_iY_i
 =
 \E\left[\sum_iZ_i\mid\mathcal H\right],
\]
conditional Jensen finishes the proof.
\end{proof}

\subsection{Proof of the convex thinning inequality}

Theorem~\ref{thm:shifted-cotype} implies
Theorem~\ref{thm:centered-erasure} by random partitioning and the
positive-part representation of convex functions.


\begin{proof}[Proof of Theorem~\ref{thm:centered-erasure}]
First suppose $q=1/m$, where $m\ge1$ is an integer. Independently
color every index uniformly with one of $m$ colors and put
\[
 W_j:=\sum_{\{i:C_i=j\}}Z_i,
 \qquad 1\le j\le m.
\]
Conditional on the coloring, $W_1,\ldots,W_m$ are independent and
centered, with $\sum_jW_j=\sum_iZ_i$. Theorem~\ref{thm:shifted-cotype}
gives
\[
 \left(\sum_{j=1}^m[\E(W_j-t)_+]^2\right)^{1/2}
 \le
 16\sqrt3\,
 \E\left(\sum_iZ_i-t\right)_+.
\]
By Cauchy--Schwarz,
\[
 \frac1m\sum_{j=1}^m\E(W_j-t)_+
 \le
 \frac{16\sqrt3}{\sqrt m}
 \E\left(\sum_iZ_i-t\right)_+.
\]
For an independent uniform color $J$, the indicators
$\one_{\{C_i=J\}}$ are conditionally independent $\Ber(1/m)$ variables
whose joint law does not depend on $J$. Thus $W_J$ has the law of the
$1/m$-thinned sum, and averaging over the coloring gives
\begin{equation}
 \E\left(\sum_iB_iZ_i-t\right)_+
 \le
 16\sqrt{3q}\,
 \E\left(\sum_iZ_i-t\right)_+\label{eq:erasure-hinge-recip}
\end{equation}
when $q=1/m$.

Now let $0<q<1/2$ and put $m=\lfloor1/q\rfloor$. Then
$q\le1/m\le2q$. If $Z$ is centered and $0\le r\le s\le1$, then
$B_rZ\cx B_sZ$, because for every convex $\varphi$,
\[
 \E\varphi(B_rZ)
 =(1-r)\varphi(0)+r\E\varphi(Z)
\]
is nondecreasing in $r$ by Jensen's inequality. Applying
Lemma~\ref{lem:cx-convolution} coordinatewise and then
\eqref{eq:erasure-hinge-recip} at $1/m$ gives
\begin{equation}
 \E\left(\sum_iB_iZ_i-t\right)_+
 \le
 16\sqrt{6q}\,
 \E\left(\sum_iZ_i-t\right)_+
 \le
 40\sqrt q\,
 \E\left(\sum_iZ_i-t\right)_+.\label{eq:erasure-positive-hinge}
\end{equation}
For $q\ge1/2$, convex-order comparison with the unthinned sum gives
\eqref{eq:erasure-positive-hinge}; $q=0$ is immediate.
Apply the same inequality to $-Z_i$.

The finite convex function $\psi$ is locally absolutely continuous,
with finite one-sided derivatives. Put
\[
 a_+:=\psi'_+(0),
 \qquad
 a_-:=-\psi'_-(0).
\]
Because zero is a minimizer, $a_\pm\ge0$. Let $\nu_+$ be the
Lebesgue--Stieltjes measure generated on $(0,\infty)$ by the
nondecreasing right derivative of $\psi$, and define $\nu_-$ in the same
way from the convex function $x\mapsto\psi(-x)$. For $x\ge0$,
\[
 \begin{aligned}
 \psi(x)
 &=\int_0^x\psi'_+(u)\,\dd u\\
 &=a_+x+\int_{(0,x]}(x-t)\,\nu_+(\dd t),
 \end{aligned}
\]
where the second equality follows by inserting
$\psi'_+(u)=a_++\nu_+((0,u])$ and applying Tonelli. The reflected formula
holds on the negative half-line. Hence, for every $x\in\R$,
\[
 \begin{aligned}
 \psi(x)
 ={}&a_+x_++a_-x_-\\
 &+\int_{(0,\infty)}(x-t)_+\,\nu_+(\dd t)
 +\int_{(0,\infty)}(-x-t)_+\,\nu_-(\dd t).
 \end{aligned}
\]
The measures $\nu_\pm$ are positive and locally finite. Integrating
\eqref{eq:erasure-positive-hinge} and its reflected version against this
representation proves the result by Tonelli's theorem.
\end{proof}

\section{Thinning at small squared Hellinger distance}\label{app:nearby}
\label{app:small-distance}
\subsection{Completion and translation bounds}

We transfer Theorem~\ref{thm:centered-erasure} to Hellinger measures by
completion at zero. The completed variables satisfy $\E\sinh X_i=0$;
their ordinary means and the resulting translations are controlled by
the total squared Hellinger distance.


Let $\eta_1,\ldots,\eta_n\in\mathcal A$, and put
\[
 d_i:=1-\eta_i(\R),
 \qquad
 \Delta:=\sum_i d_i.
\]
Assume throughout that $\Delta\le1/4$, and define the probability laws
\begin{equation}
 \widehat\eta_i:=\eta_i+d_i\delta_0.\label{eq:completion}
\end{equation}
Then
\[
 \widehat\eta_i(\R)=1,
 \qquad
 \int e^{\pm x}\,\widehat\eta_i(\dd x)=1+d_i.
\]
Let $X_i\sim\widehat\eta_i$ independently and put
\[
 S:=\sum_iX_i.
\]
Since
\[
 \int\sinh x\,\widehat\eta_i(\dd x)=0,
\]
we have
\begin{equation}
 \E\sinh X_i=0.\label{eq:sinh-centering}
\end{equation}
Moreover,
\[
 \E(\cosh X_i-1)=d_i.
\]
The elementary inequality
\[
 |\sinh x-x|\le\cosh x-1
\]
therefore gives
\begin{equation}
 m_i:=\E X_i,
 \qquad
 |m_i|\le d_i.\label{eq:mean-bound}
\end{equation}

Put
\[
 \eta_*:=\eta_1*\cdots*\eta_n,
 \qquad
 \widehat Q:=\widehat\eta_1*\cdots*\widehat\eta_n,
\]
and
\begin{equation}
 A_\Delta:=\prod_i(1+d_i),
 \qquad
 \overline\eta:=A_\Delta^{-1}\widehat Q.\label{eq:normalized-completion}
\end{equation}
Then $\overline\eta\in\mathcal A$.

\begin{lemma}[Divergence after completion]
\label{lem:completion-energy}
One has
\begin{equation}
 \Ff(\overline\eta)\le\Ff(\eta_*).\label{eq:completion-energy}
\end{equation}
\end{lemma}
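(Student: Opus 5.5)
The plan is to expand the completed convolution $\widehat Q$ by multilinearity and to compare each resulting term with $\eta_*$ using convolution monotonicity. Since $\Ff$ is linear in the measure, this reduces \eqref{eq:completion-energy} to a weighted average of terms, each dominated by $\Ff(\eta_*)$.

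First, the expansion. Convolution is multilinear, so
\[
 \widehat Q
 =\mathop{\ast}_{i=1}^n(\eta_i+d_i\delta_0)
 =\sum_{S\subseteq[n]}\Bigl(\prod_{i\notin S}d_i\Bigr)\eta_S,
 \qquad
 \eta_S:=\mathop{\ast}_{i\in S}\eta_i,
\]
with $\eta_\varnothing=\delta_0$. Each $\eta_S$ is a finite positive measure and $h\ge0$, so $\Ff$ is additive and positively homogeneous on such measures, with values in $[0,\infty]$. Therefore
\[
 \Ff(\widehat Q)=\sum_{S}\Bigl(\prod_{i\notin S}d_i\Bigr)\Ff(\eta_S).
\]
The weights sum to $\prod_i(1+d_i)=A_\Delta$.

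Second, the comparison $\Ff(\eta_S)\le\Ff(\eta_*)$ for every $S$. For $S=\varnothing$ it holds because $\Ff(\delta_0)=h(0)=0$ by Lemma~\ref{lem:h-convex}. For $S\neq\varnothing$, Proposition~\ref{prop:hellinger} shows that $\eta_S=\eta_{\mu_S}$ with $\mu_S:=\Astar_{i\in S}\mu_{\eta_i}$, a positive mean-one law. Since convolution is commutative, $\eta_*=\eta_{\mu_S\star\mu_{S^c}}$, where $\mu_{S^c}$ is the product over the remaining indices (and $\delta_1$ if $S^c$ is empty). Lemma~\ref{lem:monotonicity} then gives
\[
 \Ff(\eta_S)=\Df(\mu_S)\le\Df(\mu_S\star\mu_{S^c})=\Ff(\eta_*),
\]
valid in $[0,\infty]$.

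Combining the two steps gives $\Ff(\widehat Q)\le A_\Delta\Ff(\eta_*)$. Dividing by $A_\Delta$ and using linearity of $\Ff$ yields \eqref{eq:completion-energy}.

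There is no serious obstacle. The only points needing care are bookkeeping: the atom at zero contributes nothing because $h(0)=0$, and the inequalities may be taken in $[0,\infty]$ since all terms are nonnegative.
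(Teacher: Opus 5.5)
Your proposal is correct and follows the paper's proof essentially verbatim: expand $\widehat Q$ multilinearly over subsets, bound each $\Ff(\eta_S)$ by $\Ff(\eta_*)$ via convolution monotonicity (Lemma~\ref{lem:monotonicity}), note that the weights sum to $A_\Delta$, and divide. Your separate treatment of $S=\varnothing$ via $h(0)=0$ is a harmless extra detail, since the paper's monotonicity argument already covers that case with $\eta_\varnothing=\delta_0\in\mathcal A$.
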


\begin{proof}
For $A\subseteq[n]$, put
\[
 \eta_A:=\mathop{*}_{i\in A}\eta_i,
 \qquad
 \eta_\varnothing:=\delta_0.
\]
Expanding the completed convolution,
\[
 \widehat Q
 =
 \sum_{A\subseteq[n]}
 \left(\prod_{i\notin A}d_i\right)\eta_A.
\]
Since $\eta_A\in\mathcal A$ is a convolution factor of $\eta_*$,
convolution monotonicity gives
\[
 \Ff(\eta_A)\le\Ff(\eta_*).
\]
Hence
\[
 \Ff(\widehat Q)
 \le
 \left[\sum_{A\subseteq[n]}\prod_{i\notin A}d_i\right]
 \Ff(\eta_*)
 =A_\Delta\Ff(\eta_*).
\]
Divide by $A_\Delta$.
\end{proof}

\begin{proposition}[Bounds under small translations]
\label{prop:endpoint-reserve}
For every $|s|\le\Delta$,
\begin{equation}
 \E h(S+s)
 \le
 C_{\mathrm{end}}(c)\,\Ff(\eta_*),
 \label{eq:endpoint-reserve}
\end{equation}
where one may take
\begin{equation}
 C_{\mathrm{end}}(c)
 :=2e^{1/4}c\cosh(1/4)(2c^2+1).
 \label{eq:Cend-explicit}
\end{equation}
\end{proposition}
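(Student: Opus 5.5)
The plan is to control a translation by the smallest admissible symmetric two-point kernel, and then to pay for that kernel using the total Hellinger distance $\Delta$ itself. Put $L:=\Delta$; the case $\Delta=0$ is trivial. For $|s|\le L$, convexity of $h$ (Lemma~\ref{lem:h-convex}) bounds $h(x+s)$ by the larger of its values at $x\pm L$, hence by $h(x+L)+h(x-L)$. With $\omega_L=(\delta_L+\delta_{-L})/(2\cosh L)\in\mathcal A$, this reads
\[
 h(x+s)\le 2\cosh L\int h(x+y)\,\omega_L(\dd y).
\]
Integrating against $\widehat Q$ gives
\[
 \E h(S+s)\le 2\cosh(1/4)\,\Ff(\widehat Q*\omega_L)
 =2\cosh(1/4)A_\Delta\,\Ff(\overline\eta*\omega_L).
\]
Here $\widehat Q$ is the law of $S$, which is indeed a probability law, and the identity uses the normalization \eqref{eq:normalized-completion}. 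I will also use $A_\Delta\le e^{\Delta}\le e^{1/4}$.

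Next I apply the reverse product bound \eqref{eq:reverse-product}, transported to Hellinger coordinates via Proposition~\ref{prop:hellinger}, together with Lemma~\ref{lem:completion-energy}:
\[
 \Ff(\overline\eta*\omega_L)\le c\bigl[\Ff(\eta_*)+\Ff(\omega_L)\bigr],
 \qquad
 \Ff(\omega_L)=\tfrac12\mathcal M_f(\Delta).
\]
The identity for $\Ff(\omega_L)$ was computed in the proof of Lemma~\ref{lem:symmetric-moderation}. It then remains to show that $\tfrac12\mathcal M_f(\Delta)\le 2c^2\,\Ff(\eta_*)$. This would give exactly
\[
 C_{\mathrm{end}}=2e^{1/4}c\cosh(1/4)(2c^2+1).
\]

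The main step is this charging of $\mathcal M_f(\Delta)$ to the product divergence, and I will follow the argument of Theorem~\ref{thm:large-defect} without the fair-bit mixing. Let $(P,Q)$ be the product experiment whose likelihood law corresponds to $\eta_*$. Its affinity is $\prod_i(1-d_i)\le e^{-\Delta}$, so
\[
 \TV(P,Q)\ge 1-e^{-\Delta}\ge e^{-1/4}\Delta,
\]
using $\Delta\le1/4$. Contract to the binary experiment $\one_A$, where $A$ is an optimal set, and let $U$ be its centered posterior score. Then $r:=\E U_+=\tfrac12\TV\ge \Delta/(2e^{1/4})$. The Jensen argument given there, together with data processing and Lemma~\ref{lem:posterior-identities}, yields
\[
 \mathcal M_f(\operatorname{arctanh} r)\le\Ff(\eta_*).
\]

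Since $\operatorname{arctanh} r\ge r\ge \Delta/(2e^{1/4})>\Delta/4$, two applications of the doubling bound \eqref{eq:M-doubling}, together with monotonicity of $\mathcal M_f$, give
\[
 \mathcal M_f(\Delta)\le(2c)^2\,\mathcal M_f(\operatorname{arctanh} r)\le 4c^2\,\Ff(\eta_*).
\]
This is the required bound $\tfrac12\mathcal M_f(\Delta)\le 2c^2\Ff(\eta_*)$. The only delicate points are the elementary numerical checks, namely $2e^{1/4}<4$ and $1-e^{-\Delta}\ge e^{-1/4}\Delta$ on $[0,1/4]$, and the fact that the binary contraction applies to the possibly unnormalized experiment. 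The latter causes no trouble, because $\eta_*\in\mathcal A$ corresponds to a genuine product likelihood law.
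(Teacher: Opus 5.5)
Your proposal is correct and follows the paper's architecture: a symmetric two-point kernel, the reverse product bound \eqref{eq:reverse-product}, Lemma~\ref{lem:completion-energy}, Jensen anchoring of $\mathcal M_f$ at $\operatorname{arctanh} r$, and two doublings, which gives exactly $C_{\mathrm{end}}(c)$. The differences are minor. The paper anchors on the normalized completion $\overline\eta$ and uses its defect $1-A_\Delta^{-1}\ge\Delta/(1+\Delta)$ together with $\E|U|\ge\E U^2$ to get $r\ge\Delta/(2(1+\Delta))$, whereas you anchor on $\eta_*$ through affinity and total variation. Your binary contraction is not actually needed there, since the posterior score of $\eta_*$ already has $\E U_+=\tfrac12\TV(P,Q)$. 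Finally, the paper uses the kernel $\omega_s$ with $h\ge0$ in place of your single kernel $\omega_\Delta$ with convexity of $h$.
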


\begin{proof}
If $\Delta=0$, then
\[
 0=d_i=\int(\cosh x-1)\,\eta_i(\dd x)
\]
for every $i$, whence $\eta_i=\delta_0$ and the assertion follows.
Assume $\Delta>0$.

The total mass of $\overline\eta$ is $A_\Delta^{-1}$, so its squared
Hellinger distance is
\[
 \delta:=1-\overline\eta(\R)=1-\frac1{A_\Delta}.
\]
Because
\[
 A_\Delta=\prod_i(1+d_i)\ge1+\sum_i d_i=1+\Delta,
\]
we have
\begin{equation}
 \delta\ge\frac{\Delta}{1+\Delta}.\label{eq:normalized-defect}
\end{equation}

Let $U$ be the centered posterior score associated with
$\overline\eta$, and put
\[
 r:=\E U_+=\E U_-=\frac12\E|U|.
\]
Since $|U|\ge U^2$, Lemma~\ref{lem:posterior-identities} and
\eqref{eq:normalized-defect} imply
\begin{equation}
 r
 \ge\frac12\E U^2
 \ge\frac\delta2
 \ge\frac{\Delta}{2(1+\Delta)}.\label{eq:r-lower}
\end{equation}
Let
\[
 p_+:=\Pp(U>0),
 \qquad
 u_+:=\E[U\mid U>0].
\]
Then $r=p_+u_+$. Convexity of $\Phi_f$ and $\Phi_f(0)=0$ give
\[
 \Phi_f(r)
 =\Phi_f(p_+u_++(1-p_+)0)
 \le p_+\Phi_f(u_+)
 \le\E[\Phi_f(U)\one_{\{U>0\}}].
\]
The reflected argument gives
\[
 \Phi_f(-r)
 \le\E[\Phi_f(U)\one_{\{U<0\}}].
\]
Consequently,
\begin{equation}
 \mathcal M_f(\operatorname{arctanh}r)
 =\Phi_f(r)+\Phi_f(-r)
 \le\Ff(\overline\eta).\label{eq:M-anchor}
\end{equation}

For $|s|\le\Delta$, \eqref{eq:r-lower} gives
\[
 \frac{|s|}{\operatorname{arctanh}r}
 \le\frac{\Delta}{r}
 \le2(1+\Delta)
 \le\frac52.
\]
Monotonicity and two applications of \eqref{eq:M-doubling} therefore give
\begin{equation}
 \mathcal M_f(|s|)
 \le4c^2\mathcal M_f(\operatorname{arctanh}r)
 \le4c^2\Ff(\eta_*),
 \label{eq:M-endpoint}
\end{equation}
where Lemma~\ref{lem:completion-energy} was used last.

Consider the symmetric two-point measure
\[
 \omega_s
 =
 \frac{\delta_s+\delta_{-s}}{2\cosh s}.
\]
Equation \eqref{eq:M-endpoint} gives
\[
 \Ff(\omega_s)
 =\frac12\mathcal M_f(|s|)
 \le2c^2\Ff(\eta_*).
\]
Furthermore,
\[
 \omega_s*\overline\eta
 =
 \frac{\delta_s*\overline\eta+\delta_{-s}*\overline\eta}
      {2\cosh s}.
\]
Since $h\ge0$, the product comparison
\eqref{eq:reverse-product} gives
\begin{align*}
 \Ff(\delta_s*\overline\eta)
 &\le2\cosh s\,\Ff(\omega_s*\overline\eta)\\
 &\le2c\cosh(1/4)
   [\Ff(\omega_s)+\Ff(\overline\eta)]\\
 &\le2c\cosh(1/4)(2c^2+1)\Ff(\eta_*).
\end{align*}
Finally, $\widehat Q=A_\Delta\overline\eta$ and
$A_\Delta\le e^{1/4}$, so
\[
 \E h(S+s)
 =A_\Delta\Ff(\delta_s*\overline\eta)
 \le C_{\mathrm{end}}(c)\Ff(\eta_*).
\]
\end{proof}

\subsection{Centering the thinned sum}

The following estimate controls the random translation introduced by
centering, with a factor of $q$.


\begin{lemma}[Random translation]
\label{lem:random-drift}
Let $Z_1,\ldots,Z_n$ be independent centered real variables, let
$m_1,\ldots,m_n\in\R$, and let $B_i\sim\Ber(q)$ be independent of
one another and of the $Z_i$. Put
\[
 Z:=\sum_iZ_i,
 \qquad
 A:=\sum_iB_iZ_i,
 \qquad
 R:=\sum_iB_im_i,
\]
and
\[
 M_+:=\sum_i(m_i)_+,
 \qquad
 M_-:=\sum_i(-m_i)_+.
\]
For every nonnegative convex function $h$,
\begin{equation}
 \E h(A+R)
 \le
 \E h(A)
 +q\,\E h(Z+M_+)
 +q\,\E h(Z-M_-).
 \label{eq:random-drift}
\end{equation}
\end{lemma}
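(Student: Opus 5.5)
The plan is to condition on the thinning pattern $B=(B_1,\ldots,B_n)$ and to use convexity of $h$ in the translation variable. The random shift $R$ always lies in $[-M_-,M_+]$. Convexity then splits $h(A+R)$ into the unshifted term $h(A)$ plus endpoint terms $h(A+M_+)$ and $h(A-M_-)$. Their random weights are proportional to the positive and negative parts of $R$, and each such weight is a sum of terms carrying a factor $B_i$. That factor is what produces the extra $q$. The remaining convex-order step replaces the thinned sum by the full sum $Z$.

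First I establish a pointwise inequality. Fix $a\in\R$ and $r\in[-M_-,M_+]$. If $0\le r\le M_+$ and $M_+>0$, convexity of $h$ on $[a,a+M_+]$ together with $h\ge0$ gives
\[
 h(a+r)\le\Bigl(1-\frac r{M_+}\Bigr)h(a)+\frac r{M_+}h(a+M_+)\le h(a)+\frac{r_+}{M_+}h(a+M_+).
\]
The case $r\le0$ is symmetric, giving $h(a+r)\le h(a)+\frac{r_-}{M_-}h(a-M_-)$. If $M_+=0$ or $M_-=0$, the corresponding term is absent, since then $r_+=0$ or $r_-=0$. Thus in all cases
\[
 h(A+R)\le h(A)+\frac{R_+}{M_+}h(A+M_+)+\frac{R_-}{M_-}h(A-M_-),
\]
with the convention $0/0=0$. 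Next I bound $R_+\le\sum_iB_i(m_i)_+$ and $R_-\le\sum_iB_i(-m_i)_+$, which holds because $R=\sum_iB_i(m_i)_+-\sum_iB_i(-m_i)_+$ with both sums nonnegative. Taking expectations then reduces the lemma to showing, for each $i$,
\[
 \E\bigl[B_i\,h(A+M_+)\bigr]\le q\,\E h(Z+M_+),
\]
together with the analogous bound for $h(A-M_-)$. Given these, averaging with the weights $(m_i)_+/M_+$, which sum to one, yields the middle term of \eqref{eq:random-drift}, and the last term follows in the same way.

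For the displayed bound, I use independence: $\E[B_ih(A+M_+)]=q\,\E h(A^{(i)}+M_+)$, where $A^{(i)}:=Z_i+\sum_{j\ne i}B_jZ_j$ is the sum with $B_i$ set to one. As in the proof of Theorem~\ref{thm:centered-erasure}, each $B_jZ_j\cx Z_j$ for centered $Z_j$. Applying Lemma~\ref{lem:cx-convolution} coordinatewise gives $A^{(i)}\cx Z$, so $\E h(A^{(i)}+M_+)\le\E h(Z+M_+)$, because $x\mapsto h(x+M_+)$ is convex.

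The only delicate point is the first step. One must extract the weight linearly in $R_+$, without any factor in front of $h(A)$, so that the bound decomposes over the coordinates $i$. Nonnegativity of $h$ is what allows discarding the coefficient $1-r/M_+$. The remaining points are integrability conventions: all quantities lie in $[0,\infty]$, and the convex-order step is valid for nonnegative convex functions by monotone approximation.
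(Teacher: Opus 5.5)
Your proposal is correct and follows the paper's proof essentially step for step. It uses the same pathwise convex split $h(A+R)\le h(A)+\frac{R_+}{M_+}h(A+M_+)+\frac{R_-}{M_-}h(A-M_-)$, dropping the coefficient of $h(A)$ by nonnegativity, then the bound $R_+\le\sum_iB_i(m_i)_+$, conditioning on $B_i=1$, and the convex-order comparison $Z_i+\sum_{j\ne i}B_jZ_j\cx Z$ via Lemma~\ref{lem:cx-convolution}.
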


\begin{proof}
For every realization, $-M_-\le R\le M_+$. Define
\[
 r_+:=
 \begin{cases}
  R_+/M_+,&M_+>0,\\
  0,&M_+=0,
 \end{cases}
 \qquad
 r_-:=
 \begin{cases}
  (-R)_+/M_-,&M_->0,\\
  0,&M_-=0.
 \end{cases}
\]
At most one of $r_+,r_-$ is nonzero, $0\le r_\pm\le1$, and
\[
 A+R
 =(1-r_+-r_-)A+r_+(A+M_+)+r_-(A-M_-).
\]
Convexity and nonnegativity imply
\begin{equation}
 h(A+R)
 \le
 h(A)+r_+h(A+M_+)+r_-h(A-M_-).\label{eq:drift-pathwise}
\end{equation}

If $M_+>0$, then
\[
 r_+
 \le
 \sum_{i:m_i>0}\frac{m_i}{M_+}B_i.
\]
Hence
\[
 \E[r_+h(A+M_+)]
 \le
 \sum_{i:m_i>0}\frac{m_i}{M_+}
 \E[B_ih(A+M_+)].
\]
Conditioning on $B_i=1$,
\[
 \E[B_ih(A+M_+)]
 =q\,\E h\left(M_++Z_i+\sum_{j\ne i}B_jZ_j\right).
\]
Since $B_jZ_j\cx Z_j$, Lemma~\ref{lem:cx-convolution} gives
\[
 Z_i+\sum_{j\ne i}B_jZ_j\cx\sum_jZ_j=Z.
\]
Applying the convex function $x\mapsto h(M_++x)$ gives
\[
 \E[B_ih(A+M_+)]
 \le q\,\E h(Z+M_+).
\]
The weights $m_i/M_+$ sum to one, so
\[
 \E[r_+h(A+M_+)]
 \le q\,\E h(Z+M_+).
\]
The negative part is identical:
\[
 \E[r_-h(A-M_-)]
 \le q\,\E h(Z-M_-).
\]
Insert these estimates in \eqref{eq:drift-pathwise}.
\end{proof}

\subsection{Thinning Hellinger measures}

\begin{theorem}[Thinning at small squared Hellinger distance]
\label{thm:Hellinger-erasure}
Let $\eta_1,\ldots,\eta_n\in\mathcal A$, and assume
\[
 \Delta:=\sum_i[1-\eta_i(\R)]\le\frac14.
\]
Then, for every $q\in[0,1]$,
\begin{equation}
 \Ff\!\left(
  \mathop{*}_{i=1}^n[(1-q)\delta_0+q\eta_i]
 \right)
 \le
 L(c)\sqrt q\,
 \Ff(\eta_1*\cdots*\eta_n).
 \label{eq:Hellinger-erasure}
\end{equation}
\end{theorem}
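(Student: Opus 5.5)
The plan is to work with the completed probability laws $\widehat\eta_i=\eta_i+d_i\delta_0$ and the variables $X_i\sim\widehat\eta_i$ from \eqref{eq:completion}. Centering them puts us in the setting of Theorem~\ref{thm:centered-erasure}, and the translations this produces are absorbed by Proposition~\ref{prop:endpoint-reserve} and Lemma~\ref{lem:random-drift}.

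First, rewrite the left side through completion. The thinned measure $(1-q)\delta_0+q\eta_i$ has defect $qd_i$, and its completion is $(1-q)\delta_0+q\widehat\eta_i$. This is exactly the law of $B_iX_i$ with $B_i\sim\Ber(q)$. Lemma~\ref{lem:completion-energy}, applied to the thinned family (whose total defect $q\Delta$ is still at most $1/4$), gives
\[
 \Ff\Bigl(\mathop{*}_i[(1-q)\delta_0+q\eta_i]\Bigr)
 \ge A_{q\Delta}^{-1}\E h\Bigl(\sum_iB_iX_i\Bigr).
\]
This is the wrong direction. Instead, expand the thinned convolution directly: every term is a nonnegative sub-measure of the law of $\sum_iB_iX_i$, since $\eta_i\le\widehat\eta_i$. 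Because $h\ge0$, the left side is at most $\E h(\sum_iB_iX_i)$. It therefore suffices to bound $\E h(\sum_iB_iX_i)$.

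Next, center. Put $Z_i=X_i-m_i$ with $m_i=\E X_i$, so $|m_i|\le d_i$ by \eqref{eq:mean-bound}. Write $\sum_iB_iX_i=A+R$ with $A=\sum_iB_iZ_i$ and $R=\sum_iB_im_i$. Lemma~\ref{lem:random-drift}, with the convex nonnegative $h$ of Lemma~\ref{lem:h-convex}, gives
\[
 \E h(A+R)\le \E h(A)+q\,\E h(Z+M_+)+q\,\E h(Z-M_-).
\]
Here $Z=S-\sum_im_i$, so $Z\pm M_\pm=S+s_\pm$ with $|s_\pm|\le\sum_i|m_i|\le\Delta$. Proposition~\ref{prop:endpoint-reserve} then bounds the last two terms by $2qC_{\mathrm{end}}(c)\Ff(\eta_*)$, which is at most $2C_{\mathrm{end}}\sqrt q\,\Ff(\eta_*)$.

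For the main term, apply Theorem~\ref{thm:centered-erasure} with $\psi=h$; it is finite, convex, nonnegative, and $h(0)=0$. This gives $\E h(A)\le40\sqrt q\,\E h(Z)$. Since $Z=S+s$ with $s=-\sum_im_i$ and $|s|\le\Delta$, Proposition~\ref{prop:endpoint-reserve} again gives $\E h(Z)\le C_{\mathrm{end}}(c)\Ff(\eta_*)$. Altogether we get \eqref{eq:Hellinger-erasure} with $L(c)=42\,C_{\mathrm{end}}(c)$.

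The step needing most care is the first one, the comparison of the thinned Hellinger convolution with $\E h(\sum_iB_iX_i)$. One must check that dropping completion mass only removes nonnegative contributions. This works because $h\ge0$ and the completed thinned product dominates the thinned product as measures. If integrability or finite-support issues arise, we stay within positive finitely supported laws as stipulated, so all expectations are finite sums.
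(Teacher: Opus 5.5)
Your proposal is correct and matches the paper's proof step for step: complete and center the factors, use Theorem~\ref{thm:centered-erasure} on the centered part, use Lemma~\ref{lem:random-drift} with Proposition~\ref{prop:endpoint-reserve} for the translations, and finish with the domination $(1-q)\delta_0+q\eta_i\le(1-q)\delta_0+q\widehat\eta_i$ and $h\ge0$, giving the same constant $L(c)=42C_{\mathrm{end}}(c)$. Your aside about Lemma~\ref{lem:completion-energy} does point the wrong way, as you noticed, and can simply be deleted.
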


\begin{proof}
Complete the factors as in \eqref{eq:completion}, let
$X_i\sim\widehat\eta_i$ independently, and put
\[
 m_i:=\E X_i,
 \qquad
 Z_i:=X_i-m_i.
\]
Then $\E Z_i=0$ and, by \eqref{eq:mean-bound},
\[
 \sum_i|m_i|\le\Delta.
\]
Set
\[
 M:=\sum_i m_i,
 \qquad
 M_+:=\sum_i(m_i)_+,
 \qquad
 M_-:=\sum_i(-m_i)_+.
\]
Thus $|M|,M_+,M_-\le\Delta$. Let $B_i\sim\Ber(q)$ independently and
write
\[
 A_B:=\sum_iB_iZ_i,
 \qquad
 R_B:=\sum_iB_im_i.
\]
By Theorem~\ref{thm:centered-erasure},
\[
 \E h(A_B)
 \le
 40\sqrt q\,
 \E h\left(\sum_iZ_i\right).
\]
Since $\sum_iZ_i=S-M$ and $|M|\le\Delta$,
Proposition~\ref{prop:endpoint-reserve} gives
\begin{equation}
 \E h(A_B)
 \le
 40C_{\mathrm{end}}(c)\sqrt q\,\Ff(\eta_*),
 \qquad
 \eta_*:=\eta_1*\cdots*\eta_n.\label{eq:centered-part-bound}
\end{equation}

Apply Lemma~\ref{lem:random-drift}:
\[
 \E h(A_B+R_B)
 \le
 \E h(A_B)
 +q\,\E h(Z+M_+)
 +q\,\E h(Z-M_-),
\]
where $Z=\sum_iZ_i=S-M$. Since $M=M_+-M_-$,
\[
 Z+M_+=S+M_-,
 \qquad
 Z-M_-=S-M_+.
\]
Both translations have absolute value at most $\Delta$, and hence
Proposition~\ref{prop:endpoint-reserve} yields
\[
 \E h(Z+M_+)+\E h(Z-M_-)
 \le
 2C_{\mathrm{end}}(c)\Ff(\eta_*).
\]
Together with \eqref{eq:centered-part-bound} and $q\le\sqrt q$,
\begin{equation}
 \E h\left(\sum_iB_iX_i\right)
 \le
 42C_{\mathrm{end}}(c)\sqrt q\,\Ff(\eta_*).\label{eq:completed-erasure}
\end{equation}

The law of $\sum_iB_iX_i$ is
\[
 \widehat\eta^{[q]}
 :=
 \mathop{*}_{i=1}^n[(1-q)\delta_0+q\widehat\eta_i].
\]
For each $i$,
\[
 (1-q)\delta_0+q\widehat\eta_i
 =(1-q+qd_i)\delta_0+q\eta_i
 \ge
 (1-q)\delta_0+q\eta_i
\]
as positive measures. Convolution preserves positive-measure
domination. Since $h\ge0$,
\[
 \Ff\!\left(
  \mathop{*}_{i=1}^n[(1-q)\delta_0+q\eta_i]
 \right)
 \le
 \Ff(\widehat\eta^{[q]})
 =
 \E h\left(\sum_iB_iX_i\right).
\]
Use \eqref{eq:completed-erasure} and take
$L(c)=42C_{\mathrm{end}}(c)$.
\end{proof}

Affineness of the Hellinger correspondence and
$\eta_{\delta_1}=\delta_0$ give the following consequence.

\begin{corollary}[Thinning likelihood ratios]
\label{cor:likelihood-erasure}
Let $\mu_1,\ldots,\mu_n$ be positive mean-one laws satisfying
$\sum_i d(\mu_i)\le1/4$. Then, for every $q\in[0,1]$,
\begin{equation}
 \Df\!\left(
  \Astar_{i=1}^n[(1-q)\delta_1+q\mu_i]
 \right)
 \le
 L(c)\sqrt q\,
 \Df(\mu_1\star\cdots\star\mu_n).
 \label{eq:likelihood-erasure}
\end{equation}
\end{corollary}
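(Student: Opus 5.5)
The plan is to translate the statement into Hellinger coordinates and then apply Theorem~\ref{thm:Hellinger-erasure}. Put $\eta_i:=\eta_{\mu_i}\in\mathcal A$. By Proposition~\ref{prop:hellinger}, the map $\mu\mapsto\eta_\mu$ is an affine bijection from positive mean-one laws onto $\mathcal A$. Since $\delta_1$ is itself a positive mean-one law with $\eta_{\delta_1}=\delta_0$ (indeed $\int\varphi(\tfrac12\log 1)\sqrt1\,\delta_1(\dd t)=\varphi(0)$), affineness gives
\[
 \eta_{(1-q)\delta_1+q\mu_i}=(1-q)\delta_0+q\eta_i .
\]
Here $(1-q)\delta_1+q\mu_i$ is again a positive mean-one law, so the correspondence applies to it.

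Next we use the multiplicativity $\eta_{\mu\star\nu}=\eta_\mu*\eta_\nu$, iterated over $n$ factors. This identifies the Hellinger measure of $\Astar_i[(1-q)\delta_1+q\mu_i]$ with $\mathop{*}_i[(1-q)\delta_0+q\eta_i]$. Likewise, the Hellinger measure of $\mu_1\star\cdots\star\mu_n$ is $\eta_1*\cdots*\eta_n$. The identity $\Df(\mu)=\Ff(\eta_\mu)$ then converts both sides of \eqref{eq:likelihood-erasure} into the two sides of \eqref{eq:Hellinger-erasure}.

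It remains to check the hypothesis of Theorem~\ref{thm:Hellinger-erasure}. By \eqref{eq:defect}, $1-\eta_i(\R)=d(\mu_i)$, so $\Delta=\sum_i d(\mu_i)\le1/4$. The theorem then yields the claim with the same constant $L(c)$.

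There is essentially no obstacle here. The only point needing care is confirming that the bijection and the convolution identity apply to the mixtures, and this holds because mixtures of positive mean-one laws are again positive mean-one laws.
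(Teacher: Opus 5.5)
Your proposal is correct and matches the paper's argument: the paper obtains this corollary directly from Theorem~\ref{thm:Hellinger-erasure} via affineness of the Hellinger correspondence, $\eta_{\delta_1}=\delta_0$, the convolution identity, and $\Df(\mu)=\Ff(\eta_\mu)$, with $d(\mu_i)=1-\eta_{\mu_i}(\R)$ supplying the hypothesis $\Delta\le1/4$. Your check that the mixtures $(1-q)\delta_1+q\mu_i$ are again positive mean-one laws is the only point the paper leaves implicit.
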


\section{Poissonization and homogenization}\label{app:poisson}

Poissonization converts the sampling multiplicities into independent
$\Pois(1)$ variables. Their level sets are Bernoulli thinnings with
parameters $q_k=\Pp\{\Pois(1)\ge k\}$. We combine these levels along a
binary tree whose depths are chosen by Kraft's inequality; the
factorial decay of $q_k$ controls the accumulated comparison constants.


For a mean-one law $\rho$ and $s\ge0$, define the compound-Poisson law
\[
 \Pi_s(\rho)
 :=
 e^{-s}\sum_{r=0}^\infty\frac{s^r}{r!}\rho^{\star r}.
\]

\begin{lemma}[Poisson median bound]
\label{lem:Poisson-median}
If $N\sim\Pois(n)$ with integer $n\ge1$, then
\[
 \Pp(N\ge n)\ge\frac12.
\]
Consequently,
\begin{equation}
 \Df(\rho^{\star n})
 \le
 2\Df(\Pi_n(\rho)).\label{eq:depoisson-lower}
\end{equation}
\end{lemma}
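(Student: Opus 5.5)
The proof has two parts: the median bound for $\Pois(n)$, and the comparison \eqref{eq:depoisson-lower}, which follows from it via convolution monotonicity.

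\emph{Median bound.} I would use the classical fact that the median of $\Pois(n)$ equals $n$ for integer $n$, i.e.\ $\Pp(N\le n-1)\le 1/2$ (Teicher's inequality $\Pp(N\le n)>1/2$ is the companion statement). I would write the lower tail as an incomplete gamma integral:
\[
 \Pp(N\le n-1)=\frac1{(n-1)!}\int_n^\infty x^{n-1}e^{-x}\,\dd x .
\]
So it suffices to show that $\Gamma(n)=(n-1)!$ splits with at least half its mass on $[0,n]$. Equivalently, a $\mathrm{Gamma}(n,1)$ variable satisfies $\Pp(G\le n)\ge 1/2$, i.e.\ its median is at most its mean $n$. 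For $n=1$ this is $1-e^{-1}\ge1/2$. For general $n$ I would cite the known bound $n-1/3<\mathrm{median}<n$ for the gamma distribution (Choi; Chen--Rubin). A self-contained alternative is the standard comparison of the densities $x^{n-1}e^{-x}$ at $n+u$ and $n-u$, namely
\[
 (1+u/n)^{n-1}e^{-u}\le(1-u/n)^{n-1}e^{u},\qquad 0\le u\le n,
\]
which reduces to $2u\ge (n-1)\log\frac{1+u/n}{1-u/n}$ and fails near $u=0$. The reflection argument therefore needs care, and this median fact is the only genuine obstacle. I would simply invoke the classical result $\Pp(\Pois(n)\ge n)\ge1/2$.

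\emph{Divergence bound.} The right side of \eqref{eq:depoisson-lower} is linear in the mixture:
\[
 \Df(\Pi_n(\rho))=\sum_{r\ge0}\Pp(N=r)\,\Df(\rho^{\star r}).
\]
For $r\ge n$, the law $\rho^{\star n}$ is a convolution factor of $\rho^{\star r}=\rho^{\star n}\star\rho^{\star(r-n)}$, and both factors are mean one. Lemma~\ref{lem:monotonicity} therefore gives $\Df(\rho^{\star n})\le\Df(\rho^{\star r})$. Since $f\ge0$, we may drop the terms with $r<n$ and obtain
\[
 \Df(\Pi_n(\rho))\ge\Pp(N\ge n)\,\Df(\rho^{\star n})\ge\tfrac12\Df(\rho^{\star n}).
\]
All quantities lie in $[0,\infty]$, so no integrability issues arise.
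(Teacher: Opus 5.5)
Your proof is correct. The divergence step is exactly the paper's: linearity of $\Df$ over the Poisson mixture, Lemma~\ref{lem:monotonicity} for $r\ge n$, and $f\ge0$ to drop the terms with $r<n$.

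The only difference is the median bound. You take it from the literature: Ramanujan--Szeg\H{o}--Watson, or the Chen--Rubin bound $\mathrm{median}<n$ for $\mathrm{Gamma}(n,1)$ combined with $\Pp(N\le n-1)=\Pp(G>n)$. That is legitimate, and it even gives the strict inequality. The paper instead uses a short self-contained discrete reflection, which is the argument you were reaching for. Rather than reflecting the gamma density about $n$, reflect the Poisson masses about $n-\tfrac12$. With $p_k=e^{-n}n^k/k!$ and $1\le j\le n$,
\[
 \frac{p_{n+j-1}}{p_{n-j}}=\frac{n^{2j-1}}{n\prod_{\ell=1}^{j-1}(n^2-\ell^2)}\ge1,
\]
so $\Pp(N\le n-1)\le\Pp(n\le N\le 2n-1)\le\Pp(N\ge n)$.

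This works because the unpaired mass $\{N\ge 2n\}$ falls on the side you want to be large. In your gamma formulation the unpaired tail $(2n,\infty)$ falls on the wrong side. So a pointwise density comparison on $[0,2n]$ could not have closed the argument even if it held everywhere.

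Your diagnosis of that comparison is also inverted. The inequality $2u\ge(n-1)\log\frac{1+u/n}{1-u/n}$ holds for small $u$, since the slopes at $0$ are $2$ versus $2(n-1)/n$. It fails only as $u\to n$.
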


\begin{proof}
Write $p_k=e^{-n}n^k/k!$. For $j=1,\ldots,n$,
\[
 \frac{p_{n+j-1}}{p_{n-j}}
 =
 \frac{n^{2j-1}}
 {(n-j+1)(n-j+2)\cdots(n+j-1)}.
\]
The denominator equals
\[
 n\prod_{\ell=1}^{j-1}(n^2-\ell^2)
 \le n^{2j-1}.
\]
Thus $p_{n-j}\le p_{n+j-1}$. Pairing the masses below $n$ with those
from $n$ through $2n-1$ proves the median estimate.

By linearity of $\Df$ in its law,
\[
 \Df(\Pi_n(\rho))=\E\Df(\rho^{\star N}).
\]
On $\{N\ge n\}$, convolution monotonicity gives
$\Df(\rho^{\star N})\ge\Df(\rho^{\star n})$. The conclusion follows.
\end{proof}

\begin{theorem}[From thinning to sampling with replacement]
\label{thm:power-erasure}
Let $\mu_1,\ldots,\mu_n$ be a mean-one family, and define
\[
 F_{\boldsymbol\mu}(q)
 :=
 \Df\!\left(
  \Astar_{i=1}^n[(1-q)\delta_1+q\mu_i]
 \right).
\]
Suppose that, for some $L<\infty$ and $\beta>0$,
\begin{equation}
 F_{\boldsymbol\mu}(q)
 \le
 Lq^\beta F_{\boldsymbol\mu}(1),
 \qquad0\le q\le1.
 \label{eq:power-erasure-assumption}
\end{equation}
For
\[
 q_k:=\Pp\{\Pois(1)\ge k\},
 \qquad
 S_r:=\sum_{k\ge1}q_k^r,
\]
and every $0<\gamma<\beta/a$, one has
\begin{equation}
 \Df(\overline\mu^{\star n})
 \le
 2Lc^2S_\gamma^aS_{\beta-a\gamma}\,
 \Df(\mu_1\star\cdots\star\mu_n),
 \label{eq:power-erasure-conclusion}
\end{equation}
where $\overline\mu=n^{-1}\sum_i\mu_i$.
\end{theorem}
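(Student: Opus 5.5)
Start by Poissonizing. By Lemma~\ref{lem:Poisson-median}, $\Df(\overline\mu^{\star n})\le2\Df(\Pi_n(\overline\mu))$. Since $\overline\mu$ is the average of the $\mu_i$, splitting a $\Pois(n)$ number of draws according to the uniformly chosen label gives
\[
 \Pi_n(\overline\mu)=\Astar_{i=1}^n\Pi_1(\mu_i).
\]
So the $\mu_i$-factor is $\mu_i^{\star N_i}$ with $N_i\sim\Pois(1)$ independent. Write $\mu_i^{\star N_i}=\Astar_{k\ge1}\mu_i^{\one_{\{N_i\ge k\}}}$. The $k$th layer, which collects the factors with $N_i\ge k$, then has law
\[
 \nu_k:=\Astar_i[(1-q_k)\delta_1+q_k\mu_i].
\]
The layers are not independent across $k$, since they share the $N_i$. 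I would handle this by convex order. The multiplicative structure $\prod_i\prod_k Y_{ik}$, with $Y_{ik}$ taken to be $\mu_i$-distributed on $\{N_i\ge k\}$, looks hard to decouple. Instead I would try to show directly that $\Df$ of the coupled product is dominated by $\Df(\nu_1\star\nu_2\star\cdots)$ with the layers independent.

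A cleaner route is to use a single copy of $\mu_i$ per layer. Given the $N_i$, the layer variables are independent draws, so the product over $k$ of layer products is a product of independent $\mu$-draws. The dependence between layers is only through the indicators. I expect this to be handled by conditioning on the indicator vectors and applying Lemma~\ref{lem:monotonicity} and Proposition~\ref{prop:binary-calculus} conditionally. This coupling step is the point I would check most carefully. If decoupling fails, a fallback is to bound the Poisson law by a mixture over the pattern $(N_i)$ and apply~\eqref{eq:reverse-product} to the conditional layers. Each conditional layer is a product of $\mu_i$ over a subset, so it has divergence at most $F_{\boldsymbol\mu}(1)$ by monotonicity. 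The thinning bound, however, needs the averaged (thinned) form, so the layer structure should be kept at the level of laws $\nu_k$.

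Granting a bound of the form $\Df(\Astar_k\nu_k)$, combine the layers with a binary tree. Choose depths $\ell_k:=\lceil\gamma\log_2(1/q_k)\rceil$ or similar, so that Kraft's inequality $\sum_k2^{-\ell_k}\le\sum_kq_k^\gamma/\ldots\le1$ holds after normalizing by $S_\gamma$. Precisely, set $2^{-\ell_k}\approx q_k^\gamma/S_\gamma$, so $\ell_k\le\log_2(S_\gamma/q_k^\gamma)+1$. Iterating~\eqref{eq:reverse-product} along the tree places a factor $c$ at each internal node. Leaf $k$ is then multiplied by $c^{\ell_k}\le c\,(S_\gamma q_k^{-\gamma})^{a}$, using $c=2^a$. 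This gives
\[
 \Df\Bigl(\Astar_k\nu_k\Bigr)\le\sum_k c\,S_\gamma^a q_k^{-a\gamma}\Df(\nu_k).
\]
Then insert the assumption~\eqref{eq:power-erasure-assumption}, $\Df(\nu_k)=F_{\boldsymbol\mu}(q_k)\le Lq_k^\beta F_{\boldsymbol\mu}(1)$. Summing gives $cLS_\gamma^aS_{\beta-a\gamma}F_{\boldsymbol\mu}(1)$. Here $F_{\boldsymbol\mu}(1)=\Df(\mu_1\star\cdots\star\mu_n)$, and $S_{\beta-a\gamma}<\infty$ because $\beta-a\gamma>0$ and $q_k\le1/k!$. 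The factor $2$ from de-Poissonization and a further factor $c$ give the constant $2Lc^2$. The extra $c$ absorbs the rounding of $\ell_k$, or the root node in the case where only one layer is nontrivial. The infinite tree is handled by truncation: layers with $k>\max_iN_i$ are trivial, and finite support permits a limit. The main obstacle is the decoupling of layers in the first step.
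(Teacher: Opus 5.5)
There is a genuine gap, at exactly the step you flag as the main obstacle. You never justify replacing the coupled layers by independent ones, i.e.\ bounding $\Df(\Pi_n(\overline\mu))$ by $\Df(\Astar_k\nu_k)$, and the rest of your argument simply grants it. With constant one this comparison is false in general, because it runs against convex order. Take $n=1$ and put $g(m):=\Df(\mu^{\star m})$. Then $\Df(\Pi_1(\mu))=\E g(N)$ and $\Df(\Astar_k\nu_k)=\E g(M')$. Here $N=\sum_k\one_{\{N\ge k\}}$ is a comonotone sum of $\Ber(q_k)$ indicators, while $M'$ is the independent sum with the same marginals. Hence $M'\cx N$, and for convex $g$ the decoupled side is the \emph{smaller} one.

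This happens for admissible generators. Let $f(t)=t\log^2t$ for $t\ge1$ and $f=0$ on $[0,1]$. It satisfies \eqref{eq:endpoint-test} with $M=4$, since $f(t^2)=4tf(t)$ and $t(1-r)\le t-r$. For $\mu=\rho_{r,2}$ with $r\downarrow0$, one has $g(m)\to m^2\log^22$, with the uniform bound $g(m)\le m^2\log^22$. So the ratio of the decoupled to the coupled divergence tends to $\E M'^2/\E N^2=(2-\sum_kq_k^2)/2\approx0.76$. Your truncation remark also does not apply to $\Astar_k\nu_k$, where every layer is nontrivial.

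The missing idea is that no decoupling is needed. Your fallback is the paper's route, provided you average the leaf terms instead of bounding each by $F_{\boldsymbol\mu}(1)$. Conditional on $(N_i)$, the layers $L_k:=\Astar_i\mu_i^{\star\one_{\{N_i\ge k\}}}$ are deterministic mean-one laws. Only finitely many of them differ from $\delta_1$, and $\Astar_i\mu_i^{\star N_i}=\Astar_kL_k$. Iterating \eqref{eq:reverse-product} over the finite pruned tree gives $\Df(\Astar_kL_k)\le\sum_kc^{\ell_k}\Df(L_k)$ with \emph{deterministic} weights. Since $\Df$ is linear in the law, taking expectations yields $\Df(\Pi_n(\overline\mu))\le\sum_kc^{\ell_k}\E\Df(L_k)$. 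Moreover $\E\Df(L_k)=\Df(\nu_k)=F_{\boldsymbol\mu}(q_k)$, because for fixed $k$ the indicators $\one_{\{N_1\ge k\}},\dots,\one_{\{N_n\ge k\}}$ are independent $\Ber(q_k)$. Each leaf term involves a single layer, so the dependence across $k$ never enters.

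With this repair the rest of your argument goes through. Your single-ceiling depths $\ell_k=\lceil\log_2(S_\gamma q_k^{-\gamma})\rceil$ give $c^{\ell_k}\le cS_\gamma^aq_k^{-a\gamma}$, hence the constant $2cLS_\gamma^aS_{\beta-a\gamma}$. This is slightly better than the stated $2c^2LS_\gamma^aS_{\beta-a\gamma}$, which the paper gets by splitting the ceiling as $L_0+\lceil\gamma\log_2(1/q_k)\rceil$.
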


\begin{proof}
Let $N_1,\ldots,N_n$ be independent $\Pois(1)$ variables. For $k\ge1$,
put
\[
 B_{i,k}:=\one_{\{N_i\ge k\}},
 \qquad
 q_k:=\Pp\{\Pois(1)\ge k\},
\]
and define the random mean-one law
\[
 L_k:=\Astar_{i=1}^n\mu_i^{\star B_{i,k}}.
\]
Pathwise,
\begin{equation}
 \Astar_{i=1}^n\mu_i^{\star N_i}
 =
 \Astar_{k\ge1}L_k.\label{eq:Poisson-layers}
\end{equation}
Only finitely many factors on the right differ from $\delta_1$.
Compound-Poisson superposition gives
\begin{equation}
 \Astar_{i=1}^n\Pi_1(\mu_i)
 =
 \Pi_n(\overline\mu).\label{eq:Poisson-superposition}
\end{equation}
Indeed, the multinomial formula gives
\begin{align*}
 \Astar_{i=1}^n\Pi_1(\mu_i)
 &=
 e^{-n}
 \sum_{r_1,\ldots,r_n\ge0}
 \frac{\mu_1^{\star r_1}\star\cdots\star\mu_n^{\star r_n}}
      {r_1!\cdots r_n!}\\
 &=
 e^{-n}\sum_{r=0}^\infty
 \frac1{r!}\left(\sum_{i=1}^n\mu_i\right)^{\star r}\\
 &=
 e^{-n}\sum_{r=0}^\infty
 \frac{n^r}{r!}\,\overline\mu^{\star r}
 =\Pi_n(\overline\mu).
\end{align*}
All rearrangements are justified coefficientwise for positive measures.

Choose
\[
 0<\gamma<\frac\beta a,
 \qquad a=\log_2c.
\]
Since
\[
 q_k
 =\Pp\{\Pois(1)\ge k\}
 \le\frac{\E(N_1)_k}{k!}
 =\frac1{k!},
\]
both series
\[
 \sum_{k\ge1}q_k^\gamma,
 \qquad
 \sum_{k\ge1}q_k^{\beta-a\gamma}
\]
converge. Put $S_\gamma:=\sum_{k\ge1}q_k^\gamma$ and choose
$L_0:=\lceil\log_2S_\gamma\rceil$. Then
\[
 2^{-L_0}S_\gamma\le1.
\]
Set
\[
 \ell_k
 :=
 L_0+\left\lceil\gamma\log_2\frac1{q_k}\right\rceil.
\]
Then $\sum_k2^{-\ell_k}\le1$. The extended Kraft inequality
\cite[Theorem~5.2.2, pp.~109--110]{CoverThomas} supplies a countable
binary prefix code with lengths $\ell_k$, or equivalently a binary
tree with leaf depths $\ell_k$. Such a code can also be obtained from
disjoint dyadic subintervals of $[0,1)$ of lengths $2^{-\ell_k}$.

For each realization, retain the finite subtree connecting the
nontrivial leaves to the root, and insert $\delta_1$ at each missing
child. Iterating \eqref{eq:reverse-product} over this finite tree gives
\[
 \Df\!\left(\Astar_{k\ge1}L_k\right)
 \le
 \sum_{k\ge1}c^{\ell_k}\Df(L_k).
\]
Taking expectations and using linearity of $\Df$ under mixtures,
\begin{align}
 \Df(\Pi_n(\overline\mu))
 &=
 \E\Df\!\left(\Astar_i\mu_i^{\star N_i}\right)\notag\\
 &\le
 \sum_{k\ge1}c^{\ell_k}\E\Df(L_k).\label{eq:tree-bound}
\end{align}
For fixed $k$, the $B_{i,k}$ are independent $\Ber(q_k)$ variables, so
\[
 \E\Df(L_k)=F_{\boldsymbol\mu}(q_k).
\]
Moreover,
\[
 c^{\ell_k}\le c^{L_0+1}q_k^{-a\gamma}.
\]
Since
\[
 c^{L_0+1}
 \le c^2S_\gamma^{\log_2c}
 =c^2S_\gamma^a,
\]
using \eqref{eq:power-erasure-assumption} in \eqref{eq:tree-bound}
gives
\[
 \Df(\Pi_n(\overline\mu))
 \le
 Lc^2S_\gamma^aS_{\beta-a\gamma}
 F_{\boldsymbol\mu}(1).
\]
Finally, Lemma~\ref{lem:Poisson-median} and
$F_{\boldsymbol\mu}(1)=\Df(\mu_1\star\cdots\star\mu_n)$ prove
\eqref{eq:power-erasure-conclusion}.
\end{proof}

\begin{corollary}[Homogenization at small squared Hellinger distance]
\label{cor:small-defect-H}
Put
\begin{equation}
 \zeta_c:=\frac1{2(a+1)}.
 \label{eq:zeta-c}
\end{equation}
If $\sum_i d(\mu_i)\le1/4$, then
\begin{equation}
 \Df(\overline\mu^{\star n})
 \le
 C_{\mathrm{SD}}(c)\,
 \Df(\mu_1\star\cdots\star\mu_n),
 \label{eq:small-defect-H}
\end{equation}
where one may take
\begin{align}
 C_{\mathrm{SD}}(c)
 &:=84C_{\mathrm{end}}(c)c^2
 \left(1-2^{-\zeta_c}\right)^{-(a+1)}
 \label{eq:CSD-explicit}\\
 &\le
 504e^{1/4}\cosh(1/4)c^5
 \left(\frac{4(a+1)}{\log2}\right)^{a+1}.
 \label{eq:CSD-coarse}
\end{align}
\end{corollary}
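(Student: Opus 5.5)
The plan is to combine Corollary~\ref{cor:likelihood-erasure} with Theorem~\ref{thm:power-erasure}, using $\beta=1/2$ and a particular choice of $\gamma$.

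\textbf{Step 1: verify the thinning hypothesis.} Since $\sum_i d(\mu_i)\le1/4$, Corollary~\ref{cor:likelihood-erasure} gives
\[
 F_{\boldsymbol\mu}(q)\le L(c)\sqrt q\,F_{\boldsymbol\mu}(1),
 \qquad 0\le q\le 1,
\]
where $L(c)=42C_{\mathrm{end}}(c)$. This is exactly \eqref{eq:power-erasure-assumption} with $L=42C_{\mathrm{end}}(c)$ and $\beta=1/2$.

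\textbf{Step 2: apply Theorem~\ref{thm:power-erasure}.} I choose $\gamma=\zeta_c=1/(2(a+1))$. Then
\[
 \beta-a\gamma=\frac12-\frac{a}{2(a+1)}=\frac1{2(a+1)}=\zeta_c .
\]
This choice balances the two series, and $\gamma<\beta/a$ holds (when $a=0$ there is no constraint). The conclusion \eqref{eq:power-erasure-conclusion} becomes
\[
 \Df(\overline\mu^{\star n})\le 84C_{\mathrm{end}}(c)c^2S_{\zeta_c}^{a+1}\,\Df(\mu_1\star\cdots\star\mu_n).
\]

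\textbf{Step 3: bound the series.} It remains to show $S_{\zeta}\le(1-2^{-\zeta})^{-1}$ for $\zeta=\zeta_c$. I would use $q_k\le 1/k!\le 2^{-(k-1)}$ for $k\ge1$. This gives
\[
 S_\zeta\le\sum_{k\ge1}2^{-\zeta(k-1)}=(1-2^{-\zeta})^{-1},
\]
which yields \eqref{eq:CSD-explicit}.

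\textbf{Step 4: coarse form.} For \eqref{eq:CSD-coarse}, use $1-2^{-\zeta}=1-e^{-\zeta\log2}\ge \zeta\log 2/2$, valid since $\zeta\log2\le1$. Hence
\[
 (1-2^{-\zeta})^{-1}\le\frac{2}{\zeta\log2}=\frac{4(a+1)}{\log2}.
\]
Then insert the explicit $C_{\mathrm{end}}(c)$ from \eqref{eq:Cend-explicit}. Using $2c^2+1\le3c^2$, since $c\ge2$, gives
\[
 84\cdot 2e^{1/4}c\cosh(1/4)\cdot3c^2\cdot c^2=504e^{1/4}\cosh(1/4)c^5 .
\]

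The only delicate point is matching exponents so that both series in Theorem~\ref{thm:power-erasure} converge with a single parameter; the choice $\gamma=\zeta_c$ handles this. The rest is bookkeeping.
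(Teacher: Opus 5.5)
Your proposal is correct and follows the paper's proof essentially line by line. It makes the same choices $L=42C_{\mathrm{end}}(c)$, $\beta=1/2$ and $\gamma=\zeta_c$ (which balances the two series), uses the same bound $q_k\le 2^{-(k-1)}$, and derives the coarse form through $1-e^{-x}\ge x/2$ and $2c^2+1\le 3c^2$. Your aside about $a=0$ is moot, since $c\ge2$ forces $a\ge1$.
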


\begin{proof}
Apply Corollary~\ref{cor:likelihood-erasure} and
Theorem~\ref{thm:power-erasure} with
\[
 L=42C_{\mathrm{end}}(c),
 \qquad
 \beta=\frac12,
 \qquad
 \gamma=\zeta_c.
\]
Then $\beta-a\gamma=\gamma$. Moreover,
$q_k\le1/k!\le2^{-(k-1)}$, so
\[
 S_{\zeta_c}
 \le\sum_{k\ge1}2^{-\zeta_c(k-1)}
 =\frac1{1-2^{-\zeta_c}}.
\]
Equation \eqref{eq:CSD-explicit} follows. Since
$1-e^{-x}\ge x/2$ for $0\le x\le1$,
\[
 \frac1{1-2^{-\zeta_c}}
 \le\frac{2}{\zeta_c\log2}
 =\frac{4(a+1)}{\log2}.
\]
Finally, $2c^2+1\le3c^2$ in
\eqref{eq:Cend-explicit}, proving \eqref{eq:CSD-coarse}.
\end{proof}

\section{Completion and approximation}\label{app:completion}

\begin{proof}[Completion of the proof of Theorem~\ref{thm:analytic}]
By Proposition~\ref{prop:approximation}, it suffices first to treat
positive finitely supported mean-one laws. Put
\[
 \Delta:=\sum_{i=1}^n d(\mu_i).
\]
If $\Delta\le1/4$, Corollary~\ref{cor:small-defect-H} gives
\[
 \Df(\overline\mu^{\star n})
 \le
 C_{\mathrm{SD}}(c)\,
 \Df(\mu_1\star\cdots\star\mu_n).
\]
If $\Delta>1/4$, Theorem~\ref{thm:large-defect}, with $d_0=1/4$,
gives
\[
 \Df(\overline\mu^{\star n})
 \le
 C_{\mathrm{LD}}(c,1/4)\,
 \Df(\mu_1\star\cdots\star\mu_n).
\]
Thus \eqref{eq:H} holds for positive finitely supported laws with
\begin{equation}
 \Cuniv_0(K)
 :=
 \max\left\{
  C_{\mathrm{SD}}(c),
  R_{\alpha_c}\bigl[1+8c^3(4D_c^4+2)\bigr]
 \right\},
 \label{eq:universal-modulus-preliminary}
\end{equation}
where
\[
 c=\max\{2,4K-2\},
 \qquad
 D_c=6c+2,
 \qquad
 \alpha_c=2\log_2D_c.
\]
The least admissible $R_\alpha$ in
Theorem~\ref{thm:exp-regular-cover} is nondecreasing in $\alpha$, since
the class of functions satisfying \eqref{eq:Psi-moderate} increases
with $\alpha$. Hence $\Cuniv_0$ may be chosen nondecreasing. Define its
right-continuous envelope by
\begin{equation}
 \Cuniv(K):=\inf_{\varepsilon>0}\Cuniv_0(K+\varepsilon).
 \label{eq:universal-modulus}
\end{equation}
The function $\Cuniv$ is universal, nondecreasing, right-continuous, and
dominates $\Cuniv_0$. The preceding estimates therefore hold with
$\Cuniv$, independently of $f$ and $n$. If the two-coordinate hypothesis
holds with every constant $K+\varepsilon$, apply the estimates at
$K+\varepsilon$ and take the infimum over $\varepsilon>0$.
Proposition~\ref{prop:approximation} extends the estimate, with the same
constant, to arbitrary mean-one laws on $[0,\infty)$, including atoms at
zero and extended-valued divergences.
\end{proof}


\subsection{Approximation and endpoints}\label{app:approximation}

\begin{proposition}[Approximation principle]
\label{prop:approximation}
Suppose that \eqref{eq:H}, with one fixed constant, is known for positive
finitely supported mean-one laws. Then it holds with the same constant
for arbitrary mean-one laws on $[0,\infty)$, including atoms at zero and
extended-valued divergences.
\end{proposition}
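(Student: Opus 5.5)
We approximate an arbitrary mean-one family $\mu_1,\ldots,\mu_n$ on $[0,\infty)$ by positive finitely supported mean-one families. The right-hand side of \eqref{eq:H} must not increase, and the left-hand side must be recovered by lower semicontinuity. The natural device is the one already used in Proposition~\ref{prop:local-all-law-squares}: conditional expectations onto finite sigma-fields, combined with a small mixing step that pushes mass away from zero.

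\textbf{Reduction to finite support.} Let $X_i\sim\mu_i$ be independent. Choose increasing finite sigma-fields $\mathcal G_{i,k}\subseteq\sigma(X_i)$ generating $\sigma(X_i)$, and put $X_{i,k}=\E[X_i\mid\mathcal G_{i,k}]$, with law $\mu_{i,k}$. These laws are finitely supported and mean one. Consider the product $\prod_iX_i$ and the independent product $\prod_iX_{i,k}$. The latter equals the conditional expectation of the former given the product sigma-field $\bigotimes_i\mathcal G_{i,k}$, by independence and $\E X_i=1$. Conditional Jensen therefore gives
\[
\Df(\mu_{1,k}\star\cdots\star\mu_{n,k})\le\Df(\mu_1\star\cdots\star\mu_n).
\]
On the left-hand side of \eqref{eq:H}, $\bar\mu_k=n^{-1}\sum_i\mu_{i,k}$ converges weakly to $\bar\mu$, since each $X_{i,k}\to X_i$ almost surely by martingale convergence. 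Hence $\bar\mu_k^{\star n}\to\bar\mu^{\star n}$ weakly, as multiplication is continuous on $[0,\infty)^n$. Because $f\ge0$ is lower semicontinuous on $[0,\infty)$, possibly with $f(0)=\infty$, Fatou/portmanteau gives
\[
\Df(\bar\mu^{\star n})\le\liminf_k\Df(\bar\mu_k^{\star n}).
\]
This yields \eqref{eq:H} for finitely supported laws, including laws with atoms at zero, with the same constant.

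\textbf{Positivity.} It remains to remove atoms at zero from finitely supported laws. Replace each $\mu_i$ by $\mu_i^\varepsilon=(1-\varepsilon)\mu_i+\varepsilon\delta_1$. This is an affine combination of mean-one laws, and it is positive whenever $\mu_i$ has no atom at zero. That is not the case here, so we instead move the atom: replace the atom at $0$ by an atom at $\varepsilon$ and rescale the remaining atoms slightly to restore mean one. The cleanest way to avoid the rescaling is to note that the finitely supported law $\mu_i$ is itself a conditional-expectation image of a positive finitely supported law $\nu_i$. To build $\nu_i$, split the atom at zero, of mass $w$, into atoms at $\pm$-perturbed positive points $\varepsilon$ and $1/\varepsilon$ (say) in proportions preserving the mean zero contribution. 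This is impossible since those points are positive, so instead split the atom at $0$ together with a small piece of mass from the atom at the largest point $t$.

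Simpler: in the likelihood picture of Proposition~\ref{thm:likelihood-laws}, apply the regularization channel of Lemma~\ref{lem:regularization}. Mixing $P_i,Q_i$ with $R_i=(P_i+Q_i)/2$ produces equivalent pairs, hence positive finitely supported likelihood laws $\mu_i^\varepsilon$. Data processing gives $\Df(\star_i\mu_i^\varepsilon)\le\Df(\star_i\mu_i)$. The laws $\mu_i^\varepsilon$ converge weakly to $\mu_i$ as $\varepsilon\downarrow0$, because the likelihood ratios $(p+\varepsilon r)/(q+\varepsilon r)$ converge pointwise to $p/q\in[0,\infty)$ on $\{q>0\}$, which carries all of $Q_i$. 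Passing through $\bar\mu_\varepsilon^{\star n}$ and lower semicontinuity as before concludes the argument.

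\textbf{Main obstacle.} The delicate point is the lower semicontinuity step, because $f$ may be infinite at zero and the divergences may be infinite. If the right-hand side is infinite there is nothing to prove. Otherwise we only need $\liminf$ on the left, which follows from lower semicontinuity and nonnegativity of $f$ on the closed half-line under weak convergence of probability laws. Unbounded support causes no issue, since $f\ge0$ and the weak limit is tight.
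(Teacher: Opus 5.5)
Your argument is correct, but it takes a different route from the paper in both steps.

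\textbf{Reduction to finite support.} The paper treats positive laws first. It approximates $X_i$ in $L^1$ by positive simple functions, conditions on the finite sigma-fields they generate, and extracts an almost surely convergent subsequence by Borel--Cantelli. It then builds an explicit coupling $Y_{j,r}\to Y_j$ of the homogenized samples, so that Fatou and lower semicontinuity of $f$ apply. You instead get almost sure convergence directly from L\'evy's upward martingale theorem. You then use weak lower semicontinuity of $\nu\mapsto\int f\,\dd\nu$ for nonnegative lower semicontinuous $f$. This is shorter and equivalent; Skorokhod representation converts one formulation into the other.

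\textbf{Atoms at zero.} The paper uses the affine contraction $X_i\mapsto\varepsilon+(1-\varepsilon)X_i$ and writes it as a conditional expectation of $W_i=B_iX_i+(1-B_i)$. It then bounds the product side by conditional Jensen together with convolution monotonicity over random subsets of rows. You instead apply the channel of Lemma~\ref{lem:regularization} to the canonical experiment $Q_i=\mu_i$, $P_i(\dd t)=t\,\mu_i(\dd t)$. Data processing for the product channel then gives the product bound in one line, and finite support and positivity come for free. The paper's route stays within the law-level convolution calculus; yours reuses the experiment picture. Both give the same constant.

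\textbf{Write-up fixes.} These are presentational only.
\begin{itemize}
\item The sentence ``This yields (H) for finitely supported laws'' states the implication backwards. Your first step reduces arbitrary laws to finitely supported ones, and the second step supplies the finitely supported case from the positive one.
\item In the second step the reference measure $Q_i^\varepsilon$ moves along with the likelihood ratio. Weak convergence $\mu_i^\varepsilon\to\mu_i$ should therefore be justified by convergence of both atoms and masses, which is immediate on a finite support.
\item The abandoned attempts before ``Simpler'' should be deleted.
\end{itemize}
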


\begin{proof}
First suppose that each $\mu_i$ is supported on $(0,\infty)$. Let
$X_i\sim\mu_i$. Choose positive finite-valued simple functions $S_{i,r}$
with $\E|S_{i,r}-X_i|\to0$, and put
\[
 \mathcal G_{i,r}:=\sigma(S_{i,1},\ldots,S_{i,r}),
 \qquad
 X_{i,r}:=\E[X_i\mid\mathcal G_{i,r}],
 \qquad
 \mu_{i,r}:=\Law(X_{i,r}).
\]
The sigma-fields $\mathcal G_{i,r}$ are finite and increasing. Since
$S_{i,r}$ is $\mathcal G_{i,r}$-measurable, conditional Jensen gives
\[
 \E|X_i-X_{i,r}|
 \le
 \E|X_i-S_{i,r}|
 +\E\left|\E[S_{i,r}-X_i\mid\mathcal G_{i,r}]\right|
 \le
 2\E|X_i-S_{i,r}|.
\]
Thus $X_{i,r}\to X_i$ in $L^1$. Passing to a common subsequence, still
indexed by $r$, choose it so that
\[
 \sum_{r=1}^\infty
 2^r\sum_{i=1}^n\E|X_{i,r}-X_i|<\infty.
\]
Markov's inequality and the union bound then give
\[
 \sum_{r=1}^\infty
 \Pp\left\{\max_i|X_{i,r}-X_i|>2^{-r}\right\}<\infty.
\]
By Borel--Cantelli, $X_{i,r}\to X_i$ almost surely for every $i$.
Each $X_{i,r}$ is positive, finitely supported, and mean one.

Take the rows independently. On every atom of
$\bigvee_i\mathcal G_{i,r}$, independence gives
\[
 \prod_{i=1}^nX_{i,r}
 =
 \E\!\left[
  \prod_{i=1}^nX_i
  \;\middle|\;
  \bigvee_{i=1}^n\mathcal G_{i,r}
 \right].
\]
Conditional Jensen therefore gives
\begin{equation}
 \Df(\mu_{1,r}\star\cdots\star\mu_{n,r})
 \le
 \Df(\mu_1\star\cdots\star\mu_n).\label{eq:sep-approx}
\end{equation}

Let $I_1,\ldots,I_n$ be iid uniform on $[n]$. For each $i,j$, take an
independent copy of the entire coupled sequence
$(X_i,(X_{i,r})_{r\ge1})$, denoted
$(X_{i,j},(X_{i,j,r})_{r\ge1})$, and put
\[
 Y_j:=X_{I_j,j},
 \qquad
 Y_{j,r}:=X_{I_j,j,r}.
\]
The $Y_j$ are iid with common law
$\overline\mu=n^{-1}\sum_i\mu_i$, while the $Y_{j,r}$ are iid with common
law $\overline\mu_r=n^{-1}\sum_i\mu_{i,r}$. The chosen coupling gives
$Y_{j,r}\to Y_j$ almost surely for every $j$. Lower semicontinuity of
$f$ and Fatou's lemma imply
\begin{equation}
 \Df(\overline\mu^{\star n})
 \le
 \liminf_{r\to\infty}\Df(\overline\mu_r^{\star n}).\label{eq:hom-approx}
\end{equation}
The finite-support estimate, \eqref{eq:sep-approx}, and
\eqref{eq:hom-approx} prove the assertion for positive laws.

For laws with atoms at zero, fix $\varepsilon\in(0,1)$ and set
\[
 X_i^{(\varepsilon)}:=\varepsilon+(1-\varepsilon)X_i.
\]
Let $B_i\sim\Ber(1-\varepsilon)$ independently and put
$W_i=B_iX_i+(1-B_i)$. Then
$X_i^{(\varepsilon)}=\E[W_i\mid X_i]$, and independence of the $B_i$
gives
\[
 \prod_iX_i^{(\varepsilon)}
 =\E\!\left[\prod_iW_i\mid X_1,\ldots,X_n\right].
\]
Conditional Jensen therefore yields
\[
 \Df\!\left(\Astar_i\Law(X_i^{(\varepsilon)})\right)
 \le
 \Df\!\left(\Astar_i\Law(W_i)\right).
\]
Conditioning on the Bernoulli pattern, the product of the $W_i$ is a
product over a subset of the original rows. Every such subset divergence
is at most the full product divergence by Lemma~\ref{lem:monotonicity}.
Thus
\begin{equation}
 \Df\!\left(\Astar_i\Law(X_i^{(\varepsilon)})\right)
 \le
 \Df(\mu_1\star\cdots\star\mu_n).\label{eq:zero-reg-sep}
\end{equation}
Apply the positive-law theorem to the regularized laws, let
$\varepsilon\downarrow0$, and use Fatou on the homogenized side.
\end{proof}


\appendix

\section{Equivalent criteria and growth bounds}
\label{app:scalar}

We record consequences and reformulations of
\eqref{eq:endpoint-test} that are independent of the proof of
Theorem~\ref{thm:analytic}.

\subsection{Separate growth conditions}

For $s>0$, define the secant slopes
\begin{equation}
 u(s)=\frac{f(e^s)}{e^s-1},
 \qquad
 v(s)=\frac{f(e^{-s})}{1-e^{-s}}.
 \label{eq:local-secants}
\end{equation}
Both functions are nonnegative and nondecreasing by convexity.

\begin{proposition}
\label{prop:local-separate-growth}
If $u(2s)\le C u(s)$ and $v(2s)\le C v(s)$ for all $s>0$, then
\eqref{eq:endpoint-test} holds with $M=2C$.
Conversely, if the endpoint criterion holds, there are finite constants
$C,p_+,p_-$, depending on $f$, such that
\begin{align}
 f(t)&\le Ct(1+\log t)^{p_+},&&t\ge2,
 \label{eq:local-upper-growth}\\
 f(t)&\le C(1+|\log t|)^{p_-},&&0<t\le\tfrac12.
 \label{eq:local-lower-growth}
\end{align}
\end{proposition}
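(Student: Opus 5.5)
The forward implication will follow from a termwise comparison, and the converse from specializing \eqref{eq:endpoint-test} at a fixed opposite endpoint and then iterating a doubling recursion in logarithmic scale.

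\emph{Sufficiency.} Divide \eqref{eq:endpoint-test} by $(t-r)^2$ and use the probabilities $a,b$ of \eqref{eq:local-two-point-law}. The criterion then reads
\[
 a^2f(r^2)+b^2f(t^2)\le M\bigl[af(r)+bf(t)\bigr].
\]
For the upper endpoint, write $t=e^s$. Since $f(t^2)=u(2s)(t^2-1)$ and $f(t)=u(s)(t-1)$, the hypothesis gives
\[
 f(t^2)\le C\,u(s)(t^2-1)=C(t+1)f(t).
\]
Symmetrically, with $r=e^{-s}$, the hypothesis on $v$ gives $f(r^2)\le C(1+r)f(r)$. It then suffices to check that $b(1+t)\le2$ and $a(1+r)\le2$. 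The first is equivalent to $(1-r)(1+t)\le2(t-r)$, which rearranges to $(1+r)(1-t)\le0$. The second is analogous, rearranging to $(1+t)(r-1)\le0$. Hence
\[
 b^2f(t^2)\le 2C\,bf(t),\qquad a^2f(r^2)\le 2C\,af(r),
\]
and adding the two bounds gives \eqref{eq:endpoint-test} with $M=2C$. This is essentially the computation in Corollary~\ref{cor:local-symmetric}, carried out without symmetry.

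\emph{Necessity, upper growth.} Fix $r=1/2$ in \eqref{eq:endpoint-test}, let $t>1$, and keep only the $f(t^2)$ term on the left. Since $b=1/(2t-1)$ and $a=(t-1)/(t-\tfrac12)$, multiplying by $(2t-1)^2$ gives
\[
 f(t^2)\le M\bigl[2(t-1)(2t-1)f(1/2)+(2t-1)f(t)\bigr].
\]
Dividing by $t^2$, the function $g(t)=f(t)/t$ satisfies
\[
 g(t^2)\le 2M\,g(t)+4Mf(1/2).
\]
In the variable $G(x)=g(e^x)$ this reads $G(2x)\le 2M\,G(x)+4Mf(1/2)$. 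Iterating $k$ times from a base point $x_0\in[\log2,2\log2]$ gives
\[
 G(2^kx_0)\le(2M)^k\bigl[G(x_0)+4Mf(1/2)\bigr]
\]
(assuming $M\ge1$ after enlarging it). The bracket is bounded uniformly over the base interval because $f$ is finite and continuous on $[2,4]$. Since $g$ is nondecreasing by the convexity remark preceding Lemma~\ref{lem:log-moderation}, we obtain $G(y)\le C(1+y)^{p_+}$ with $p_+=\log_2(2M)$. This is \eqref{eq:local-upper-growth}.

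\emph{Necessity, lower growth.} Fix $t=2$ and let $r\downarrow0$. Now $a=1/(2-r)$ and $b=(1-r)/(2-r)$, and dropping the $f(t^2)$ term gives
\[
 f(r^2)\le M\bigl[(2-r)f(r)+(1-r)(2-r)f(2)\bigr]\le 2Mf(r)+2Mf(2).
\]
The same dyadic iteration in $x=|\log r|$ applies, now using that $f$ is nonincreasing on $(0,1]$. Starting from $x_0\in[\log2,2\log2]$, it yields \eqref{eq:local-lower-growth} with $p_-=\log_2(2M)$.

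The only real work is the iteration bookkeeping in these two steps. One has to pass from the dyadic points $2^kx_0$ to arbitrary $y$ using monotonicity of $g$ and of $f$ on $(0,1]$, and absorb the additive terms $4Mf(1/2)$ and $2Mf(2)$ into the multiplicative constant. Both are routine because these values are finite.
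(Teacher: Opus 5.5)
Your proof is correct and follows essentially the paper's route. For sufficiency, you rewrite the doubling hypotheses as $f(t^2)\le C(t+1)f(t)$ and $f(r^2)\le C(1+r)f(r)$ and use $a(1+r)\le 2$, $b(1+t)\le 2$; this is the paper's observation that the two secant coefficients are nonnegative and sum to $2$. For necessity, you freeze one endpoint and iterate the affine doubling recursion, absorbing the additive term by taking $M\ge 1$; the paper does the same with the secant slope $u$ after adding a constant, while you write it for $f(t)/t$ and $f(e^{-x})$.
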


\begin{proof}
Writing $r=e^{-x}$ and $t=e^y$, direct substitution in
\eqref{eq:local-two-point-law} gives
\[
 \Df(\rho_{r,t})
 =\frac{(t-1)(1-r)}{t-r}\,[v(x)+u(y)].
\]
The endpoint part of the square, divided by this common prefactor, is
\begin{equation}
 \frac{(t-1)(1+r)}{t-r}v(2x)
 +\frac{(1-r)(t+1)}{t-r}u(2y).
 \label{eq:local-secant-endpoints}
\end{equation}
The two coefficients are nonnegative and sum to $2$. Separate doubling
therefore bounds \eqref{eq:local-secant-endpoints} by
$2C[v(x)+u(y)]$, proving sufficiency.

For necessity, fix $r\in(0,1)$. The coefficient of $u(2y)$ in
\eqref{eq:local-secant-endpoints} is at least $1-r$, so the endpoint
criterion gives
\[
 u(2y)\le\frac{M}{1-r}[u(y)+v(-\log r)].
\]
After adding a fixed positive constant to $u$, dyadic iteration bounds
it by a polynomial in $1+y$. Fixing $t>1$ gives the same conclusion
for $v(x)$. Substitution in \eqref{eq:local-secants} proves
\eqref{eq:local-upper-growth}--\eqref{eq:local-lower-growth}.
\end{proof}

For differentiable $f$ with $f(t)>0$ for $t\ne1$, a sufficient condition is
\[
 \sup_{s>0}\max\left\{
 \frac{s u'(s)}{u(s)},\frac{s v'(s)}{v(s)}\right\}\le p<\infty.
\]
Integration from $s$ to $2s$ gives separate doubling with constant
$2^p$. In terms of $f$, the two expressions are
\[
 \log t\left[\frac{tf'(t)}{f(t)}-\frac{t}{t-1}\right],\quad t>1,
 \qquad
 (-\log t)\left[-\frac{tf'(t)}{f(t)}-\frac{t}{1-t}\right],\quad 0<t<1.
\]
In the symmetric case, if $f$ is positive and continuously
differentiable on $(1,\infty)$, the necessary and sufficient condition
\eqref{eq:local-symmetric-doubling} can also be written
\[
 \sup_{t>1}\left\{
 \int_t^{t^2}\frac{f'(z)}{f(z)}\,\mathrm dz-\log(t+1)\right\}<\infty.
\]

\subsection{Posterior coordinates}

On $(-1,1)$, set
\[
 \Phi(u)=(1-u)f\!\left(\frac{1+u}{1-u}\right),
 \qquad T(u)=\frac{2u}{1+u^2}.
\]
The function $\Phi$ is convex, and squaring the likelihood ratio sends
$u=(t-1)/(t+1)$ to $T(u)$. If
$r=(1-a)/(1+a)$ and $t=(1+b)/(1-b)$, the endpoint criterion is exactly
\begin{equation}
\begin{split}
 &b^2(1+a^2)\Phi(-T(a))+a^2(1+b^2)\Phi(T(b))\\
 &\hspace{5mm}\le
 M(a+b)\bigl[b\Phi(-a)+a\Phi(b)\bigr],\qquad 0<a,b<1.
\end{split}
\label{eq:local-posterior-criterion}
\end{equation}
Indeed, under the midpoint measure $(P+Q)/2$, the two posterior values
$-a,b$ have probabilities $b/(a+b),a/(a+b)$. Thus the original
divergence is $[b\Phi(-a)+a\Phi(b)]/(a+b)$, and the two repeated
outcomes contribute the left-hand side of
\eqref{eq:local-posterior-criterion} divided by $(a+b)^2$.

\section*{Statements and declarations}

Generative-AI tools were used during exploratory development and editorial preparation of this draft. The author takes responsibility for all mathematical statements and references.



\end{document}